\newcommand{\sm}{{\raise0.3ex\hbox{$\scriptstyle \setminus$}}}
\newcommand{\op}{\operatorname{op}}
\def\multiset#1#2{\ensuremath{\left(\kern-.2em\left(\genfrac{}{}{0pt}{}{#1}{#2}\right)\kern-.2em\right)}}
\numberwithin{equation}{section}
\newcommand{\R}{\mathbb{R}}
\newcommand{\C}{\mathbb{C}}
\DeclareMathOperator{\tr}{tr}
\newcommand{\rank}{\mathop{\operator@font rank}}
\newcommand{\e}{{\varepsilon}}
\newcommand{\vertiii}[1]{{\left\vert\kern-0.25ex\left\vert\kern-0.25ex\left\vert #1
    \right\vert\kern-0.25ex\right\vert\kern-0.25ex\right\vert}}
\newcommand{\B}[1]{\textbf{#1}}
\newcommand{\N}{\mathbb{N}} 
\newcommand{\E}{\mathbf{E}}
\newcommand{\PP}{\mathbf{P}} 
\newtheorem{thm}{Theorem}[section]
\newtheorem{lem}[thm]{Lemma}
\newtheorem{prop}[thm]{Proposition}
\newtheorem{cor}[thm]{Corollary}
\theoremstyle{definition}
\newtheorem{assumption}[thm]{Assumption}
\newtheorem{defn}[thm]{Definition}
\theoremstyle{remark}
\newtheorem{rem}[thm]{Remark}
\title{The limit of the operator norm for \\ random matrices with a variance profile\thanks{Research supported by the Hellenic Foundation for
Research and Innovation (H.F.R.I.) under the ``First Call for Research Projects to support Faculty members and Researchers and the procurement of high-cost research equipment grant, Project Number: 1034. \\ M.L. has also received funding from the European Union’s Horizon 2020 research and innovation program under the
Marie Sklodowska-Curie grant agreement No 101034255.}}
\author{Dimitris Cheliotis\thanks{National and Kapodistrian University of Athens, Department of Mathematics,
	Panepistimiopolis, Athens 15784, Greece,
  \texttt{dcheliotis@math.uoa.gr}} \\
   \And 
 \author [Michail Louvaris\thanks{Laboratoire d’informatique Gaspard Monge (LIGM / UMR 8049),
Université Gustave Eiffell,
 Marne-la-Vallée
  \texttt{michail.louvaris@univ-eiffel.fr}}}

\begin{document}
\maketitle

\begin{abstract}
 In this work we study symmetric random matrices with variance profile satisfying certain conditions.  We establish the convergence of the operator norm of these matrices to the largest element of the support of the limiting empirical spectral distribution. We prove that it is sufficient for the entries of the matrix to have finite only the $4$-th moment or the $4+\epsilon$ moment in order for the convergence to hold in probability or almost surely respectively. Our approach determines the behaviour of the operator norm for random symmetric or non-symmetric matrices whose variance profile is given by a step or a continuous function, random band matrices whose bandwidth is proportional to their dimension, random Gram matrices, triangular matrices and more.  

\end{abstract}


\section{Introduction}

The problem of understanding  the operator norm of a large random matrix with independent entries is multidisciplinary, occupying mathematicians, statisticians, physicists. On the mathematical side, tools from classical probability, geometric analysis, combinatorics, free probability and more have been used. The problem dates back to 1981, where in \cite{furedi1981eigenvalues} the convergence of the largest eigenvalue of renormalized Wigner matrices (symmetric, i.i.d. entries) to the edge of the limiting distribution was established when the entries of the matrix are bounded. Next, in \cite{bai1988necessary}, the authors gave necessary and sufficient conditions for the entries of a Wigner matrix to converge. The crucial condition was that the entries should have finite 4-th moment. Similar bounds have been given to non-symmetric matrices with i.i.d. entries. Then, the difference of the largest eigenvalue and its limit, after re-normalization, was proven to converge to the Tracy-Widow law in \cite{tracy1994level}. Later, universality results were established for sparse random matrix models, for example in \cite{huang2022edge} for random graphs and in \cite{sodin2010spectral} for random banded matrices. Moreover, sharp non-asymptotic results for a general class of matrices were established in \cite{bandeira2016sharp} and in \cite{bandeira2021matrix}.
\\  All the models mentioned above can be considered as random matrices with general variance profile, i.e., random matrices whose entries' variances can depend on the dimension of the matrix and the location of the element in the matrix. These models have also drawn a lot of attention lately, see for example \cite{cook2018non}, \cite{cook2022non}, where non-Hermitian models are considered. More specifically, assume that $A_N=(a_{i, j}^{(N)}), N\in\N^+,$ is a sequence of symmetric random matrices, with $a_{i, j}^{(N)}$ real valued having mean zero and variance $s_{i, j}^{(N)}$ bounded by a fixed number, say 1. Classically, the first question is whether the empirical spectral distribution of an appropriate normalization of $A_N$ (e.g., $A_N/\sqrt{N}$) converges to a nontrivial probability measure, as in Wigner's theorem. Nothing guarantees that, and one can construct examples where the sequence of the empirical spectral distributions does not converge. The work \cite{zhu2020graphon}, using the notion of graphons, gave conditions on the variance profile $s_{i, j}^{(N)}, i, j\in [N], N\in \N^+$ so that convergence takes place. 

The next, natural, question concerns the convergence of the largest eigenvalue to the largest element of the support of the limiting distribution. Again, this in not automatic but requires additional assumptions. It was established in the recent works \cite{AEKN2019}, \cite{EKS2019}, \cite{husson2022large}, \cite{DHG2024} (whose focus however is not this question) for some class of random matrices with a general variance profile under the assumption that the entries of the matrices have finite all moments (the first two works assume that each $a_{i, j}^{(N)}$ is sharp sub-Gaussian, the last two assume that for each $k\in\N^+$ there is a constant bounding the $2k$ moment of each $a_{i, j}^{(N)}$). In this paper, we generalize these results, i.e., we establish the convergence of the largest eigenvalue of general variance profile random matrices to the largest element of the support of the limiting empirical spectral distribution under general assumptions for the variance profile of the matrices. Regarding finiteness of moments, we assume only that $\sup_{N\in\N^+, i, j\in[N]} \E|a_{i, j}^{(N)}|^4<\infty$.  

\smallskip

\noindent \textbf{Notation}.
    For any $N\times N$ matrix $A=(a_{i, j})_{i, j\in [N]}\in \R^{N\times N}$ with eigenvalues $\{\lambda_{i}(A)\}_{i\in [N]}$, the measure  
    \[\mu_{A}:=\frac{1}{N}\sum_{i \in [N]}\delta_{\lambda_{i}(A)}\]
    will be called the Empirical Spectral Distribution (E.S.D.) of $A$. When the eigenvalues are real, write $\lambda_{\max}(A)$ for the maximum among them. We will use the following two norms on square matrices. For $A\in\R^{N\times N}$,
    \begin{align}
     |A|_{\op}&:=\max_{x\in \R^N: ||x||_2=1} ||Ax||_2=\sqrt{\lambda_{\max}(AA^T)}\\
    ||A||_{\max}&:=\max_{i, j\in[N]}|a_{i, j}|.
    \end{align}
It is easy to see that $|A|_{\op}\le N ||A||_{\max}$ and if the matrix $A$ is symmetric, then  
\begin{equation}
    |A|_{\op}:=\max_{i \in [N]}|\lambda_{i}(A)|. 
\end{equation}

\section{Statement of the results}

Throughout this section, $(A_N)_{N\in\N^+}$ is a sequence of symmetric random matrices with independent entries (up to symmetry), $A_N=(a_{i,j}^{(N)})_{i, j\in [N]}$ is an $N\times N$ matrix, and all $\{a_{i, j}^{(N)}: N\in \N^+, i, j\in [N]\}$ are defined on the same probability space and take real values. 

A standard assumption for the sequence is the following (see relation (2.2.1) in \cite{bai2010spectral}).

\begin{assumption} \phantom{} 
\label{BasicAssum}
\begin{enumerate}
    \item $\E a_{i, j}^{(N)}=0$ for all $N\in\N^+, i, j\in [N]$, and $\sup_{N\in \N^+, i, j\in [N]} \E|a_{i, j}^{(N)}|^2<\infty$.
    \item For any $\varepsilon>0$, 
    \begin{equation}
        \lim_{N\to\infty}\frac{1}{N^2}\sum_{i, j\in [N]} \E\Big\{|a_{i, j}^{(N)}|^2\mathbf{1}_{|a_{i, j}^{(N)}|\ge \varepsilon \sqrt{N}}\Big\}=0.
    \end{equation}
\end{enumerate}
\end{assumption}
This is satisfied in the case that $\{a_{i, j}^{(N)}: N\in \N^+, i, j \in[N], i\le j\}$ are i.i.d. with mean 0 and finite variance. But it is not enough to guarantee that the ESD of the appropriately normalized $A_N$ converges to a nontrivial limit. To state a sufficient condition for this, we introduce some notation that will be used throughout the work. 
\noindent We let 
\begin{equation}
s_{i,j}^{(N)}:=\E\{|a_{i,j}^{(N)}|^{2}\}
\end{equation} for all $N\in\N^+, i, j\in[N]$ and $V_0:=\sup_{N\in \N^+, i, j\in [N]} s_{i, j}^{(N)}\in[0, \infty)$.

Also, let $\mathbf{C}_k$ be the set of ordered rooted trees with $k$ edges (where $k\in\N$) of all non-isomorphic plane rooted trees with $k+1$ vertices, i.e. all trees with $k+1$ vertices, a vertex distinguished as a root and an ordering amongst the children of any vertex. The number of such trees is the $k-$th Catalan number, i.e.,
    \begin{equation}\label{CatalNum} |\mathbf{C}_k|=\frac{1}{k+1} \binom{2k}{k},\end{equation}
    and a trivial bound that we will use is $|\mathbf{C}_{k}|\le 2^{2k}$. For each such tree, we consider its vertices ordered $v_0<v_1<\cdots<v_k$ so that $v_0$ is the root, each parent is smaller than its children, and the children keep the order they have as vertices of an ordered tree. A labeling of such a tree is an ordered $k+1$-tuple $(\ell_0, \ell_1, \cdots, \ell_k)$ of different objects, the object $\ell_i$ is the label of vertex $v_i$.

A quantity of fundamental importance for the sequel is the following sum 
\begin{equation} 
M_N(k):=\sum_{T\in \mathbf{C}_k}\sum_{\substack{\mathbf{i}\in [N]^{k+1} \\ \text{ labeling of } T}} \prod_{\{i, j\}\in E(T)} s^{(N)}_{i,j}.\label{orismos kalon orwn}     
\end{equation}
$E(T)$ denotes the set of edges of the tree $T$. Note that $M_N(0)=N$ since by convention the product over an empty index set equals 1.

\begin{assumption}\label{ConvergenceOnTrees}
 There is a probability measure $\mu$ on $\R$ such that for each $k\in\N$ it holds \begin{equation}\label{assumptionchangedgraphon}
 \lim_{N\to\infty}\frac{M_N(k)}{N^{k+1}}=\int x^{2k}\, d\mu(x).
 \end{equation}
\end{assumption}
A tool for checking this assumption is explained in Remark \ref{RemConvOnTrees} below. 

If the sequence $(A_N)_{N\in\N^+}$ satisfies both Assumptions \ref{BasicAssum} and \ref{ConvergenceOnTrees}, then $\mu_{A_N/\sqrt{N}}\Rightarrow \mu$ with probability one (see the proof of  Theorem 3.2 of \cite{zhu2020graphon}). The measure $\mu$ is symmetric with compact support  contained in $[-2\sqrt{V_0}, 2\sqrt{V}_0]$. The compactness of the support follows from \eqref{assumptionchangedgraphon}, $M_N(k)\le |\mathbf{C}_k| N^{k+1} V_0^k$, and $|\mathbf{C}_k|\le 2^{2k}$.
Let
\begin{equation} \label{muInfDef}
    \mu_\infty:=\sup \text{suppt} \,\mu.
\end{equation}

We seek conditions under which the maximum eigenvalue of $A_N/\sqrt{N}$ converges to $\mu_\infty$ in probability. An easy argument will give us the lower bound, and since $\lambda_{\max}(A)\le |A|_{\op}$ for any symmetric matrix $A\in\R^{N\times N}$, it will be enough to prove the upper bound for the operator norm of $A_N/\sqrt{N}$.

For this purpose, we need stronger assumptions. The following is stronger than Assumption \ref{BasicAssum}.
\noindent \begin{assumption} \phantom{} \label{Genikes ypotheseis} 
\begin{itemize} 
    \item[(a)] $\E a_{i,j}^{(N)}=0$ for all $N\in\N^+, i, j \in[N], \sup_{N \in \N^+, i, j\in [N]}\E |a_{i,j}^{(N)}|^2\le 1$, and $\sup_{N \in \N^+, i, j\in [N]}\E |a_{i,j}^{(N)}|^{4}< \infty$.
    \item[(b)] For any $\e>0$ it is true that 
    \begin{equation}\label{MaxToZero}\lim_{N\to\infty} \sum_{i, j} \PP(|a_{i, j}^{(N)}|\ge \e \sqrt{N})=0.
     \end{equation}
\end{itemize}
\end{assumption}
\noindent Note that condition \eqref{MaxToZero} is satisfied if we assume that all $\{a_{i, j}^{(N)}: N\in\N^+, i, j\in[N]\}$ have the same distribution with finite 4-th moment.

%

We gain control over $|A_N|_{\op}$ through the traces of high moments of $A_N$, and the main difficulty, which the next conditions (Assumption \ref{Assumptions 1} and Assumption \ref{Assumption 2}) try to address, is how to connect these traces with $\mu_{\infty}$, which emerges out of $\{\lambda_i(A_N):i\in[N]\}$ only after we take $N\to\infty$.  

\begin{assumption}\label{Assumptions 1}
For every $N \in \N^+$ and $i, j \in [N]$ it is true that
\begin{equation} \label{VarInequalities}
s_{i,j}^{(N)}\leq \min\{s_{2i,2j}^{(2N)}, s_{2i-1,2j}^{(2N)},  s_{2i-1,2j-1}^{(2N)}\}.
\end{equation}
\end{assumption}
For example, this assumption is satisfied if $s_{i, j}^{(N)}=h(i/N, j/N)$ for all $N\in\N^+, i, j\in[N]$, where $h:[0, 1]^2\to[0, \infty)$ is a function decreasing separately in each variable.  

In order to give the next sufficient condition, we first give some definitions.
\begin{defn}\label{orismos graphon}
    (i)  We call graphon any Borel measurable function $W:[0,1]\times [0,1]\rightarrow \R$ which is symmetric and integrable. \\
(ii) For any bounded graphon $W$ and any multigraph $G=(V,E)$, we call isomorphism density from $G$ to $W$ the quantity
\begin{equation} \label{isomDensity}
t(G,W):=\int_{[0,1]^{|V|}}\prod_{\{i,j\} \in E}W(x_{i,}x_{j}) \prod_{i \in V}dx_i.
    \end{equation}    
\end{defn}     

Now, let $(A_{N})_{N\in\N^+}$ be a sequence of random matrices with elements having finite second moment. Each $A_N$ defines a graphon, $W_N$, through the relation
\begin{equation} \label{VarGraphon}
 W_{N}(x,y):=s^{(N)}_{\lceil Nx\rceil,\lceil Ny\rceil }
\end{equation}
for each $(x, y)\in[0, 1]\times[0, 1]$. For this relation, $\lceil 0 \rceil$ denotes 1.
\begin{assumption}\label{Assumption 2} There exists a graphon $W$ such that the $W_N$ of \eqref{VarGraphon} satisfies
\begin{equation} \label{GraphCountConv}
    \lim_{N \rightarrow \infty}t(T, W_{N})=t(T, W) 
\end{equation}
 for any finite tree $T$. Moreover, for any $D>0$ there exists some $C=C(D)\in (0, \infty)$ and $N_{0}=N_{0}(D)\in\N^+$ such that for any $N\geq N_{0}$ it holds
    \begin{equation}\label{fast convergence assumption}
    \int_{[0, 1]^2}|W_{N}(x, y)-W(x, y)|\, dx \, dy\leq C N^{-D}.
    \end{equation}
\end{assumption}
This assumption together with Assumption \ref{Genikes ypotheseis} implies Assumption \ref{ConvergenceOnTrees} (This will be explained in Lemma \ref{Protasi gia anisotita graphon}). Again, we denote by $\mu_\infty$ the maximum of the support of $\mu$.

The assumptions we made so far will lead to convergence in probability of the largest eigenvalue. Next we give some extra condition, which will lead to the almost sure convergence of the largest eigenvalue.
\begin{assumption}\label{assumfora.s.} $(A_{N})_{N\in\N^+}$ is a sequence of symmetric random matrices, the entries of each $A_N$ are independent (up to symmetry), and there exists a random variable $X$ with mean $0$, variance $1$, and finite $4+\delta$ moment for some $\delta>0$, which stochastically dominates the entries of $A_N$ in the following sense
    \begin{align}\label{stochdom}
        \PP(|\{A_{N}\}_{i,j}|\geq t) \leq \PP (|X|\geq t), \text{ for all } t\in [0,\infty), N\in \N^+, i,j\in [N].
    \end{align}
   
\end{assumption}

We are now ready to present our first main result.
\begin{thm}\label{to theorima}
    Let $(A_{N})_{N\in\N^+}$ be a sequence of matrices satisfying Assumption \ref{Genikes ypotheseis}. Then if either Assumptions \ref{ConvergenceOnTrees} and \ref{Assumptions 1} hold or Assumption \ref{Assumption 2} holds, it is true that
    \begin{align}\label{siglisi sto theorima}
    \lim_{N \rightarrow \infty} \frac{|A_{N}|_{\op}}{\sqrt{N}}=\mu_{\infty} \text{ in probability }\end{align}
where $\mu_\infty$ is defined in \eqref{muInfDef}.
    Moreover, if the sequence $(A_{N})_{N\in\N^+}$ satisfies Assumption \ref{assumfora.s.}, the convergence in \eqref{siglisi sto theorima} holds in the almost sure sense. 
\end{thm}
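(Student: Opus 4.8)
The plan is to split the claimed limit into a lower bound and an upper bound. The lower bound $\liminf_N |A_N|_{\op}/\sqrt N \ge \mu_\infty$ (in probability, and a.s.\ under the stronger assumption) is the easy direction: since $\mu_{A_N/\sqrt N}\Rightarrow\mu$ with probability one (from Assumptions \ref{BasicAssum} and \ref{ConvergenceOnTrees}, which hold in both scenarios), for any $\e>0$ the measure $\mu$ puts positive mass on $(\mu_\infty-\e, \mu_\infty]$, so weak convergence forces $\mu_{A_N/\sqrt N}((\mu_\infty-\e,\infty))>0$ eventually, which means $A_N/\sqrt N$ has an eigenvalue exceeding $\mu_\infty-\e$; since $\lambda_{\max}(A)\le|A|_{\op}$ for symmetric $A$, this gives the lower bound. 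The real work is the matching upper bound $\limsup_N |A_N|_{\op}/\sqrt N \le \mu_\infty$, and for this I would use the moment method on $\tr\big((A_N/\sqrt N)^{2k}\big)$ with $k=k_N\to\infty$ slowly (polynomially in $\log N$, say $k_N\sim c\log N$), because $|A_N|_{\op}^{2k}\le \tr(A_N^{2k})$ and conversely a single large eigenvalue dominates the trace.

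Concretely, I would first perform a \textbf{truncation/centering step}: using Assumption \ref{Genikes ypotheseis}(b) (the condition \eqref{MaxToZero}) together with the finite-$4$th-moment bound, replace $a_{i,j}^{(N)}$ by $\tilde a_{i,j}^{(N)} = a_{i,j}^{(N)}\mathbf 1_{|a_{i,j}^{(N)}|\le \e\sqrt N}$ (recentered), showing the operator norm changes negligibly — this is the standard argument (cf.\ \cite{bai2010spectral}) that reduces to the case of bounded entries $|\tilde a_{i,j}^{(N)}|\le 2\e\sqrt N$ with variances still controlled. Then I would estimate $\E\,\tr(\tilde A_N^{2k})$ by expanding into closed walks of length $2k$ on $[N]$; the contribution of a walk depends on its multigraph structure, and standard Wigner-type combinatorics show that walks whose underlying graph is not a tree (i.e.\ with a repeated edge contributing a higher moment, or with a cycle) are suppressed by powers of $N^{-1}$ or by the truncation level $\e$. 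The dominant contribution comes from ``tree-like'' walks — each edge traversed exactly twice, the walk tracing a plane rooted tree with $k$ edges — and this contribution is exactly $\sum_{T\in\mathbf C_k}\sum_{\mathbf i \text{ labeling } T}\prod_{\{i,j\}\in E(T)} s_{i,j}^{(N)} = M_N(k)$. So $\E\,\tr((\tilde A_N/\sqrt N)^{2k}) = M_N(k)/N^{k+1} + (\text{error})$.

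The \textbf{main obstacle}, and the crux of the proof, is controlling these error terms \emph{uniformly as $k=k_N\to\infty$}, and linking $M_N(k_N)/N^{k_N+1}$ to $\mu_\infty^{2k_N}$. For the first issue, the non-tree walks proliferate in number (roughly $k^{O(\text{defect})}$ many), so one needs the defect-based $N^{-1}$ gains and the $4$th-moment/truncation bounds to beat $\mathrm{poly}(k)$ factors when $k\sim c\log N$; this is delicate and is where finiteness of the $4$th moment (rather than just the $2$nd) is essential, and where the extra $4+\delta$ moment in Assumption \ref{assumfora.s.} buys a Borel--Cantelli summable bound for a.s.\ convergence. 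For the second issue, Assumption \ref{ConvergenceOnTrees} gives $M_N(k)/N^{k+1}\to \int x^{2k}d\mu$ for \emph{fixed} $k$, but I need it for $k=k_N\to\infty$; here I would instead argue structurally — Assumptions \ref{Assumptions 1} (the monotonicity $s_{i,j}^{(N)}\le \min\{s_{2i,2j}^{(2N)},\dots\}$) or \ref{Assumption 2} (graphon convergence with the fast-convergence rate \eqref{fast convergence assumption}) are precisely designed so that $M_N(k)/N^{k+1}$ is, up to controlled error, \emph{monotone} or \emph{rapidly convergent} in $N$, hence comparable to the limiting tree functional $t_k(W):=\sum_{T\in\mathbf C_k} t(T,W)$. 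One then shows $t_{k}(W)^{1/2k}\to \mu_\infty$ as $k\to\infty$ (this is a general fact: $\int x^{2k}d\mu$ has $2k$-th root tending to $\sup\mathrm{suppt}\,\mu$, combined with the identification $\int x^{2k}d\mu = \lim_N t_k(W_N) = t_k(W)$ via the $k$-fixed statement). Chaining these, $\E\,\tr((A_N/\sqrt N)^{2k_N}) \le (\mu_\infty+o(1))^{2k_N}\cdot(\text{poly factors})$, and since $k_N\sim c\log N$ the polynomial-in-$N$ prefactors are absorbed into $(1+o(1))^{2k_N}$; Markov's inequality on $\tr$ then yields $\PP(|A_N|_{\op}/\sqrt N > \mu_\infty+\e)\to 0$, and summability under Assumption \ref{assumfora.s.} gives the almost sure statement via Borel--Cantelli. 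I would assemble the lemmas proving (i) the truncation reduction, (ii) the tree-walk main term $= M_N(k)$, (iii) the non-tree error bounds valid for $k=k_N$, and (iv) the comparison $M_N(k)/N^{k+1}$ versus $t_k(W)$ and the asymptotics $t_k(W)^{1/2k}\to\mu_\infty$, then combine.
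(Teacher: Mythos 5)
Your proposal follows essentially the same route as the paper's proof. The paper packages your step (iii) as Proposition \ref{protasi gia megalo trace} together with the counting Lemma \ref{CycleCountLemma}, your step (iv) as Lemmas \ref{protasi gia anisotita kalon orwn me apeiro norma} and \ref{Protasi gia anisotita graphon} (which realize exactly your ``monotone'' and ``rapidly convergent'' alternatives — the first via the doubling maps $\phi_{\mathbf d}:[N]^{k+1}\to[2N]^{k+1}$, producing the clean bound $M_N(k)/N^{k+1}\le\int x^{2k}\,d\mu\le\mu_\infty^{2k}$ with no $o(1)$ loss at all, the second via the telescoping estimate \eqref{telescopicBound1}), and the Markov-plus-Borel--Cantelli step as Proposition \ref{PropSRCondition} and Lemma \ref{AsConvLemma}, tied together by the condition $\Sigma(R)$. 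One concrete detail to watch: the paper truncates at $N^{1/2-\eta}$ for a fixed small $\eta>0$ rather than at $\e\sqrt N$ as in the Bai--Silverstein exposition; with a constant $\e$ the gain per excess edge in a non-tree walk is only $\e$, which is not enough to absorb the $\mathrm{poly}(k)$ combinatorial prefactors when $k\sim c\log N$, so in carrying out your truncation step you would either need $\e=\e_N\to 0$ at an explicit rate or, more simply, adopt the polynomial truncation level, which turns the geometric ratio in the bad-cycle sum into $\mathrm{poly}(k)/N^{2\eta}$ and makes the $k\le C_1\log N$ regime work cleanly.
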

Note that Assumption \ref{Assumptions 1} is restrictive and does not cover several of the well-known and studied models. Thus, in what follows, we try to extend the domain of validity of  Theorem \ref{to theorima}. We first give two definitions.

For $N\in\mathbb{N}^+$ and $U\subset\mathbb[N]^2$:

$\bullet$ We call a $(x, y)\in U$ \textit{internal point} of $U$ if $\{(x+d_1, y+d_2): d_1, d_2\in\{-1, 0, 1\}\}\subset U$. We denote by $U^{o}$ the set of internal points of $U$. 

$\bullet$ We say that $U$ is \textit{axially convex} if $(i, j)\in U, (i, j') \in U, r\in [N], (r-j)(r-j')<0$ imply $(i, r)\in U$ and $(i, j)\in U, (i', j) \in U, r\in [N], (r-i)(r-i')<0$ imply $(r, j)\in U$.

\begin{defn}[Generalized step function variance profile]\label{defngenvar}
    Let $(A_N)_{N\in\N^+}$ be a sequence of symmetric random matrices, $A_N$ of dimension $N\times N$, with each element having zero mean and finite second moment. Moreover, suppose that there exists an $\N^+-$valued sequence $(d_N)_{N\in\N^+}$ with $\lim_{N \to \infty}d_{N}/N=0$ and such that for each $N$ there is a partition $\mathcal{P}_N:=\{\mathcal{B}_i^{(N)}:i=1, 2, \ldots, d_N\}$ of the grid $[N]^2$ consisting of $d_{N}$ axially convex sets with the following properties. 
    \begin{itemize}
        \item[(a)] If $A\in \mathcal{P}_N$ then $R(A):=\{(i, j): (j, i)\in A\}\in \mathcal{P}_N$.     
         \item[(b)] For any $m\in [d_{N}]$ there exists $f\in [d_{2N}]$ such that
        \begin{align}\label{growth of interior of convex sets}
            2\mathcal{B}^{(N)}_{m} \subset \mathcal{B}^{(2N)}_{f}. \end{align}          
        \item[(c)] For any $N\in \N$, $m\in [d_{N}]$ and $i\in [N]$ the line segment $x=i$ intersects $\mathcal{B}_{m}^{(N)}\setminus (\mathcal{B}_m^{(N)})^{\circ}$ at most $2$ times.   
    \end{itemize}
        Then if for all $(i,j)\in[N]^2$ the variance of the $(i,j)$-entry of $A_{N}$ is given by
        \begin{align}\label{defnofvariancesconvex}
            s_{i,j}^{(N)}:=\sum_{m\in [d_{N}]} s^{(N)}_{m}1_{(i, j)\in \mathcal{B}^{(N)}_{m}} 
            \end{align}
        for some set of numbers $\{s_{i}\}_{i\in [d_{N}]}$ so that $s_m^{(N)}=s_k^{(N)}$ if $R(\mathcal{B}^{(N)}_m)=\mathcal{B}^{(N)}_k$, we will call the sequence of matrices $(A_{N})_{N\ge1}$ random matrix model whose variance profile is given by a generalized step function. 
\end{defn}

The following Theorem is a corollary of Theorem \ref{to theorima} and gives results of the type \eqref{siglisi sto theorima} for the operator norm of the matrix

$\bullet$ $A_N$ when $A_N$ is a non-periodic band matrix with band size proportional to $N$ or has a step or continuous profile.

$\bullet$ $A_N A_N^T$ (i.e., Gram matrix) when $A_N$ is a rectangular matrix with step or continuous variance profile. 

Details are given after the next theorem and in subsection \ref{applications}.

\begin{thm}\label{Theorima gia sxedon kalous pinakes}
    Let $(A_N)_{N\in \N^+}$ be a random matrix model whose variance profile is given by a generalized step function. If it also satisfies Assumptions  \ref{ConvergenceOnTrees}, \ref{Genikes ypotheseis}, and for every $N\in\N$ and $(i,j)\in [N]^2$ it is true that 
    \begin{align}\label{extrassuconvex}
        s_{i,j}^{(N)}\leq s_{2i,2j}^{(2N)},
        \end{align}
then
 \begin{equation} \label{GSFInP} \lim_{N \rightarrow \infty}\frac{|A_{N}|_{\op}}{\sqrt{N}}=\mu_{\infty} \ \  \ \ \text{in probability},
 \end{equation}
where $\mu_\infty$ is defined in \eqref{muInfDef}. Moreover, if the sequence $(A_N)_{N\in\N^+}$ satisfies Assumption \ref{assumfora.s.} the convergence in \eqref{GSFInP} holds in the almost sure sense.
\end{thm}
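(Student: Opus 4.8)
The statement is a corollary of Theorem~\ref{to theorima}, so the plan is to verify the hypotheses of that theorem for the sequence at hand, or for a negligible perturbation of it. Assumptions~\ref{ConvergenceOnTrees} and \ref{Genikes ypotheseis} are already assumed; the matching lower bound $\liminf_{N}|A_N|_{\op}/\sqrt N\ge\mu_\infty$ is the easy bound mentioned just before Theorem~\ref{to theorima}, which needs only those two assumptions; and, granting the convergence in probability in \eqref{GSFInP}, the almost sure statement under Assumption~\ref{assumfora.s.} follows exactly as in Theorem~\ref{to theorima}. So the whole problem reduces to the upper bound $\limsup_{N}|A_N|_{\op}/\sqrt N\le\mu_\infty$, and for this it suffices to place ourselves under the first alternative of Theorem~\ref{to theorima}, i.e.\ to produce the doubling inequality \eqref{VarInequalities} of Assumption~\ref{Assumptions 1}: exactly at the points interior to their blocks, and, for the remaining thin set of points, after an innocuous modification of the model.

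For the interior points I would argue as follows. Fix $(i,j)\in(\mathcal B^{(N)}_{m})^{\circ}$; then $(i,j),(i-1,j),(i,j-1),(i-1,j-1)$ all lie in $\mathcal B^{(N)}_{m}$, so in particular $i,j\ge 2$. By property~(b) of Definition~\ref{defngenvar} (relation \eqref{growth of interior of convex sets}) there is $f\in[d_{2N}]$ with $2\mathcal B^{(N)}_{m}\subset\mathcal B^{(2N)}_{f}$, whence the four doubled points $(2i,2j),\ (2i-2,2j),\ (2i,2j-2),\ (2i-2,2j-2)$ all belong to $\mathcal B^{(2N)}_{f}$. Two uses of axial convexity of $\mathcal B^{(2N)}_{f}$ then finish it: first along the vertical lines $x=2i-2$ and $x=2i$ one obtains $(2i-2,2j-1),(2i,2j-1)\in\mathcal B^{(2N)}_{f}$, and then along the horizontal lines $y=2j$ and $y=2j-1$ one obtains $(2i-1,2j),(2i-1,2j-1)\in\mathcal B^{(2N)}_{f}$. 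Hence $s^{(2N)}_{2i,2j}=s^{(2N)}_{2i-1,2j}=s^{(2N)}_{2i-1,2j-1}=s^{(2N)}_{f}$, while \eqref{extrassuconvex} gives $s^{(N)}_{i,j}=s^{(N)}_{m}\le s^{(2N)}_{2i,2j}=s^{(2N)}_{f}$; this is precisely \eqref{VarInequalities} at $(i,j)$.

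For the non-interior points, set $\mathcal E_N:=\bigcup_{m\in[d_N]}\big(\mathcal B^{(N)}_{m}\setminus(\mathcal B^{(N)}_{m})^{\circ}\big)$. By property~(c) of Definition~\ref{defngenvar} (and its transpose, which holds by property~(a)) every vertical and every horizontal line of $[N]^2$ meets $\mathcal E_N$ in at most $2d_N$ points, so, since $d_N/N\to 0$, $\mathcal E_N$ is $o(N^2)$ in size and has only $o(N)$ points on each line. I would then introduce an auxiliary sequence $(\widetilde A_N)$ of symmetric matrices with independent entries whose variance profile $\widetilde s^{(N)}$ (i) equals $s^{(N)}$ off a suitably fattened boundary $\mathcal E_N^{+}\supset\mathcal E_N$ and equals the ceiling $V_0$ on $\mathcal E_N^{+}$; (ii) satisfies \eqref{VarInequalities} for \emph{all} $(i,j)$ --- the computation above covers the points where $\widetilde s=s$, and the fattening is chosen so that at the remaining points both sides of \eqref{VarInequalities} equal $V_0$; and (iii) still differs from $s^{(N)}$ only on an $o(N^2)$-set with $o(N)$ points per line. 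Then $(\widetilde A_N)$ meets the hypotheses of the first alternative of Theorem~\ref{to theorima} --- Assumption~\ref{ConvergenceOnTrees} with the same limit $\mu$, because $M_N(k)$ is stable under altering $o(N^2)$ of the variances; Assumption~\ref{Genikes ypotheseis}; and Assumption~\ref{Assumptions 1} --- so $|\widetilde A_N|_{\op}/\sqrt N\to\mu_\infty$. To transfer this to $A_N$, couple the two matrices as $\widetilde A_N=A_N+B_N$ with $B_N$ supported on $\mathcal E_N^{+}$ and independent of $A_N$ there; then $|A_N|_{\op}\le|\widetilde A_N|_{\op}+|B_N|_{\op}$, and since $B_N$ has independent mean-zero entries of variance $O(1)$ with only $o(N)$ of them per row and per column, a sharp non-asymptotic bound for sparse random matrices together with the truncation supplied by \eqref{MaxToZero} gives $|B_N|_{\op}=o(\sqrt N)$ in probability (and almost surely under Assumption~\ref{assumfora.s.}, using the $4+\delta$ moment). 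Combining the two facts yields $\limsup_{N}|A_N|_{\op}/\sqrt N\le\mu_\infty$.

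The main obstacle is the construction of $\mathcal E_N^{+}$ and $\widetilde s$: one must fatten $\mathcal E_N$ just enough that the doubling map sends $\mathcal E_N^{+}$ at level $N$ into $\mathcal E_{2N}^{+}$ --- this is what forces \eqref{VarInequalities} to hold for $\widetilde s$ at every point, not only at interior ones --- while keeping the number of points of $\mathcal E_N^{+}$ on each line equal to $o(N)$ \emph{uniformly in $N$}, despite this dyadic propagation. This is precisely where the ``at most two boundary points per line'' content of property~(c) and the rate $d_N/N\to 0$ have to be used quantitatively, and where an approximation argument (reducing a general $d_N$ to the case of bounded $d_N$) may be necessary. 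A secondary technical point is the sparse-matrix operator-norm estimate for $|B_N|_{\op}$ and, in the non-symmetric case, a preliminary symmetrization of the entries.
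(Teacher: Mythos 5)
Your overall architecture is right --- decompose the matrix into an ``interior'' piece to which Theorem~\ref{to theorima} applies and a ``thin boundary'' piece whose operator norm is negligible, and your verification of \eqref{VarInequalities} at interior points via property (b), axial convexity, and \eqref{extrassuconvex} matches the paper's argument exactly. But the treatment of the boundary contains a genuine gap, and you flag it yourself as the ``main obstacle.''

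The problem is the choice to \emph{raise} the boundary variances to the ceiling $V_0$ on a fattened set $\mathcal E_N^{+}$. For the modified profile $\widetilde s$ to satisfy \eqref{VarInequalities} at a point $(i,j)\in\mathcal E_N^{+}$ (where $\widetilde s_{i,j}^{(N)}=V_0$), you need all three of $(2i,2j),(2i-1,2j),(2i-1,2j-1)$ to lie in $\mathcal E_{2N}^{+}$; that is, $\mathcal E_N^{+}$ must propagate forward under the doubling map into $\mathcal E_{2N}^{+}$. But the block boundaries $\mathcal E_{2^kN}$ at different dyadic scales are not related to one another by the doubling map in any controlled way --- property (b) only says $2\mathcal B_m^{(N)}\subset\mathcal B_f^{(2N)}$, not that the boundary of $\mathcal B_f^{(2N)}$ sits near $2\partial\mathcal B_m^{(N)}$. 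To force the inclusion you would have to absorb $\bigcup_{k\ge 0}\lceil 2^{-k}\mathcal E_{2^kN}\rceil$ into $\mathcal E_N^{+}$, and nothing in the hypotheses prevents this union from failing to be $o(N)$ per line; the rate $d_N/N\to 0$ by itself does not give the uniform control across scales that you would need. This is unresolved in the proposal, and your own final paragraph concedes it may require an ``approximation argument'' that is not supplied.

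The paper's proof avoids the whole difficulty by doing the opposite modification: it \emph{zeroes out} the boundary, defining $A_N^{(1)}$ to have entry $\mathbf 1_{(i,j)\in\mathcal D_N}\{A_N\}_{i,j}$ where $\mathcal D_N$ is the union of block interiors. Then the variance of $A_N^{(1)}$ at a non-interior point is $0$, so \eqref{VarInequalities} holds there \emph{trivially}, and no dyadic propagation condition is needed --- Assumption~\ref{Assumptions 1} for $A_N^{(1)}$ is verified purely by the interior argument you already gave. The remaining pieces are handled with tools already in the paper rather than an external sparse-matrix estimate: $|A_N^{(2)}|_{\op}/\sqrt N\to 0$ follows from the bound $M_N^{(2)}(k)\le N(8d_N)^k$ (each row has at most $2d_N$ boundary indices by property (c)) together with Proposition~\ref{PropSRCondition} applied with $R=\epsilon$ for every $\epsilon>0$, and almost surely via Lemma~\ref{AsConvLemma}; and the preservation of Assumption~\ref{ConvergenceOnTrees} for $A_N^{(1)}$ is established by Lemma~\ref{BreakLemma}, whose hypothesis (c) is checked via a rank/Levy-distance estimate, rather than by the informal ``$M_N(k)$ is stable under altering $o(N^2)$ variances'' claim. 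If you replace your $V_0$-fattening by the paper's zeroing-out, the obstacle you identified disappears and the rest of your outline goes through.
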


For any $N\in \N^+$ and any two $N\times N$ matrices $A,B$ we will denote by $A\odot B$ their Hadamard product, which is the entry-wise product of $A,B$,i.e., the $N\times N$ matrix with entries
\begin{equation}
\{A\odot B\}_{i,j}=\{A\}_{i,j}\{B\}_{i,j} \text{ for all }i, j \in [N]. 
\end{equation} 
Note that Assumption \ref{assumfora.s.} is satisfied if $A_{N}$ can be written as
\begin{equation}
A_N=  \Sigma_{N}\odot A'_{N},  
\end{equation}
 where $A'_{N}$ is a sequence of symmetric 
 random matrices with i.i.d. entries all following the same law, with $0$ mean, unit variance and finite $4+\delta$ moment for some $\delta>0$ and for each $N$ the entries of $\Sigma_{N}$ are elements of $[0,1]$.
 
 Next, we study the operator norm of two widespread random matrix models. 

\begin{defn}[Step function variance profile] \label{defn rmt step function}

Consider
        
\begin{itemize}\item[a)] $m\in\N^+$ and numbers $\{\sigma_{p,q}\}_{p, q \in [m] }\in [0,1]^{m\times m}$ with $\sigma_{p, q}=\sigma_{q, p}$ for all $p, q\in[m]$. 
\item[b)]  For each $N\in\N^+$, a partition of $[N]$ into $m$ intervals   $\{I_p^{(N)}\}_{p \in [m]}$. The numbering of the intervals is such that $x<y$ whenever $x\in I_p^{(N)}, y\in I_q^{(N)}$ and $p<q$. Let $L_p^{(N)}$ and $R_p^{(N)}$ be the left and right endpoint respectively of $I_p^{(N)}$. 

\item[c)] Numbers $0=\alpha_0<\alpha_1<\cdots<\alpha_{m-1}<\alpha_m:=1$. We assume that $\lim_{N\to\infty}R_p^{(N)}/N=\alpha_p$ for each $p\in[m]$.

\item[d)] A random variable $X_0$ with $\E(X_0)=0, \E(X_0^2)=1$. 
\end{itemize}
For each $N\in\N^+$, define the matrix $\Sigma_N\in\R^{N\times N}$ by $(\Sigma_N)_{i, j}=\sigma_{p, q}$ if $i\in I_p^{(N)}, j\in I_q^{(N)}$, and let $\{A_{N}\}_{N \in \N^+}$ be the sequence of symmetric random matrices defined by
\begin{equation} A_{N}=\Sigma_{N}\odot A'_{N} \label{StepModel}
\end{equation}
where $A'_N$ is symmetric and its entries are independent (up to symmetry) random variables all with distribution the same as $X_0$. Then $(A_N)_{N \in \N^+}$ will be called \emph{symmetric random matrix model whose variance profile is given by a step function}. 
\end{defn}

Let $\hat I_p:=[\alpha_{p-1}, \alpha_p)$ for $p\in[m-1],$ and $\hat I_m:=[\alpha_{m-1}, 1]$. These intervals together with the numbers from a) determine a function $\sigma:[0,1]^{2}\to [0,1]$ as follows
\begin{equation}
    \sigma(x,y):=\sigma_{p, q} \text{ if } x\in \hat I_p, y\in \hat I_q.
\end{equation}
We call the function $\sigma^2$ the variance profile of the model.

\begin{defn}[Continuous function variance profile] \label{definition of rm cont}
           For
           \begin{itemize}
           \item[a)] a continuous and symmetric function $\sigma:[0, 1]^2\to[0, 1]$ (i.e, $\sigma(x, y)=\sigma(y, x)$ for all $x, y\in[0, 1]$),
           \item[b)] a sequence $(\Sigma_N)_{N\in\N^+}$ of symmetric matrices, $\Sigma_N\in[0, 1]^{N\times N}$, with the property
           \begin{equation} \label{ContinuousVPApprox}
               \lim_{N\to\infty}\sup_{1\le i, j\le N}\left|(\Sigma_N)_{i, j}-\sigma(i/N, j/N)\right|=0,
           \end{equation}
           \item[c)] a random variable $X_0$ with $\E(X_0)=0, \E(X_0^2)=1$,
           \end{itemize}
 consider the sequence $\{A_{N}\}_{N \in \N^+}$ of symmetric random matrices, $A_N$ of dimension $N\times N$, defined by
\begin{equation} A_{N}=\Sigma_{N}\odot A'_{N} \label{ContinuousModel}
\end{equation}
where the entries of $A'_N$ are independent (up to symmetry) random variables all with distribution the same as $X_0$. Then we say that $(A_N)_{N \in \N^+}$ is a \emph{random matrix model whose variance profile is given by a continuous function.} Again, we call the function $\sigma^2$ the variance profile. \end{defn}

\begin{rem}[Checking Assumption \ref{ConvergenceOnTrees}] \label{RemConvOnTrees}
A sufficient condition for the validity of Assumption \ref{ConvergenceOnTrees} is that $(A_N)_{N\in\N^+}$ satisfies Assumption \ref{BasicAssum} and there is a graphon $W$ such that $W_N\to W$ almost everywhere in $[0, 1]\times[0, 1]$.

Indeed, the bounded convergence theorem gives that $t(T, W_N)\to t(T, W)$ for all trees. Then Theorem 3.2 (a) of \cite{zhu2020graphon} shows that the ESD of $A_N/\sqrt{N}$ converges almost surely weakly to a probability measure $\mu^{\sqrt{W}}$ whose $2k$ moment equals
\begin{equation}
    \lim_{N\to\infty} \sum_{T\in \mathbf{C}_k}t(T, W_N)
\end{equation}
while the moments of odd order are 0. Then, for each $T\in \mathbf{C}_k$,
\begin{equation}
0\le t(T, W_N)-
\sum_{\substack{\mathbf{i}\in [N]^{K+1} \\ \text{ labeling of } T}} N^{-k-1} \prod_{\{i, j\}\in E(T)} s^{(N)}_{i,j}=O(1/N),
\end{equation}
because $ t(T, W_N)$ is simply the same as the sum in the previous relation with the only difference that $\textbf{i}$ is not required to be a labeling, i.e., it can have repetitions. It follows that Assumption \ref{ConvergenceOnTrees} holds. As we remarked after \eqref{assumptionchangedgraphon}, $\mu^{\sqrt{W}}$ is symmetric and has bounded support. Denote by $\mu^{\sqrt{W}}_{\infty}$ the largest element of the support.
\end{rem}

\quad If, in the two models above, $X_0$ has finite $4+\delta$ moment for some small $\delta>0$, then it is easy to see that the sequence $(A_{N})_{N\in\N^+}$ satisfies Assumptions \ref{Genikes ypotheseis}. It also satisfies Assumption \ref{ConvergenceOnTrees} because it satisfies Assumption \ref{BasicAssum} and, in both cases, $W_N(x, y)$ converges to $\sigma^2(x, y)$ for almost all $(x, y)\in[0, 1]\times[0, 1]$, thus the preceding remark applies.  

Our result for the model \eqref{StepModel} is the following.
 
\begin{thm}\label{theoremstepfunction}
Let $(A_N)_{N\in\N^+}$ be a random matrix model whose variance profile is given by a step function as above. Assume that $X_0$ has mean value 0, variance 1, and finite $4+\delta$ moment, for some small $\delta>0$. Then it is true that
\begin{equation} \label{StepProfLimit}
      \lim_{N \rightarrow \infty}\frac{|A_{N}|_{\op}}{\sqrt{N}}=\mu^{\sigma}_{\infty} \ \  \ \ \text{a.s.}
\end{equation}
\end{thm}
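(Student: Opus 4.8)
The strategy is to deduce Theorem~\ref{theoremstepfunction} from Theorem~\ref{Theorima gia sxedon kalous pinakes} (or directly from Theorem~\ref{to theorima}) by verifying that a step-function model fits into the framework of a generalized step-function variance profile. First I would identify the partition: for each $N$, the blocks $\mathcal{B}^{(N)}_{(p,q)}:=I_p^{(N)}\times I_q^{(N)}$, $p,q\in[m]$, partition $[N]^2$ into $d_N:=m^2$ pieces, each of which is a product of intervals and hence axially convex; the number $d_N=m^2$ is constant, so trivially $d_N/N\to0$. Property (a) of Definition~\ref{defngenvar} holds because $R(I_p\times I_q)=I_q\times I_p$ is again a block and $\sigma_{p,q}=\sigma_{q,p}$ gives the matching condition on the constants $s^{(N)}_m$. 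Property (c) is clear since the boundary $\mathcal{B}\setminus\mathcal{B}^\circ$ of a rectangle meets any vertical line in at most two points (the top and bottom edge). The only genuinely delicate point is property (b), the nesting $2\mathcal{B}^{(N)}_m\subset\mathcal{B}^{(2N)}_f$: this is where I would use that $R_p^{(N)}/N\to\alpha_p$. One cannot expect $2I_p^{(N)}\subset I_p^{(2N)}$ to hold for every $N$ and every $p$; rather, I would pass to a subsequence or re-index so that for large $N$ the doubled endpoints line up inside the right interval, and then argue that discarding finitely many $N$ does not affect an almost-sure limit. Concretely, I would note that $|A_N|_{\op}/\sqrt N$ for the finitely many bad $N$ is irrelevant and reduce to $N$ along which the inclusion holds.

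\textbf{Remaining verifications.} Next I would check the hypotheses of Theorem~\ref{Theorima gia sxedon kalous pinakes}. Assumption~\ref{Genikes ypotheseis} holds because $X_0$ has mean $0$, variance $1$ and finite $4+\delta$ moment (hence finite fourth moment, with $s^{(N)}_{i,j}=\sigma_{p,q}^2\le1$), and condition \eqref{MaxToZero} is immediate since all entries are $\sigma_{p,q}X_0$ with a common distribution of finite fourth moment, so $N^2\,\PP(|\sigma_{p,q}X_0|\ge\e\sqrt N)\le N^2\,\PP(|X_0|\ge\e\sqrt N)\to0$ by Markov applied to $\E|X_0|^4$. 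Assumption~\ref{ConvergenceOnTrees} follows from Remark~\ref{RemConvOnTrees}: the model satisfies Assumption~\ref{BasicAssum}, and $W_N(x,y)=s^{(N)}_{\lceil Nx\rceil,\lceil Ny\rceil}\to\sigma^2(x,y)$ for every $(x,y)$ with $x,y\notin\{\alpha_0,\dots,\alpha_m\}$, i.e. almost everywhere, because $\lceil Nx\rceil/N\to x$ places the index in the interior of the correct block $\hat I_p\times\hat I_q$ for large $N$; hence the limiting measure $\mu=\mu^{\sqrt W}=\mu^{\sigma}$ with $\mu_\infty=\mu^{\sigma}_\infty$. The monotonicity condition \eqref{extrassuconvex}, $s^{(N)}_{i,j}\le s^{(2N)}_{2i,2j}$, holds with equality: if $i\in I_p^{(N)}$ and $j\in I_q^{(N)}$ then, on the subsequence chosen above where $2I_p^{(N)}\subset I_p^{(2N)}$, the index $(2i,2j)$ lies in $I_p^{(2N)}\times I_q^{(2N)}$, so both sides equal $\sigma_{p,q}^2$. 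Also the hypothesis of Assumption~\ref{assumfora.s.} is satisfied with $X=X_0$ itself, since $|\{A_N\}_{i,j}|=\sigma_{p,q}|X_0|\le|X_0|$ (as $\sigma_{p,q}\in[0,1]$), giving the stochastic domination \eqref{stochdom}; $X_0$ has the required mean $0$, variance $1$ and finite $4+\delta$ moment. Therefore Theorem~\ref{Theorima gia sxedon kalous pinakes} applies along the subsequence and yields the almost sure convergence \eqref{StepProfLimit}, which then upgrades to the full sequence as explained.

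\textbf{Main obstacle.} The crux is genuinely the combinatorial-geometric bookkeeping in property (b): arranging the partitions $\{\mathcal{P}_N\}$ so that doubling a block at level $N$ lands inside a block at level $2N$. Because the interval endpoints $R_p^{(N)}$ are only asymptotically $\alpha_p N$, the naive choice $I_p^{(2N)}=\{1,\dots,2N\}\cap(2N\alpha_{p-1},2N\alpha_p]$ need not contain $2I_p^{(N)}$ for small $N$. I expect to handle this either by (i) redefining, for the purpose of the proof, the level-$2^k N_0$ partitions via repeated doubling of a fixed base partition $\mathcal{P}_{N_0}$, so that the nesting is exact by construction, and checking that the resulting endpoints still satisfy $R_p/N\to\alpha_p$; or (ii) invoking a lemma (proved elsewhere in the paper) that the operator norm limit is insensitive to modifying the variance profile on a set of asymptotically vanishing measure, which lets me replace the given partition by a nested one. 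Once this structural point is settled, everything else is routine moment and domination estimates already packaged in the earlier results.
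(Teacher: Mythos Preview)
Your plan to route through Theorem~\ref{Theorima gia sxedon kalous pinakes} is natural, and you correctly isolate the nesting condition (b) of Definition~\ref{defngenvar} as the only nontrivial point. However, both of your proposed fixes have gaps. The claim that only \emph{finitely many} $N$ are bad is false: since the hypothesis gives only $R_p^{(N)}/N\to\alpha_p$ with no rate, one can have $2R_p^{(N)}>R_p^{(2N)}$ for infinitely many $N$ (take e.g.\ $R_1^{(N)}=\lfloor N/2\rfloor+(-1)^N$), so $2I_p^{(N)}\not\subset I_p^{(2N)}$ infinitely often and condition~\eqref{extrassuconvex} fails on the same set of $N$. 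Option (i), redefining partitions along $2^kN_0$ by doubling, does give exact nesting but then $R_p^{(2^kN_0)}/(2^kN_0)=R_p^{(N_0)}/N_0$ is \emph{constant} in $k$, not converging to $\alpha_p$; moreover you have only handled a geometric subsequence, and Theorem~\ref{Theorima gia sxedon kalous pinakes} needs the structure for all $N$. Option (ii), a lemma saying the operator norm is insensitive to modifying the variance profile on a set of vanishing area, is morally right but no such lemma is proved in the paper: Proposition~\ref{symperasma gia proseggiseis pinakon} requires $|\Sigma_N-\Sigma_N^{(n)}|_{\max}\to0$, which fails here since the entries differ by $|\sigma_{p,q}-\sigma_{p',q'}|=O(1)$ on the mismatch region.

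The paper does not go through Theorem~\ref{Theorima gia sxedon kalous pinakes} at all. It works directly with condition $\Sigma(R)$ and Lemma~\ref{AsConvLemma}. The key construction is an auxiliary matrix $\hat A_N=\hat\Sigma_N\odot A_N'$ whose partition uses the \emph{limiting} breakpoints $\alpha_p$ exactly: $(\hat\Sigma_N)_{i,j}=\sigma_{p,q}$ when $\alpha_{p-1}<i/N\le \alpha_p$ and $\alpha_{q-1}<j/N\le \alpha_q$. For this model the full Assumption~\ref{Assumptions 1} (not just the generalized version) holds with equality, so Lemma~\ref{protasi gia anisotita kalon orwn me apeiro norma} gives $\hat M_N(k)\le N^{k+1}(\mu_\infty^\sigma)^{2k}$. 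The substantive new step (Claim~2 in the proof) is a direct comparison of the tree sums: writing $\varepsilon_N=\max_p|R_p^{(N)}/N-\alpha_p|$ and $\Delta_p^{(N)}=I_p^{(N)}\cap[\alpha_{p-1}N+1,\alpha_pN)$, one splits each labeling $(i_1,\dots,i_{k+1})$ according to which indices fall outside $\bigcup_p\Delta_p^{(N)}$ and shows
\[
M_N(k)\le (1+\theta\varepsilon_N)^{k+1}\hat M_N(k)\le e^{\theta(k+1)\varepsilon_N}\,N^{k+1}(\mu_\infty^\sigma)^{2k},
\]
using that each $|I_p^{(N)}\setminus\Delta^{(N)}|\le C_2\varepsilon_N N$ while $|\Delta_p^{(N)}|\ge C_1N$. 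This yields $\Sigma((1+\epsilon)\mu_\infty^\sigma)$ for every $\epsilon>0$, and Lemma~\ref{AsConvLemma} finishes. Your option~(ii), made precise, would instead bound $|A_N-\hat A_N|_{\op}/\sqrt N\to0$ a.s.\ via the $A_N^{(2)}$ argument in the proof of Theorem~\ref{Theorima gia sxedon kalous pinakes} (each row of $A_N-\hat A_N$ has at most $2m\varepsilon_N N$ nonzero entries, so $M_N'(k)\le N(8m\varepsilon_N N)^k$ and $\Sigma(\epsilon)$ holds for every $\epsilon>0$); that would also work, but you would still need to produce the auxiliary $\hat A_N$ first.
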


The previous theorem together with an approximation result that we prove in Section \ref{ContModelSection} (Proposition \ref{symperasma gia proseggiseis pinakon}) has the following consequence for the model \eqref{ContinuousModel}.
           \begin{cor}\label{theoremconfunction}
Let $(A_N)_{N\in\N^+}$ be a sequence of matrices whose variance profile is given by a continuous function. If $X_0$ has mean zero, variance one, and finite $4+\delta$ moment, then  
\begin{equation} \label{ContProfLimit}
    \lim_{N \rightarrow \infty}\frac{|A_{N}|_{\op}}{\sqrt{N}}=\mu^{\sigma}_{\infty} \ \  \ \ \text{a.s.}
\end{equation}
               \end{cor}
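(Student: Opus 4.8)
The plan is to deduce Corollary \ref{theoremconfunction} from Theorem \ref{theoremstepfunction} via an approximation argument, reducing the continuous variance profile to a sequence of step-function variance profiles that sandwich it. First I would fix $\e>0$ and, using uniform continuity of $\sigma$ on the compact square $[0,1]^2$, choose $m=m(\e)$ large enough and a uniform partition of $[0,1]$ into $m$ intervals so that on each cell of the induced grid partition of $[0,1]^2$ the oscillation of $\sigma$ is at most $\e$. This produces two step functions $\sigma^-_\e\le\sigma\le\sigma^+_\e$ with $\sigma^+_\e-\sigma^-_\e\le 2\e$ pointwise (after a harmless truncation to keep values in $[0,1]$), and correspondingly two step-function random matrix models $A_N^-=\Sigma_N^-\odot A_N'$ and $A_N^+=\Sigma_N^+\odot A_N'$ built on the \emph{same} array $A_N'$ as $A_N$, where $\Sigma_N^\pm$ are the obvious discretizations. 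Theorem \ref{theoremstepfunction} applies to each of these, giving $|A_N^\pm|_{\op}/\sqrt N\to \mu^{\sigma^\pm_\e}_\infty$ almost surely.

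The two remaining ingredients are: (i) a deterministic/operator-norm comparison controlling $|A_N|_{\op}-|A_N^\pm|_{\op}$ in terms of the Hadamard difference of the variance profiles, and (ii) continuity of the map $\sigma\mapsto\mu^\sigma_\infty$ under uniform approximation. For (i) I would invoke Proposition \ref{symperasma gia proseggiseis pinakon} (the approximation result advertised in Section \ref{ContModelSection}): writing $A_N = A_N^- + E_N$ with $E_N = (\Sigma_N-\Sigma_N^-)\odot A_N'$, the perturbation matrix $E_N$ has a variance profile bounded by roughly $(2\e)^2$ entrywise plus the $o(1)$ error from \eqref{ContinuousVPApprox}, so $|E_N|_{\op}/\sqrt N$ is itself (eventually) bounded by something like $C\e$ almost surely — this is exactly the kind of statement Proposition \ref{symperasma gia proseggiseis pinakon} should provide, essentially an application of Theorem \ref{theoremstepfunction} (or its proof) to $E_N$. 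By the triangle inequality for $|\cdot|_{\op}$,
\begin{equation}
\Big|\,\tfrac{|A_N|_{\op}}{\sqrt N}-\tfrac{|A_N^-|_{\op}}{\sqrt N}\,\Big|\le \tfrac{|E_N|_{\op}}{\sqrt N}\le C\e\quad\text{eventually, a.s.}
\end{equation}
For (ii), note $\mu^{\sigma}_\infty = \lim_k \big(\sum_{T\in\mathbf{C}_k} t(T,\sigma^2)\big)^{1/(2k)}$ and that $t(T,\cdot)$ is monotone and Lipschitz on trees with respect to the $L^1$ (hence $L^\infty$) norm of the graphon; since $|\sigma^2-(\sigma^\pm_\e)^2|\le C\e$ uniformly, one gets $|\mu^\sigma_\infty-\mu^{\sigma^\pm_\e}_\infty|\le C'\e$ (the constant being controlled because all profiles are bounded by $1$, so all the relevant supports sit in $[-2,2]$ and the moment-generating-type bounds are uniform).

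Combining: almost surely,
\begin{equation}
\limsup_{N\to\infty}\Big|\tfrac{|A_N|_{\op}}{\sqrt N}-\mu^\sigma_\infty\Big|
\le \limsup_{N\to\infty}\tfrac{|E_N|_{\op}}{\sqrt N} + \lim_{N\to\infty}\Big|\tfrac{|A_N^-|_{\op}}{\sqrt N}-\mu^{\sigma^-_\e}_\infty\Big| + |\mu^{\sigma^-_\e}_\infty-\mu^\sigma_\infty| \le C\e + 0 + C'\e,
\end{equation}
and letting $\e\downarrow0$ along a countable sequence finishes the proof. The main obstacle I anticipate is part (i): making precise and rigorous the claim that a Hadamard perturbation whose variance profile is uniformly $O(\e)$ has operator norm $O(\sqrt N\,\e)$ almost surely, uniformly in $N$. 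This requires applying the machinery behind Theorem \ref{theoremstepfunction} to the perturbation array — which has entries that are $\e$-scaled versions of $X_0$, so the $4+\delta$ moment hypothesis is preserved — and checking that the constant in the resulting bound genuinely scales linearly with $\e$ (equivalently, that $\mu^{\sigma_{E}}_\infty\le C\e$ when the profile $\sigma_E^2\le C\e^2$, which follows from the support containment in $[-2\sqrt{V_0},2\sqrt{V_0}]$ with $V_0=O(\e^2)$). This is presumably the content of Proposition \ref{symperasma gia proseggiseis pinakon}, so the corollary itself is then a short deduction; the care needed is only in bookkeeping the uniformity of all error constants in $\e$ and in $N$, and in handling the extra $o(1)$ discrepancy coming from \eqref{ContinuousVPApprox} between $\Sigma_N$ and the sampled values $\sigma(i/N,j/N)$.
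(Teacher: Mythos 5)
Your overall plan matches the paper's: approximate $\sigma$ by a step function, apply Theorem \ref{theoremstepfunction} to the step model, control the remainder $E_N=(\Sigma_N-\Sigma_N^{(n)})\odot A'_N$ in operator norm, and conclude via Proposition \ref{symperasma gia proseggiseis pinakon}. (The paper uses a single local-averaging discretization $\sigma^{(n)}$ rather than a two-sided sandwich $\sigma^\pm_\e$, but that is a cosmetic difference.) Your part (i) is also in the right spirit, though the phrasing is slightly off: $E_N$ is not itself a step-function model, so you cannot literally apply Theorem \ref{theoremstepfunction} to it; what you actually need — and what Proposition \ref{symperasma gia proseggiseis pinakon} and Lemma \ref{AsConvLemma} provide — is the crude bound $M'_N(t)\le N^{t+1}(2\e)^{2t}$ for the perturbation, i.e.\ condition $\Sigma(2\e)$, which then yields $\limsup_N|E_N|_{\op}/\sqrt N\le 2\e$ a.s.

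The genuine gap is in your part (ii), the claim $|\mu^\sigma_\infty-\mu^{\sigma^\pm_\e}_\infty|\le C'\e$. Lipschitz continuity of $t(T,\cdot)$ in $\|W\|_\infty$ has a constant that grows like $|E(T)|=k$, and the number of trees grows like $4^k$, so $|m_{2k}-m_{2k}^\e|\lesssim k4^k\e$. Taking $2k$-th roots does not tame this: for a profile with small $\mu_\infty$ the perturbation term $k4^k\e$ dominates $m_{2k}$ for every large $k$, and $\limsup_k(m_{2k}^\e)^{1/(2k)}$ can be driven toward $2$ rather than toward $\mu_\infty+O(\e)$. So the moment-level Lipschitz argument, as written, does not deliver the desired continuity of the support edge. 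The paper instead proves this step (its ``Claim'') by a direct operator-norm perturbation argument: since $|A_N-A_N^{(n)}|_{\op}/\sqrt N\to 0$ along a subsequence a.s., the Weyl-type bound $\max_i|\lambda_i-\lambda_i^{(n)}|\le |A_N-A_N^{(n)}|_{\op}/\sqrt N$ yields a uniform shift of the eigenvalue counting functions, $F^{(n)}(a-3\e)\le F(a)\le F^{(n)}(a+3\e)$, and passing to the $N\to\infty$ limits gives $|\mu_\infty-\mu_\infty^{(n)}|\le 3\e$. This is a different and sturdier route than the moment-Lipschitz argument; you would need to replace your (ii) by something of this kind for the proof to close.
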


\begin{rem} 1) 
Theorem \ref{theoremstepfunction} covers the cases in the Wigner matrix model [i. e., $A_N:=(a_{i, j})_{i, j\in[N]}$ with $\{a_{i, j}: 1\le i\le j\le N, N\in\N^+\}$ i.i.d. with $\E(a_{1, 1})=0, \E(a_{1, 1}^2)=1$] where $\E(|a_{1, 1}|^{4+\delta})<\infty$ for some $\delta>0$. Recall that the necessary and sufficient condition for the validity of \eqref{StepProfLimit} in that model is $\E(|a_{1, 1}|^4)<\infty$.

2) Corrolary \ref{theoremconfunction} holds also in the case that the function $\sigma$ of Definition \ref{definition of rm cont} is piecewise continuous in a sense explained in the end of Section \ref{ContModelSection}.

\end{rem}

\section{Analysis of high order moments}\label{highordermoments}

Assume at the moment that the entries of $A_N$ have finite moments of all orders.

We will relate the largest eigenvalue with a high moment of the measure $\mu_N$ and at the same time this moment will be controlled by $\mu_\infty$. In general, for $k\in\N$, it is true that
\begin{equation}\label{geniko athrisma}
    \E \tr (A_N^{2k})= \sum_{i_{1},i_{2},........,i_{2k} \in [N]} \E \left(\prod_{l=1}^{2k}a_{i_{l},i_{l+1}}^{(N)}\right) \end{equation}
with the conventions that $i_{2k+1}=i_{1}$, when $k=0$ the sum is only over $i_1\in[N]$, and the product over an empty set equals 1. 


Now, for a term with indices $i_1, i_2, \ldots, i_{2k}$, we let $\B{i}:=(i_1, i_2, \ldots, i_{2k})$ and $X(\B{i}):=\prod_{l=1}^{2k}a_{i_{l},i_{l+1}}^{(N)}$. For such an $\B{i}$ we also use the term cycle. Then consider the graph $G(\B{i})$ with vertex set
$$V(\B{i})=\{i_1, i_2, \ldots, i_{2k}\}$$ and set of edges
\begin{equation}
\{\{i_r, i_{r+1}\}: r=1, 2, \ldots, 2k \}.
\end{equation}
%

\noindent As explained in \cite{bai2010spectral} (in the proof of relation (3.1.6) there,
pages 49, 50 or in Theorem 3.2 of \cite{zhu2020graphon}), the limit 
$$\lim_{N\to\infty} \frac{1}{N^
{k+1}}\E\tr(A^{2k}) $$
remains the same if in the sum of \eqref{geniko athrisma} we keep only the summands such that 
\begin{equation} \label{treeCondition}
\text{the graph $G(\B{i})$ is a tree with $k+1$ vertices} 
\end{equation}
Then, necessarily, the path $i_1\to i_2 \to \cdots \to i_{2k} \to i_1$ traverses each edge of the tree exactly twice, in opposite
directions of course. 
Such a $G(\B{i})$ becomes an ordered rooted tree if we mark $i_1$ as the root and order children of the same vertex according to the order they appear in the cycle. \\ 
Cycles $\B{i}$ that don't satisfy \eqref{treeCondition} we call bad cycles. So, for $k\in\N$, the sum in \eqref{geniko athrisma} can be written as
\begin{align}\label{xoristo athrisma}
\E \tr(A^{2k})&=M_N(k)+B_N(k),\\
\intertext{where}
M_N(k)&:=\sum_{T\in \mathbf{C}_k}\sum_{\mathbf{i}\in [N]^{(2k)\vee 1}: G(\mathbf{i})\sim T} \prod_{\{i, j\}\in E(G(\mathbf{i}))} s^{(N)}_{i,j}, \label{GoodCycSum} \\
B_N(k)&:=\sum_{\mathbf{i}\in[N]^{2k}: \text{bad cycle}}\E X(\B{i}). \label{oi kakoi oroi}
\end{align}
Recall that $\mathbf{C}_k$ are the ordered rooted trees with $k$ edges and $G(\mathbf{i})\sim T$ means that the graphs are isomorphic as ordered rooted trees. Note also that $M_N(k)$ has already been defined in \eqref{orismos kalon orwn} but the two definitions for it agree. Also, $M_N(0)=N, B_N(0)=0$.\\
The plan is to control the expectation of the trace in \eqref{xoristo athrisma} through an appropriate bound involving various $M_N(j)'s$. To control the term $B_N(k)$, we adopt the analysis of Section 2.3 of \cite{tao2012topics}.  
\begin{prop}\label{protasi gia megalo trace}
  Let $A_{N}$ be an $N\times N$ symmetric random matrix with independent entries (up to symmetry) and with $\E(a_{i, j}^{(N)})=0, s_{i, j}^{(N)}\le1$ for all $N\in\N, i, j\in [N]$. Assume additionally that the absolute value of the entries of the matrix are all supported in $[0,CN^{\frac{1}{2}-\epsilon}]$ for some $\epsilon>0$. Then for all $N$ large enough and all integers $1\le k<N$ it is true that
  \begin{equation}\label{BadCycBnd} |B_{N}(k)|\le \sum_{s=1}^{k} (4k^{5})^{2k-2s} \left(CN^{\frac{1}{2}-\epsilon}\right)^{2k-2s} \sum_{t=1}^{(s+1)\wedge k}(4k^4)^{4(s+1-t)}M_{N}(t-1).\end{equation}
\end{prop}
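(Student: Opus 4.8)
The plan is to expand $B_N(k)$ over cycles, discard the terms that vanish by independence, bound the surviving expectations termwise, and then regroup the sum by the combinatorial type of the cycle so that a F\"uredi--Koml\'os type count reproduces the right-hand side of \eqref{BadCycBnd}. First, for a cycle $\B{i}=(i_1,\dots,i_{2k})$ let $m_e$ be the number of steps of the closed path $i_1\to\cdots\to i_{2k}\to i_1$ crossing an edge $e$ of $G(\B{i})$, so $X(\B{i})=\prod_e (a^{(N)}_e)^{m_e}$ with $a^{(N)}_e$ the single entry (recall symmetry) attached to $e$. As the $a^{(N)}_e$ are independent and centred, $\E X(\B{i})=\prod_e\E[(a^{(N)}_e)^{m_e}]$ vanishes unless every $m_e\ge2$, so I keep only such cycles; then the number of distinct edges $s:=|E(G(\B{i}))|$ satisfies $s\le\tfrac12\sum_e m_e=k$, the graph $G(\B{i})$ is connected and has $t:=|V(G(\B{i}))|\le s+1$ vertices, and if $t=k+1$ it must be a tree on $k+1$ vertices, i.e.\ a good cycle, so a surviving \emph{bad} cycle has $1\le t\le(s+1)\wedge k$. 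Using $|a^{(N)}_e|\le CN^{1/2-\e}$ a.s.\ and $\E|a^{(N)}_e|^2=s^{(N)}_e\le1$ gives $\E|a^{(N)}_e|^{m_e}\le(CN^{1/2-\e})^{m_e-2}s^{(N)}_e$, hence
\begin{equation*}
|\E X(\B{i})|\;\le\;(CN^{1/2-\e})^{\,2k-2s}\!\!\prod_{e\in E(G(\B{i}))}\!\! s^{(N)}_e\;\le\;(CN^{1/2-\e})^{\,2k-2s}\!\!\prod_{e\in E(\tau(\B{i}))}\!\! s^{(N)}_e,
\end{equation*}
where $\tau(\B{i})$ is the first-visit spanning tree of $G(\B{i})$ (the edges created by the innovative steps, rooted at $i_1$, children ordered by first appearance, hence an element of $\mathbf{C}_{t-1}$ after forgetting labels); the last step discards the $s-(t-1)$ surplus edges using $0\le s^{(N)}_e\le1$.

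Next I would encode each cycle $\B{i}$ as a pair $(\mathcal S,\varphi)$, where the \emph{shape} $\mathcal S$ is obtained by renaming the distinct values of $\B{i}$ as $1,\dots,t$ in order of first appearance and $\varphi\colon\{1,\dots,t\}\hookrightarrow[N]$ records which value is which, so that $\prod_{e\in E(\tau(\B{i}))}s^{(N)}_e=\prod_{e\in E(\tau(\mathcal S))}s^{(N)}_{\varphi(e)}$ with $\tau(\mathcal S)\in\mathbf{C}_{t-1}$. The quantitative core is a bound, \emph{uniform} over the choice of $\tau\in\mathbf{C}_{t-1}$, on the number $\mathcal N(t,s,k)$ of shapes $\mathcal S$ of length $2k$ with $t$ vertices, $s$ edges and first-visit tree $\tau$. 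Realising such an $\mathcal S$ as the canonical depth-first traversal of $\tau$ (a word of length $2(t-1)$ determined by $\tau$) into which closed excursions are inserted at the $2t-1$ successive vertex-visits, of total length $Q:=2k-2(t-1)=(2k-2s)+2(s+1-t)$, I bound the number of such shapes by distributing $Q$ among the visits and choosing each excursion among closed walks on $\le t\le k$ vertices (any fresh vertex-pair an excursion uses being one of the $s-(t-1)$ surplus edges):
\begin{equation*}
\mathcal N(t,s,k)\;\le\;\binom{2k}{Q}\,k^{Q}\;\le\;(2k^2)^{Q}\;\le\;(4k^5)^{\,2k-2s}\,(4k^4)^{\,4(s+1-t)},
\end{equation*}
the last inequality being a crude estimate valid for all $k\ge1$. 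Making the ``depth-first plus excursions'' decomposition precise is the one genuinely delicate point, and is exactly the combinatorial analysis of Section~2.3 of \cite{tao2012topics}.

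Finally I would sum the termwise bound over the surviving bad cycles, grouped by $(s,t)$, then by $\tau\in\mathbf{C}_{t-1}$, then by the shapes $\mathcal S$ with $\tau(\mathcal S)=\tau$ and by the labellings $\varphi$. For fixed $(s,t)$ and $\tau$ the product $\prod_{e\in E(\tau)}s^{(N)}_{\varphi(e)}$ depends only on $\tau$ and $\varphi$, so the sum over $\mathcal S$ contributes the factor $\mathcal N(t,s,k)\le(2k^2)^{Q}$; summing over $\tau\in\mathbf{C}_{t-1}$ the remaining $\sum_{\varphi}\prod_{e\in E(\tau)}s^{(N)}_{\varphi(e)}$ reassembles $M_N(t-1)$ by the very definition \eqref{orismos kalon orwn}. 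Hence
\begin{equation*}
|B_N(k)|\;\le\;\sum_{s=1}^{k}\;\sum_{t=1}^{(s+1)\wedge k}(CN^{1/2-\e})^{2k-2s}\,(2k^2)^{Q}\,M_N(t-1),
\end{equation*}
and bounding $(2k^2)^{Q}=(2k^2)^{(2k-2s)+2(s+1-t)}\le(4k^5)^{2k-2s}(4k^4)^{4(s+1-t)}$ and pulling the $t$-independent factor $(4k^5)^{2k-2s}$ out of the inner sum produces exactly \eqref{BadCycBnd}. The remaining points are routine: $N$ large and $1\le k<N$ ensure every tree with at most $k+1$ vertices admits injective labellings in $[N]$ and that the crude inequalities above hold, and self-loops coming from diagonal entries need no separate treatment, being also centred with variance $\le 1$.

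The step I expect to be the real obstacle is the second one: stating precisely that a bad cycle is the depth-first search of its first-visit tree with short excursions inserted, and verifying that this keeps the shape count at $(2k^2)^{Q}$ with $Q=(2k-2s)+2(s+1-t)$. This is the F\"uredi--Koml\'os argument adapted to the variance-profile setting; everything else in the proof is bookkeeping.
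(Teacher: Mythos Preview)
Your overall strategy---bound $|\E X(\B{i})|$ via the first-visit spanning tree, encode cycles as (shape, labelling), and regroup so that the sum over labellings reproduces $M_N(t-1)$---is exactly the paper's. The bookkeeping you describe outside the shape count is correct.

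The gap is in the shape count. The decomposition ``DFS of $\tau$ with closed excursions inserted at the $2t-1$ vertex-visits'' is \emph{false} in general, so your bound $\mathcal N(t,s,k)\le(2k^2)^{Q}$ is not established. Concretely, take $k=4$ and the closed walk
\[
1\to 2\to 3\to 1\to 2\to 3\to 1\to 2\to 1.
\]
Here $t=3$, $s=3$, all edge multiplicities are $\ge 2$, and the first-visit tree is the path $1\!-\!2\!-\!3$ with DFS $1\to2\to3\to2\to1$. But the walk never contains the step $3\to2$, so it cannot be written as that DFS with closed excursions inserted. Your deferral to \cite{tao2012topics} does not close this: Tao's Section~2.3 is for the Wigner case and counts shapes with a given number of vertices, not shapes with a \emph{prescribed first-visit tree}, and the latter is precisely what you need to reassemble $M_N(t-1)$ in the variance-profile setting.

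The paper replaces the DFS-plus-excursions picture by the Bai--Silverstein $T_1$--$T_4$ leg classification (Lemma~\ref{CycleCountLemma}). The key observation is that a shape with first-visit tree $T$ is determined by $T$ together with the positions and endpoints of its $T_4$ legs and its \emph{regular} $T_3$ legs; one then shows that there are at most $4(s+1-t)+2(k-s)$ such legs, giving the count $(4k^4)^{4(s+1-t)+2(k-s)}$ per multiplicity vector $(a_1,\dots,a_s)$. Summing over the $\le k^{2(k-s)}$ multiplicity vectors yields the factor $(4k^5)^{2(k-s)}(4k^4)^{4(s+1-t)}$ in \eqref{BadCycBnd}. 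Note this is looser than the $(2k^2)^Q$ you claim; whether your tighter bound is actually true is a separate question, but the argument you offer does not prove it.
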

\begin{proof} We bound each term of the sum defining $B_N(k)$. Take a bad cycle $\B{i}$ and let
\begin{itemize}
\item t: the number of vertices of $G(\B{i})$,
\item $s$: the number of the edges of $G(\B{i})$,
\item $e_1, e_2, \ldots, e_s$: the edges of $G(\B{i})$ in order of appearance in the cycle,
\item $a_1, a_2, \ldots, a_s$:  the multiplicities of $e_1, e_2, \ldots, e_s$ in the cycle. 
\end{itemize}
That is, $a_q$ is the number of times the (undirected) edge $e_q$ appears in the cycle. Note that $t\le s+1$ (true for all graphs) and $t\le k$ because the cycle is bad.

Additionally, in case $t\ge2$, we let $T(\B{i})$ be the rooted ordered tree obtained from $G(\B{i})$ by keeping only edges that lead to a new vertex at the time of their appearance in the cycle. The root is $i_1$ and we declare a child of a vertex smaller than another if it appears earlier in the cycle. In case $t=1$, $T(\B{i})$ is the graph with one vertex, $i_1$, and one edge (loop) with end vertices $i_1, i_1$. Thus, $T(\B{i})$ has $t$ vertices and $1\vee(t-1)$ edges. 

To bound $|\E X(\B{i})|$, notice that if any of $a_1, a_2, \ldots, a_s$ is 1, we have $\E X(\B{i})=0$ by the independence of the elements of $A_N$ and the zero 
mean assumption. We assume therefore that all multiplicities are at least 2. Using the information about the mean, variance, and support of $|a_{i, j}^{(N)}|$, we get that for any integer $a\ge 2$ it holds 
$\E(|a_{i, j}^{(N)}|^a)\le 
(C_1 N ^{1/2-\epsilon})^{a-2} 
s_{i, j}^{(N)}.$ Thus 
\begin{equation}\E|X(\B{i})|= \prod_{q=1}^s \E |X_{e_q}|^{a_q} \le (C N^{1/2-\e})^{a_1+\cdots+a_s-2s}\prod_{\{i, j\}\in E(G(\B{i}))} s_{i, j}^{(N)}\le (C N^{1/2-\e})^{2k-2s}\prod_{\{i, j\}\in E(T(\B{i}))} s_{i, j}^{(N)}.
\end{equation}
In the second inequality, we used the fact that $s_{i, j}^{(N)}\in[0, 1]$ for all $i, j, N$.
For integers $s, t\ge1, a_1, \ldots, a_s\ge2$ and $T\in \mathbf{C}_{t-1}$ let 
\begin{equation} \label{SpecBadCycles}
N_{T, a_1, a_2, \ldots, a_s}=\begin{array}{l} \text{the number of bad cycles with $T(\B{i})\sim T$, indices $1, 2, \ldots, t$, appearing in this order,} \\ \text{and edge multiplicities } a_1, a_2, \ldots, a_s.
\end{array}
\end{equation}
Using the bound on $N_{T, a_1, a_2, \ldots, a_s}$ provided by Lemma \ref{CycleCountLemma}, we obtain
\begin{align} \label{FirstGraphBound}
|B_N(k)|
&\le \sum_{s=1}^k \sum_{t=1}^{k\wedge(s+1)}     \sum_{a_1, a_2, \ldots, a_s}  (C N^{1/2-\e})^{2k-2s} \left\{\mathbf{1}_{t=1}\sum_{i\in [N]} s_{i, i}^{(N)}+\mathbf{1}_{t\ge2} \sum_{T\in \B{C}_{t-1}} N_{T, a_1, a_2, \ldots, a_s}  \sum_{\B{i}\in [N]^{2k}:T(\B{i})\sim T} \prod_{\{i, j\}\in E(T(\B{i}))} s_{i, j}^{(N)}\right\}\\
&\le \sum_{s=1}^k \sum_{t=1}^{k\wedge(s+1)}     \sum_{a_1, a_2, \ldots, a_s}  (C N^{1/2-\e})^{2k-2s} \left\{\mathbf{1}_{t=1}\sum_{i\in [N]} s_{i, i}^{(N)}+\mathbf{1}_{t\ge2} \sum_{T\in \B{C}_{t-1}} N_{T, a_1, a_2, \ldots, a_s}  \sum_{\B{i}\in [N]^{2(t-1)}:T(\B{i})\sim T} \prod_{\{i, j\}\in E(T(\B{i}))} s_{i, j}^{(N)}\right\}\\
& \le \sum_{s=1}^k \sum_{t=1}^{k\wedge(s+1)}   \sum_{a_1, a_2, \ldots, a_s} (C N^{1/2-\e})^{2k-2s}   (4k^4)^{4(s+1-t)+2(k-s)} M_N(t-1). \label{FirstGraphBound3}
\end{align}
We used here the fact that $s_{i, i}^{(N)}\le1$, so that $\sum_{i\in [N]} s_{i, i}^{(N)}\le N=M_N(0)$.
The inside sum in \eqref{FirstGraphBound3} is over all $s$-tuples of integers $a_1, a_2, \ldots, a_s$ greater than or equal to 2 with sum $2k$. By subtracting 2 from each $a_i$, we get an $s$-tuple of non-negative integers with sum $2k-2s$.  The number of such $s$-tuples is $\multiset{s}{2k-2s}$(combinations with repetition), which is at most $s^{2(k-s)}\le k^{2(k-s)}$. Thus the above sum is bounded by
\begin{equation}
    \sum_{s=1}^k (4k^5)^{2(k-s)} (C N^{1/2-\e})^{2k-2s} \sum_{t=1}^{k\wedge(s+1)}    (4k^4)^{4(s+1-t)}  M_N(t-1).
\end{equation}
\end{proof}

\begin{prop} \label{PropSRCondition}  Let $(A_{N})_{N\in\N^+}$ be a sequence of symmetric matrices and $R>0$ so that the sequence satisfies Assumption \ref{Genikes ypotheseis} and the following condition $\Sigma(R)$: \\
For each $C_1>0$ there are $C_2>0$ and $N_0\in\N^+$ such that
\begin{equation} \label{SRCondition}
    M_N(k)\le C_2 N^{k+1} R^{2k} 
\end{equation}
for all $N, k\in\N^+$ with $N\ge N_0$ and $1\le k\le C_1 \log N$. \\
Then for each $\epsilon>0$, it holds
\begin{equation}    
\lim_{N\to\infty} \mathbb{P}\left(\frac{|A_{N}|_{\op}}{\sqrt{N}}\geq R (1+\epsilon)\right)=0.
\end{equation}
\end{prop}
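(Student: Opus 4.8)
The plan is to bound the operator norm by a high moment of the ESD and then use Markov's inequality. Since $A_N$ is symmetric, $|A_N|_{\op}^{2k} = \lambda_{\max}(A_N^2)^k \le \tr(A_N^{2k})$, so for any $k$,
\begin{equation*}
\PP\left(\frac{|A_N|_{\op}}{\sqrt N} \ge R(1+\e)\right) \le \frac{\E\tr(A_N^{2k})}{N^{k+1}\,R^{2k}(1+\e)^{2k}}.
\end{equation*}
The first technical issue is that Assumption \ref{Genikes ypotheseis} only gives a finite fourth moment, whereas the moment method needs higher moments under control. So the first step is a \emph{truncation}: replace $A_N$ by $\tilde A_N$ whose entries are $a_{i,j}^{(N)}\mathbf 1_{|a_{i,j}^{(N)}|\le CN^{1/2-\e'}}$ (re-centered), for a small $\e'>0$. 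Using condition \eqref{MaxToZero} one shows $\PP(A_N\ne \tilde A_N)\to 0$, and the recentering/rescaling perturbs the variances negligibly, so it suffices to prove the bound for the truncated matrix; the truncated entries satisfy the hypotheses of Proposition \ref{protasi gia megalo trace}, and their variances are still $\le 1$ (up to a factor $1+o(1)$ which is harmless). I will henceforth write $A_N$ for the truncated matrix.

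Next, apply the decomposition \eqref{xoristo athrisma}: $\E\tr(A_N^{2k}) = M_N(k) + B_N(k)$. The condition $\Sigma(R)$ controls $M_N(k)$ directly: for $k\le C_1\log N$ we get $M_N(k)\le C_2 N^{k+1}R^{2k}$, so the contribution of $M_N(k)$ to the probability above is $\le C_2(1+\e)^{-2k}$, which $\to 0$ provided $k\to\infty$. For $B_N(k)$, I invoke Proposition \ref{protasi gia megalo trace}:
\begin{equation*}
|B_N(k)|\le \sum_{s=1}^{k}(4k^5)^{2k-2s}(CN^{1/2-\e})^{2k-2s}\sum_{t=1}^{(s+1)\wedge k}(4k^4)^{4(s+1-t)}M_N(t-1),
\end{equation*}
and then bound each $M_N(t-1)$ for $t-1\le k-1 < k \le C_1\log N$ again by $\Sigma(R)$, i.e. $M_N(t-1)\le C_2 N^{t}R^{2t-2}$. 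The key point is the power of $N$: in the summand we get $N^{(1/2-\e)(2k-2s)}\cdot N^{t}\le N^{(1-2\e)(k-s)+s+1}$ (using $t\le s+1$), versus the $N^{k+1}$ in the denominator. This leaves a factor $N^{-2\e(k-s)}$, which for $s<k$ beats all the polynomial-in-$k$ prefactors $(4k^5)^{2k-2s}$, $C^{2k-2s}$, $(4k^4)^{4(s+1-t)}$ as long as $k$ grows slowly enough — say $k = k_N = \lfloor \alpha\log N\rfloor$ for a suitable small constant $\alpha$ (this is exactly why the hypothesis restricts to $k\le C_1\log N$, and why $\Sigma(R)$ is stated uniformly in that range). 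The term $s=k$ contributes $\sum_{t\le k}(4k^4)^{4(k+1-t)}M_N(t-1)$; here $t\le k+1$ but the bad-cycle constraint forces $t\le k$, and $M_N(t-1)\le C_2 N^t R^{2t-2}\le C_2 N^k R^{2k-2}$, giving after division by $N^{k+1}R^{2k}$ a factor $O(N^{-1}\mathrm{poly}(k))\to 0$. Summing the geometric-type series over $s$ and $t$, one concludes $B_N(k_N)/(N^{k_N+1}R^{2k_N}(1+\e)^{2k_N})\to 0$.

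Putting the two pieces together with $k=k_N\to\infty$ yields $\PP(|A_N|_{\op}/\sqrt N\ge R(1+\e))\to 0$, as desired. The main obstacle is the bookkeeping in the $B_N(k)$ estimate: one must choose the growth rate of $k_N$ carefully so that, simultaneously, (i) $k_N\le C_1\log N$ so $\Sigma(R)$ applies, (ii) $(1+\e)^{-2k_N}\to 0$ so the good-term contribution vanishes, and (iii) the surviving negative power $N^{-2\e(k_N-s)}$ dominates the super-exponential-in-$k_N$ combinatorial prefactors for every $s<k_N$. A clean way to handle (iii) is to note that for $s\le k_N-1$ the prefactor is at most $\exp(O(k_N\log k_N)) = \exp(O(\log N\cdot\log\log N))$ while $N^{2\e}=\exp(2\e\log N)$ dominates term-by-term, and the number of $(s,t)$ pairs is only $O(k_N^2)$; so no delicate cancellation is needed, just a uniform crude bound. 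A minor point to check at the end is that the $o(1)$ distortion of the variances from truncation does not spoil condition $\Sigma(R)$ — it only multiplies $M_N(k)$ by $(1+o(1))^k$, which is absorbed by enlarging $C_2$ and, if necessary, replacing $R$ by $R(1+\e/2)$ before the final step.
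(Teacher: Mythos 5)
Your overall strategy matches the paper's: truncate, pass to $\E\tr(A_N^{2k})$, split into good cycles (bounded via $\Sigma(R)$) and bad cycles (bounded via Proposition~\ref{protasi gia megalo trace}), then take $k\sim C_1\log N$ and apply Markov. Your bookkeeping for $B_N(k)$ is essentially correct, and your observation that truncation only decreases variances (so $\Sigma(R)$ is inherited and no replacement $R\mapsto R(1+\e/2)$ is needed) is also fine.

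The genuine gap is in your treatment of the truncation. You assert that ``using condition \eqref{MaxToZero} one shows $\PP(A_N\ne\tilde A_N)\to 0$'' when truncating at level $CN^{1/2-\e'}$. This is not implied by Assumption~\ref{Genikes ypotheseis}: condition \eqref{MaxToZero} only says $\sum_{i,j}\PP(|a_{i,j}^{(N)}|\ge \e\sqrt N)\to 0$ for each \emph{fixed} $\e>0$, and $CN^{1/2-\e'}/\sqrt N = CN^{-\e'}\to 0$, so you cannot invoke \eqref{MaxToZero} at this lower threshold. Worse, under only a fourth-moment bound, a union bound over the $\sim N^2$ entries gives $\PP(\exists\,i,j: |a_{i,j}^{(N)}|>CN^{1/2-\e'})\lesssim N^2\cdot N^{-(2-4\e')}=N^{4\e'}$, which diverges; so the claim can actually fail. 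The paper sidesteps this by \emph{not} claiming that $A_N^{>}$ (the part above $N^{1/2-\eta}$) vanishes with high probability. Instead it writes $A_N=(A_N^{\le}-\E A_N^{\le})+\E A_N^{\le}+A_N^{>}$ and shows $|A_N^{>}|_{\op}/\sqrt N\to 0$ in probability by a two-step argument: first, $\PP(\exists\,i,j: |a_{i,j}^{(N)}|>\delta_1\sqrt N)\to 0$ by \eqref{MaxToZero}; second, on the complement, $A_N^{>}$ has entries of size at most $\delta_1\sqrt N$, and with high probability (controlled by the fourth moment) no row or column of $A_N^{>}$ contains two nonzero entries, so that $|A_N^{>}|_{\op}\le\max_{i,j}|(A_N^{>})_{i,j}|\le\delta_1\sqrt N$. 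Without this sparsity observation, the proof as you wrote it does not go through; you would need to replace your one-line claim with an argument of this type (or an equivalent one) before the moment method can legitimately be applied to the truncated matrix alone.
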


\begin{proof}
Fix $\eta\in(0, 1/8)$ and define the $N\times N$ matrices $A_N^{\le}, A_N^{>}$ by
\begin{align}
(A_N^{\le})_{i, j}:=a^{(N)}_{i,j}1_{|a_{i,j}|\leq N^{\frac{1}{2}-\eta}},\\
(A_N^{>})_{i, j}:=a^{(N)}_{i,j}1_{|a_{i,j}|>N^{\frac{1}{2}-\eta}}
\end{align}
for all $i, j\in[N]$. For a random matrix $H:=(h_{i, j})$, $\E H$ denotes the matrix whose $(i, j)$ element is $\E h_{i, j}$ provided that the mean value of $h_{i, j}$ can be defined. Note that
    \begin{equation}\label{anisotita weyl gia mikra mesi timi kai megala}
\frac{1}{\sqrt{N}}|A_{N}|_{\op} \leq \frac{1}{\sqrt{N}} \left( |A_{N}^{\le}-\E A_{N}^{\le}|_{\op}+ |\E A_{N}^{\le}|_{\op}+ |A_{N}^{>}|_{\op}\right).
\end{equation}
We will bound the three terms in the right hand side of the last inequality. For the first two, we use only Assumption \ref{Genikes ypotheseis} and the arguments in the proof of Theorem 2.3.23 in \cite{tao2012topics}.

1) The term $N^{-\frac{1}{2}}|\E (A^{\leq }_{N})|_{\op}$ is a deterministic sequence that converges to 0 because, since $a_{i, j}^{(N)}$ is centered, we have
\begin{equation}\label{Deterministicto0SR}
       |(\E A^{\leq}_{N})_{i,j}|= |(\E A^{>}_{N})_{i.j}|\leq N^{-3 (\frac{1}{2}-\eta)} \sup_{N}\max_{i,j} \E |a_{i,j}^{(N)}|^{4}.
\end{equation}
And using the inequality $|C|_{\op}\le N ||C||_{\max}$, we get that 
$$|\E (A^{\leq }_{N})|_{\op} \le N^{3\eta-\frac{1}{2}} \sup_{N}\max_{i,j} \E |a_{i,j}^{(N)}|^{4}\overset{N\to\infty}{\to}0.$$

2) The term $N^{-\frac{1}{2}} |A^{>}_{N}|_{\op}$ converges to $0$ in probability. Indeed, for any $\delta_1>0$, 
\begin{align}
\PP(|A^{>}_{N}|_{\op}>\delta_1 \sqrt{N})\le &\PP(|a_{i,j}^{(N)}| > \delta_1 \sqrt{N} \text{ for some } i,j \in [N])\\+&\PP(|A^{>}_{N}|_{\op}>\delta_1 \sqrt{N}\text{ and } |a_{i,j}^{(N)}| \le \delta_1 \sqrt{N} \text{ for all } i,j \in [N]). \label{opnormbound2}
\end{align}
The first quantity goes to zero as $N\to\infty$ because of \eqref{MaxToZero}. For the second, it is an easy exercise to show that if each entry of a matrix $M$ has absolute value at most $a$ and each row and column of $M$ has at most one non-zero element then $|M|_{\op}\le a$ (use the expression $|M|_{\op}=\sup_{x:||x||_2=1} ||Mx||_2$). Consequently, the probability in \eqref{opnormbound2} is at most
\begin{align}
&\sum_{i=1}^N\sum_{1\le j_1<j_2 \le N} \PP(|a_{i, j_1}|>N^{\frac{1}{2}-\eta}, |a_{i, j_2}|>N^{\frac{1}{2}-\eta})+\sum_{j=1}^N\sum_{1\le i_1<i_2 \le N} \PP(|a_{i_1, j}|>N^{\frac{1}{2}-\eta}, |a_{i_2, j}|>N^{\frac{1}{2}-\eta}) \\ \le & 
2N \binom{N}{2}\frac{\sup_{N\in\N^+, i, j\in[N]}(\E\{|a_{i, j}^{(N)}|^4\})^2}{N^{4-8\eta}} \le \frac{1}{N^{1-8\eta}} \sup_{N\in\N^+, i, j\in[N]}(\E\{|a_{i, j}^{(N)}|^4\})^2 \overset{N\to\infty}{\to}0.
\end{align}
We used the independence of the entries in each row or column and Markov's inequality.

3) To deal with $|A_{N}^{\le}-\E A_{N}^{\le}|_{\op}$, we will use Proposition \ref{protasi gia megalo trace}. Let 
\begin{align}\label{orismos tou trunctated variance}
\Tilde{A}_{N}&:=A_{N}^{\le}- \E A_{N}^{\le},\\
s_{i,j}^{(N), \leq}&:=
\E\{(\Tilde A_{N})_{i,j}^{2}\}.
    \end{align}
Proposition \ref{protasi gia megalo trace} applies to $\Tilde{A}_N$ because any element of the matrix, say $(\Tilde A_N)_{i, j}$, has zero mean and variance $s_{i,j}^{(N), \leq}\le \E\{(A_N^{\le})^2\}\le \E(A_N^2)=s_{i,j}^{(N)}\le 1$. Thus, if we denote by $\Tilde{M}_{N}(m)$ the terms \eqref{orismos kalon orwn} for $m \in [N]$ and for the matrix $\Tilde{A}_{N}$, we will have $\Tilde{M}_{N}(m)\le {M}_{N}(m)$ for all $m\in \N$, and Proposition \ref{protasi gia megalo trace}, gives that for any $1\le k<N$,
\begin{equation}\E \tr(\Tilde{A}_{N}^{2k})\leq M_{N}(k)+ \sum_{s=1}^{k} (4k^{5})^{2k-2s} \left(2N^{\frac{1}{2}-\eta}\right)^{2k-2s} \sum_{t=1}^{(s+1)\wedge k}(4k^4)^{4(s+1-t)}M_{N}(t-1)\end{equation}
Now fix $C_1>0$, its value will be determined in \eqref{C1Choice} below. For $1\le k\le C_1 \log N$,
\begin{align} \label{TraceOfTrunc}\E \tr(\Tilde{A}_{N}^{2k})\leq C_2 N^{k+1} R^{2k} + C_2 \sum_{s=1}^{k} (4k^{5})^{2k-2s} \left(2N^{\frac{1}{2}-\eta}\right)^{2k-2s} \sum_{t=1}^{(s+1)\wedge k}(4k^4)^{4(s+1-t)} N^t R^{2(t-1)}.
\end{align}
 Next, we focus on the second summand in the right hand side of the previous inequality for $N$ large enough. In the sum in $t$ we factor out $(4k^4)^{4(s+1)} R^{-2}$, and in the resulting sum of geometric progression with ratio $a$ larger than 1 we use the bound $a+a^2+\cdots+a^{(s+1)\wedge k}\le k a^{(s+1)\wedge k}$. Thus the sum in \eqref{TraceOfTrunc} is bounded by
    \begin{align}
    &C_2 \frac{k}{R^2}\sum_{s=1}^{k} (4k^5)^{2k-2s} \left(2N^{\frac{1}{2}-\eta}\right)^{2k-2s} (4k^4)^{4(s+1)}\left(\frac{N R^2}{(4k^{4})^4}\right)^{(s+1)\wedge k}\\
&=2^8C_2 k^{17}N^k(R^{2})^{k-1}+C_2 \frac{k}{R^{2}} (N R^2)^{k+1} \sum_{s=1}^{k-1}\left(\frac{4 (4k^5)^2}{N^{2\eta} R^2}\right)^{k-s} \label{secondLine}\\
&\le 2^8 C_2 k^{17}N^{k}(R^{2})^{k-1}+2^7 C_2 k^{11} (R^2)^{(k-1)} N^{k+1-2\eta}\le 2^9 C_2 k^{17} N^{k+1-2\eta} R^{2k-2}
       \end{align}
[in summing the geometric series in \eqref{secondLine}, we used the bound $c+c^2+\cdots+c^r\le 2c$ if $0\le c<1/2$]. Thus, returning to \eqref{TraceOfTrunc},
\begin{equation}\label{ExpTraceBound}
    \E \tr (\Tilde{A}_{N}^{2k})\leq N^{k+1} R^{2k}\{1+o(1)\}^{2k}.
    \end{equation}
with $o(1)$ depending on $R, C_2, \eta$. \\
Fix $\epsilon>0$, pick 
\begin{equation} \label{C1Choice}
C_1> \frac{2+\epsilon}{\log(1+\epsilon)},
\end{equation}
and apply the above for $k:=[C_1 \log N]$. Relation \eqref{ExpTraceBound} implies
\begin{equation} \label{boundonthetruncSR}
\begin{aligned}
\mathbb{P}\left(\frac{|\Tilde{A}_{N}|_{\op}}{\sqrt{N}}\geq R(1+\epsilon)\right) &\le \mathbb{P}\left( \frac{|\Tilde{A}_{N}|^{2k}_{\op}}{N^{k}}\geq R^{2k}(1+\epsilon)^{2k}\right)\leq \frac{1}{R^{2k}(1+\epsilon)^{2k}} \frac{1}{N^{k}} \E|\Tilde{A}_{N}|_{\op}^{2k} \\
&\leq N \left(\frac{1+o(1)}{1+\epsilon}\right)^{2k}=O\left(\frac{1}{N^{1+\epsilon}}\right),
\end{aligned}
\end{equation}
for any $N$ large enough. The last equality is true because of the choice of $k$ and $C_1$.
\end{proof}

A tool for proving almost sure convergence of the sequence $|A_{N}|_{\op}/\sqrt{N}$ is the following lemma.

\begin{lem} \label{AsConvLemma}
Let $(A_{N})_{N\in\N^+}$ be a sequence of matrices, $A_N$ is $N\times N$, and $R>0$ so that the sequence satisfies Assumption \ref{Genikes ypotheseis}(a), condition $\Sigma(R)$, and Assumption \ref{assumfora.s.}. Then 
\begin{equation}
  \limsup_{N\to\infty} \frac{|A_{N}|_{\op}}{\sqrt{N}}\leq R   \, \text{ a.s.}
\end{equation}
\end{lem}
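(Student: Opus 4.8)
The plan is to upgrade the in-probability bound from Proposition \ref{PropSRCondition} to an almost sure statement via a Borel--Cantelli argument along a suitably chosen (possibly sparse) subsequence, together with a monotonicity-type control filling the gaps. First I would observe that under Assumption \ref{assumfora.s.}, the entries of $A_N$ are stochastically dominated by a fixed $X$ with finite $4+\delta$ moment, so that the truncation performed in the proof of Proposition \ref{PropSRCondition} loses only a controlled amount: define $A_N^{\le}, A_N^{>}$ by cutting at level $N^{1/2-\eta}$ as before. The key new input is that the extra $\delta$ moment makes the ``large part'' $A_N^{>}$ negligible \emph{almost surely}, not merely in probability. Indeed, $\PP(|a_{i,j}^{(N)}|>N^{1/2-\eta})\le \PP(|X|>N^{1/2-\eta})\le \E|X|^{4+\delta}/N^{(1/2-\eta)(4+\delta)}$, so $\sum_N N^2 \PP(|X|>N^{1/2-\eta})<\infty$ once $\eta$ is small enough (precisely $(1/2-\eta)(4+\delta)>3$), and Borel--Cantelli gives that for all large $N$ no entry exceeds $N^{1/2-\eta}$; on that event $A_N^{>}=0$ and $A_N=A_N^{\le}$. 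The deterministic term $N^{-1/2}|\E A_N^{\le}|_{\op}\to 0$ exactly as in step 1 of Proposition \ref{PropSRCondition}.

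Next I would handle the centered truncated matrix $\tilde A_N:=A_N^{\le}-\E A_N^{\le}$, for which condition $\Sigma(R)$ and the moment bound \eqref{ExpTraceBound} give $\E\tr(\tilde A_N^{2k})\le N^{k+1}R^{2k}\{1+o(1)\}^{2k}$ for $k$ up to order $\log N$. Fix $\e>0$ and set $k=k_N:=[C_1\log N]$ with $C_1$ chosen as in \eqref{C1Choice}, but now demanding the slightly stronger inequality $C_1>3/\log(1+\e)$ (or any constant making the exponent below exceed $1$, rather than merely positive); then Markov's inequality applied to $|\tilde A_N|_{\op}^{2k}\le \tr(\tilde A_N^{2k})$ yields, as in \eqref{boundonthetruncSR},
\begin{equation}
\PP\!\left(\frac{|\tilde A_N|_{\op}}{\sqrt N}\ge R(1+\e)\right)\le N\left(\frac{1+o(1)}{1+\e}\right)^{2k_N}=O\!\left(\frac{1}{N^{1+\e'}}\right)
\end{equation}
for some $\e'>0$, which is summable in $N$. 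Borel--Cantelli then gives $\limsup_N N^{-1/2}|\tilde A_N|_{\op}\le R(1+\e)$ almost surely, and since $\e>0$ is arbitrary (intersect over a countable sequence $\e\downarrow 0$), $\limsup_N N^{-1/2}|\tilde A_N|_{\op}\le R$ a.s. Combining the three pieces through the triangle inequality \eqref{anisotita weyl gia mikra mesi timi kai megala}, which on the Borel--Cantelli event reads $N^{-1/2}|A_N|_{\op}\le N^{-1/2}|\tilde A_N|_{\op}+N^{-1/2}|\E A_N^{\le}|_{\op}$, we conclude $\limsup_N N^{-1/2}|A_N|_{\op}\le R$ a.s.

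I do not expect to need a separate interpolation over the gaps between subsequence indices here, because the trace bound is available for \emph{every} $N$ (not just powers of two), so the summability $\sum_N O(N^{-1-\e'})<\infty$ directly covers the full sequence; the stochastic domination hypothesis is precisely what lets the tail-of-entries estimate also run over the full sequence. The main obstacle, and the place requiring care, is the bookkeeping of constants: one must verify that with the extra $\delta$ moment the exponent in the tail probability for the discarded entries genuinely exceeds $3$ (so the union bound over the $\sim N^2$ entries is summable), and simultaneously that $\eta$ can be taken small enough for \eqref{ExpTraceBound} to remain valid while still satisfying $(1/2-\eta)(4+\delta)>3$; a short computation shows any $\eta<\delta/(2(4+\delta))$ works, and this is compatible with the constraint $\eta\in(0,1/8)$ from Proposition \ref{PropSRCondition} provided $\delta$ is, say, at most a fixed constant, and otherwise one simply shrinks $\delta$. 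Everything else is a repetition of the estimates already carried out in the proof of Proposition \ref{PropSRCondition}.
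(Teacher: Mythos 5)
There is a genuine gap in your treatment of the ``large part'' $A_N^>$, and it is precisely the step where you declare the dyadic-blocking machinery unnecessary. You apply Borel--Cantelli to the events $E_N := \{\exists\, i,j\le N: |a^{(N)}_{i,j}|>N^{1/2-\eta}\}$ over the \emph{full} sequence, using $\PP(E_N)\le N^2\,\PP(|X|>N^{1/2-\eta})\le \E|X|^{4+\delta}\,N^{2-(1/2-\eta)(4+\delta)}$, and you correctly note that summability needs $(1/2-\eta)(4+\delta)>3$. But solving this for $\eta$ gives $\eta<(\delta-2)/\bigl(2(4+\delta)\bigr)$, not $\eta<\delta/\bigl(2(4+\delta)\bigr)$ as you claim: the two differ by the replacement $\delta\mapsto\delta-2$, and the correct bound is negative whenever $\delta\le 2$. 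Since Assumption \ref{assumfora.s.} only provides \emph{some} $\delta>0$ (and the lemma must certainly cover, say, $\delta=1/2$), no admissible $\eta>0$ exists in that regime, and the union bound $\sum_N N^2\,\PP(|X|>N^{1/2-\eta})$ diverges. Your arithmetic slip has hidden a real obstruction: a direct, $N$-by-$N$ Borel--Cantelli over the $\sim N^2$ entries costs three powers of $N$ and is simply too expensive for a $4+\delta$-th moment when $\delta$ is small.

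What the paper does instead (citing pages 94--95 of \cite{bai2010spectral}) is exactly the dyadic-blocking / Bai--Yin argument you anticipated in your opening sentence and then dismissed. Grouping $N\in[2^{m-1},2^m)$, the event that some entry of $A_N$ exceeds $N^{1/2-\eta}$ forces some entry indexed by $i,j\le 2^m$ (dominated via the coupling \eqref{anisotita stoch dom} by a copy of $X$) to exceed $2^{(m-1)(1/2-\eta)}$; the resulting series $\sum_m 2^{2m}\,\PP(|X|>2^{(m-1)(1/2-\eta)})$ only needs $(1/2-\eta)(4+\delta)>2$, i.e.\ $\eta<\delta/\bigl(2(4+\delta)\bigr)$, which is nonvacuous for every $\delta>0$ — indeed this is the bound you wrote down, but it belongs to the blocked argument, not the unblocked one. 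Your remark ``the trace bound is available for every $N$, so summability covers the full sequence'' is correct for the $\tilde A_N$ term (that half of your proof matches the paper, via the summable bound \eqref{boundonthetruncSR}), but it is irrelevant to the $A_N^>$ term, which is controlled by tail probabilities of the entries and not by traces. Restore the blocking step for $A_N^>$ — relying, as the paper does, on the stochastic domination by a single $X$ so that the union over the $N$'s inside a dyadic block does not multiply the probability by the block length — and the proof closes; as written, it only proves the lemma under the unstated extra hypothesis $\delta>2$.
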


\begin{proof}
Pick $\eta\in(0, 1/8)$, its exact value will be determined below, and define the matrices $A_N^{\le}, \E A_N^{\le}$ as in the proof of Proposition \ref{PropSRCondition}. The proof will be accomplished once we show that
\begin{align}
    &\limsup_N\frac{|A_N^{\le}|_{\op}}{\sqrt{N}}\leq R, \ \  \text{ a.s., and} \label{AsTruncated} \\
    &\PP \left(A_{N}\neq A^{\leq}_{N} \text{ for infinitely many } N  \right)=0. \label{FinallyEqual}
\end{align}

\textsc{Proof of \eqref{AsTruncated}.} Since $\Sigma(R)$ holds for the sequence $(A_N)_{N\ge1}$, the proof of Proposition \ref{PropSRCondition} (the part with heading 3) shows that 
\begin{equation}
 \limsup_{N}\frac{|A^{\leq }_{N}-\E A_{N}^{\leq }|_{\op}}{\sqrt{N}}\leq  \mu_{\infty} \ \text{  a.s. }
\end{equation}
because the upper bound in \eqref{boundonthetruncSR} is summable with respect to $N$.
In the same proof it is shown that 
$$\limsup_{N}\frac{|\E A_{N}^{\leq }|_{\op}}{\sqrt{N}}=0   \text{  a.s. }$$
Using these two facts and the triangle inequality we get \eqref{AsTruncated}.

\textsc{Proof of \eqref{FinallyEqual}.} Let $X$ be the random variable that stochastically dominates the entries of $A_{N}$ in the sense of \eqref{stochdom}. Let $X_{N}$ be a sequence of symmetric random matrices after an appropriate coupling such that for all $N\in \N$ and $i,j\in [N]$ it is true that
     \begin{align}\label{anisotita stoch dom}
         |a^{(N)}_{i,j}|\leq |(X_{N})_{i,j}|
         \end{align}
     and the entries of $X_{N}$ are independent up to symmetry and all following the same law as $X.$ It is an easy exercise to show that for any $a, c>1$ and $Y$ real valued random variable we have 
     \begin{equation}
    \sum_{k=1}^\infty a^k \PP(|Y|\ge c^k)\le \frac{1}{a-1}\E\big\{|Y|^{\frac{\log a}{log c}}\big\}.
     \end{equation}
Using this inequality and the fact that the random variable $X$ has finite $4+\delta$ moment, we get that all $\eta\le\frac{\delta}{\delta+4}$ satisfy  
     \begin{align}\label{sumfinitea.s.}
         \sum_{m=1}^{\infty} 2^{2m}\PP(|X|\geq 2^{\frac{m}{2}(1-\eta)})<\infty.
     \end{align}    
Thus, picking in the beginning of the proof an arbitrary $\eta$ with $0<\eta<(1/8)\wedge(\delta/(4+\delta))$, we have
     \begin{align} & \label{i.o for A}
        \PP \left(A_{N}\neq A^{\leq}_{N} \text{ for infinitely many } N  \right) 
         = \PP \left(\text{ for infinitely many $N$ there are }  i, j\in [N]: |a_{i, j}^{(N)}|> C N^{\frac{1}{2}-\eta}  \right)
         \\ \label{i.o for X} &\le 
           \PP \left( \text{ for infinitely many $N$ there are }  i, j\in [N]: |X_{i, j}^{(N)}|> C N^{\frac{1}{2}-\eta}  \right)=\PP \left(X_{N}\neq X^{\leq}_{N} \text{ for infinitely many } N   \right).
         \end{align}
 In the second line, the inequality is a consequence of \eqref{anisotita stoch dom}, and the matrix $X^{\leq }_{N}$ is the matrix whose $(i, j)$ element is $(X_{N})_{i, j} \mathbf{1}_{|(X_N)_{i, j}|\le CN^{\frac{1}{2}-\eta}}$. The convergence of the series in \eqref{sumfinitea.s.}. implies that the probability in the right hand side of \eqref{i.o for X} is $0$ (see \cite{bai2010spectral}, pages 94 and 95) and finishes the proof of \eqref{FinallyEqual}.    
\end{proof}

\section{Proof of Theorem \ref{to theorima}}
The convergence $\mu_{A_N/\sqrt{N}}\Rightarrow \mu$ in probability 
implies that
\begin{equation} \label{LowerBound}
    \liminf_{N}\frac{|A_{N}|_{\op}}{\sqrt{N}}\geq \mu_{\infty} \ \ \ \text{in probability},
\end{equation}
that is, for all $\epsilon>0, \lim_{N\to\infty}\PP(|A_{N}|_{\op}/\sqrt{N}<\mu_{\infty}-\epsilon)=0$.
So in order to prove Theorem \ref{to theorima} one needs to prove that
\begin{equation}\label{H anisotita gia to theorima limsup}
\limsup_{N} \frac{|A_{N}|_{\op}}{\sqrt{N}} \leq \mu_{\infty}
\end{equation}
in probability. By Proposition \ref{PropSRCondition}, it is enough to prove that condition $\Sigma(\mu_\infty)$ is satisfied.

We will prove condition $\Sigma(\mu_\infty)$ separately for each of the Assumptions \ref{Assumptions 1} and \ref{Assumption 2} in the next two lemmas.

\begin{lem}\label{protasi gia anisotita kalon orwn me apeiro norma}
    Let $(A_N)_{N\in\N^+}$ be a sequence of matrices that satisfies Assumptions  \ref{ConvergenceOnTrees}, \ref{Genikes ypotheseis}, and \ref{Assumptions 1}. Then for every $k,N \in \N^+$ such that $k<N$ it is true that
    \[M_{N}(k)\leq N^{k+1}\mu_{\infty}^{2k}.\]
In case $\mu_\infty>0$, the inequality is true (as equality) for $k=0$ also.

\end{lem}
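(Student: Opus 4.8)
The plan is to prove the inequality $M_N(k)\le N^{k+1}\mu_\infty^{2k}$ by exploiting Assumption \ref{Assumptions 1} to compare the quantity $M_N(k)$ with $M_{2N}(k)$, thereby setting up a subadditivity-type argument, and then passing to the limit where Assumption \ref{ConvergenceOnTrees} identifies the limit of $M_N(k)/N^{k+1}$ with $\int x^{2k}\,d\mu=\mu^{2k}$-th moment, which is bounded by $\mu_\infty^{2k}$.

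First I would fix a tree $T\in\mathbf{C}_k$ and a labeling $\mathbf{i}=(i_0,\dots,i_k)\in[N]^{k+1}$ of $T$, and observe that the inequality \eqref{VarInequalities} lets me "lift" this configuration to $[2N]$: replacing each label $i$ appearing at a vertex by $2i$ or $2i-1$ according to a rule depending on the edge structure, I want to produce many labelings of the same tree $T$ inside $[2N]$ whose product $\prod_{\{a,b\}\in E(T)} s^{(2N)}_{a,b}$ dominates $\prod_{\{a,b\}\in E(T)} s^{(N)}_{i_a,i_b}$. The three inequalities $s^{(N)}_{i,j}\le\min\{s^{(2N)}_{2i,2j},s^{(2N)}_{2i-1,2j},s^{(2N)}_{2i-1,2j-1}\}$ are designed precisely so that, orienting each edge of $T$ away from the root, one can consistently choose the parity of the $[2N]$-label at each vertex (root gets an odd label, say, and each child's parity is determined so that the parent-child edge matches one of the three allowed patterns); this gives at least $2^k$ (one binary choice per edge, roughly) distinct lifted labelings, all distinct tuples in $[2N]^{k+1}$, each contributing a summand to $M_{2N}(k)$ at least as large as the original summand of $M_N(k)$. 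Summing over all $T$ and all labelings $\mathbf{i}$ yields $2^k M_N(k)\le M_{2N}(k)$, or more conveniently a bound of the form $M_N(k)/N^{k+1}\le M_{2N}(k)/(2N)^{k+1}$ after accounting for the normalization $2^k$ versus $2^{k+1}$ — I would need to check the bookkeeping carefully here, since $2^k\cdot N^{k+1}$ versus $(2N)^{k+1}=2^{k+1}N^{k+1}$ leaves a factor of $2$ that must be absorbed by the counting of lifted labelings (I expect exactly $2^{k+1}$ distinct lifts, not $2^k$, once the root parity is also free or once one counts carefully).

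Given the monotonicity $M_N(k)/N^{k+1}\le M_{2N}(k)/(2N)^{k+1}$, the sequence $\bigl(M_{2^r N_0}(k)/(2^r N_0)^{k+1}\bigr)_r$ is nondecreasing in $r$, so $M_N(k)/N^{k+1}\le \lim_{r\to\infty} M_{2^r N}(k)/(2^r N)^{k+1} = \int x^{2k}\,d\mu(x)$ by Assumption \ref{ConvergenceOnTrees}. Finally, since $\mu$ is supported in $[-\mu_\infty,\mu_\infty]$ (it is symmetric with $\mu_\infty=\sup\operatorname{suppt}\mu$), one has $\int x^{2k}\,d\mu(x)\le \mu_\infty^{2k}\int d\mu = \mu_\infty^{2k}$, which gives $M_N(k)\le N^{k+1}\mu_\infty^{2k}$ as claimed. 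The case $k=0$ is immediate since $M_N(0)=N=N^1\mu_\infty^0$ when $\mu_\infty>0$.

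The main obstacle I anticipate is the combinatorial lifting step: verifying that the parity assignment to the vertices of $T$ in $[2N]$ can always be made consistently edge-by-edge (the three allowed patterns $(2i,2j),(2i-1,2j),(2i-1,2j-1)$ cover "even-even, odd-even, odd-odd" but not "even-odd", so one must orient carefully and check that traversing the tree never forces the forbidden pattern), that the resulting tuples are genuinely distinct labelings and not accidentally collapsing, and that the count of valid lifts is at least the factor needed to beat the $(2N)^{k+1}/N^{k+1}=2^{k+1}$ normalization gap. I would handle this by rooting $T$, orienting all edges away from the root, and defining the lift of vertex $v$ to have odd label iff $v$ has odd depth (or some similar depth-parity rule) — then every edge goes from one parity class to another in a way matching one of the three allowed patterns — and then noting there is additional freedom (the "$2i$ vs $2i-1$" choice is forced by the rule but one still has the global freedom, or one re-examines to extract the needed multiplicative factor). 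Everything after this step is routine limiting and support considerations.
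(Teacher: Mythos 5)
Your proposal follows essentially the same strategy as the paper's proof: lift labelings from $[N]$ to $[2N]$ using Assumption~\ref{Assumptions 1}, deduce the monotonicity $M_N(k)/N^{k+1}\le M_{2N}(k)/(2N)^{k+1}$, iterate to get monotonicity along $N, 2N, 4N, \dots$, and pass to the limit via Assumption~\ref{ConvergenceOnTrees} to bound the $k=0$ term by $\mu_\infty^{2k}$. The limiting argument and the identification $\int x^{2k}\,d\mu\le\mu_\infty^{2k}$ are carried out correctly.

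However, there is a genuine gap in the combinatorial lifting step, and you have essentially flagged it yourself. You worry that the three inequalities in \eqref{VarInequalities} cover the parity patterns (even,\,even), (odd,\,even), (odd,\,odd) but not (even,\,odd), and you propose to route around this by a depth-parity rule (root even, alternate down the tree). Two problems. First, the worry is unnecessary: since $s^{(N)}_{i,j}=s^{(N)}_{j,i}$ and $s^{(2N)}_{2i,2j-1}=s^{(2N)}_{2j-1,2i}$, the ``odd-even'' inequality applied to the pair $(j,i)$ gives $s^{(N)}_{i,j}=s^{(N)}_{j,i}\le s^{(2N)}_{2j-1,2i}=s^{(2N)}_{2i,2j-1}$, so the (even,\,odd) pattern is in fact allowed, and \emph{every} parity assignment to the $k+1$ vertices works. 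Second, and more seriously, your depth-parity rule, taken at face value, produces only \emph{one} lift per original labeling, which is far short of the $2^{k+1}$ you need to beat the $(2N)^{k+1}/N^{k+1}=2^{k+1}$ normalization; your remark about ``global freedom'' giving the missing factor is not substantiated. The clean fix is exactly the observation above: take $\phi_{\mathbf{d}}(\mathbf{i})=2\mathbf{i}+\mathbf{d}$ for every $\mathbf{d}\in\{-1,0\}^{k+1}$; these $2^{k+1}$ maps are injective with pairwise disjoint images, each preserves the ``distinct coordinates'' property of a labeling, and each edge of the lifted tree satisfies \eqref{VarInequalities} regardless of which parity pattern it produces. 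Summing over $\mathbf{d}$ then yields $2^{k+1}M_N(k)\le M_{2N}(k)$, exactly the monotonicity you want, with no case analysis and no shortfall in the count. Once this is in place, the rest of your argument is sound.
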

\begin{proof}    
Fix $N,k \in \N^+$ with $k<N$ and a tree $T \in \mathbf{C}_{k}$. Then, for each $\mathbf{d}:=(d_1, d_2, \ldots, d_{k+1})\in \{-1, 0\}^{k+1}$ consider the function
    \[\phi_{\mathbf{d}}:[N]^{k+1}\rightarrow [2N]^{k+1}\]
    with
    \[\phi_{\mathbf{d}}\left(i_{1},i_{2},\cdots,i_{k+1}\right)=2\left(i_{1},i_{2},\cdots.,i_{k+1}\right)+(d_{1},d_{2},\cdots,d_{k+1})\]
    for all $i_1, i_2, \ldots, i_{k+1}\in [N]$. 
    Each $\phi_{\mathbf{d}}$ is one to one and, for different vectors $\mathbf{d},\mathbf{d'}\in \{-1,0\}^{k+1}$, the image of $\phi_{\textbf{d}}$ is disjoint from that of $\phi_{\textbf{d'}}$. If $G$ is a plane rooted tree whose vertices in order of appearance in a depth first search are $(i_1, i_2, \ldots, i_{k+1})\in [N]^{k+1}$, and $\phi_{\mathbf{d}}(i_1, i_2, \ldots, i_{k+1})=(j_1, j_2, \ldots, j_{k+1})$, we denote by $\phi_{\mathbf{d}}(G)$ the plane rooted tree with vertex set $\{j_1, j_2, \ldots, j_{k+1}\}$, root $j_1$, and edges $\{\{j_a, j_b\}:\{a, b\} \text{ is an edge of } G\}$. Note that if all coordinates of $\mathbf{i}\in[N]^{k+1}$ are different, the same is true for the coordinates of $\phi_{\mathbf{d}}(\textbf{i})$.

    Lastly, by assumption \ref{Assumptions 1}, for any $T \in \mathbf{C}_{k}, \mathbf{i}\in[N]^{2k}$ such that $G(\mathbf{i})\sim T$ and $\mathbf{d}\in\{-1, 0\}^{k+1}$, it is true that 
    \begin{equation}\label{anisotita gia diplasiasmo kai gia ena dentro}
    \prod_{\{i,j\} \in E(G(\mathbf{i}))} s^{(N)}_{i,j}\leq \prod_{\{i,j\} \in E(\phi_{\mathbf{d}}(G(\mathbf{i})))}s^{(2N)}_{i,j}.
    \end{equation}
    So if one sums over all possible trees in $\mathbf{C}_{k}$ and $d\in\{-1, 1\}^{k+1}$, \eqref{anisotita gia diplasiasmo kai gia ena dentro} implies that
    \begin{equation}\label{anisotita gia diplasiasmo ropwn }
    2^{k+1}M_{N}(k)= \sum_{\mathbf{d}\in\{-1, 0\}^{k+1}}\sum_{T \in \mathbf{C}_{k}}\sum_{\mathbf{i}\in [N]^{2k}: G(\mathbf{i})\sim T} \prod_{\{i,j\} \in E(G(\mathbf{i}))} s^{(N)}_{i,j}\leq \sum_{\mathbf{d}\in\{-1, 0\}^{k+1}} \sum_{T \in \mathbf{C}_{k}}\sum_{\mathbf{i}\in [N]^{2k}: G(\mathbf{i})\sim T} \prod_{\{i,j\} \in E(\phi_{\mathbf{d}}(G(\mathbf{i})))}s^{(2N)}_{i,j}\leq M_{2N}(k).
    \end{equation}
    By applying \eqref{anisotita gia diplasiasmo ropwn } inductively, one can prove that for fixed $N,k \in \N$ the sequence
    \[q_{m}:=M_{2^{m}N}(k)/(2^m N)^{k+1}, m\in\N\]
    is increasing in the variable $m$. So by  \eqref{assumptionchangedgraphon} it is true that
    \[\sup_{m}q_{m}=\lim_{m \rightarrow \infty}q_{m}=\int x^{2k}\, d\mu(x)\le \mu_\infty^{2k}.\]
In particular, $q_{0}\leq \mu_{\infty}^{2k}$, completing the proof.
\end{proof}

\begin{lem}\label{Protasi gia anisotita graphon}
    Suppose $(A_{N})_{N\in\N^+}$ is a sequence of matrices such that Assumptions \ref{Genikes ypotheseis},\ref{Assumption 2} hold. Then for each $C_1>0$ there is $C_2>0$ such that
\begin{equation} \label{EtrBound}
 M_{N}(k)\leq C_2 N^{k+1} \mu_{\infty}^{2k}
\end{equation}
for all $N\in\N^+$ and $1\le k \le C_1\log N$.
\end{lem}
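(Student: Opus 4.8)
The goal is to bound $M_N(k)$ in terms of $\mu_\infty^{2k}$ using the graphon convergence in Assumption \ref{Assumption 2} together with the quantitative $L^1$-rate \eqref{fast convergence assumption}. The starting observation is the one made in Remark \ref{RemConvOnTrees}: for a fixed tree $T\in\mathbf{C}_k$, the quantity $N^{-(k+1)}\sum_{\mathbf{i}\ \text{labeling of }T}\prod_{\{i,j\}\in E(T)}s_{i,j}^{(N)}$ differs from the isomorphism density $t(T,W_N)$ only in that the latter also allows repeated indices; the difference is nonnegative and is itself a sum of $O(1)$ many isomorphism densities $t(T',W_N)$ over proper quotient multigraphs $T'$ of $T$ (obtained by identifying vertices of $T$). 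Summing over $T\in\mathbf{C}_k$ and using $|\mathbf{C}_k|\le 2^{2k}$, I would first reduce the problem to controlling $\sum_{T\in\mathbf{C}_k}t(T,W_N)$ and the ``collapsed'' densities, uniformly for $k$ up to $C_1\log N$.

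\textbf{From $W_N$ to $W$.} The second step is to replace $W_N$ by the limiting graphon $W$, at the cost of an error governed by \eqref{fast convergence assumption}. Since $W_N$ and $W$ take values in $[0,V_0]$ (one may truncate $W$ to this range without changing the tree densities, as $W\ge 0$ a.e.\ and $W\le V_0$ a.e.\ follows from the variance bounds in Assumption \ref{Genikes ypotheseis}), a standard telescoping/``swap one edge at a time'' argument gives
\begin{equation}
|t(T,W_N)-t(T,W)|\le |E(T)|\, V_0^{|E(T)|-1}\,\|W_N-W\|_{L^1}\le k\,V_0^{k-1}\,C\,N^{-D},
\end{equation}
for $N\ge N_0(D)$, and similarly for each collapsed multigraph $T'$ (which has at most $k$ edges as well). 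Choosing $D$ large enough — say $D>2C_1\log 2+1$ — makes the accumulated error $\sum_{T\in\mathbf{C}_k}(\text{error for }T)\le 2^{2k}k V_0^{k-1}CN^{-D}$ negligible compared with $N^{k}\mu_\infty^{2k}$ for all $k\le C_1\log N$, provided $\mu_\infty>0$; the case $\mu_\infty=0$ forces $W=0$ a.e.\ and is handled directly.

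\textbf{Identifying the limit.} The third step is to recognize $\sum_{T\in\mathbf{C}_k}t(T,W)$ as the $2k$-th moment of $\mu$. By Remark \ref{RemConvOnTrees} (or directly by Theorem 3.2(a) of \cite{zhu2020graphon}, which Assumption \ref{Assumption 2} feeds into via the convergence \eqref{GraphCountConv}), $\sum_{T\in\mathbf{C}_k}t(T,W)=\int x^{2k}\,d\mu(x)\le \mu_\infty^{2k}$, since $\mu$ is supported in $[-\mu_\infty,\mu_\infty]$. The collapsed-graph densities are handled by the crude bound $t(T',W)\le V_0^{|E(T')|}\le \max(1,V_0)^{k}$ together with the combinatorial fact that the number of (multigraph, quotient) pairs arising from trees with $k$ edges is at most $C^k$ for an absolute constant $C$; hence their total contribution, after dividing by $N^{k+1}$, is $O(C^k/N)$, which is $o(N^{k}\mu_\infty^{2k})$ uniformly for $k\le C_1\log N$ as long as we absorb the constants into $C_2$. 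Assembling the three steps yields $M_N(k)\le N^{k+1}\mu_\infty^{2k}(1+o(1))\le C_2 N^{k+1}\mu_\infty^{2k}$ for a suitable $C_2=C_2(C_1)$ and all $N\ge N_0$, $1\le k\le C_1\log N$.

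\textbf{Main obstacle.} The delicate point is uniformity in $k$: both the number of trees ($2^{2k}$) and the per-tree bounds carry exponential-in-$k$ constants, so the $N^{-D}$ rate in \eqref{fast convergence assumption} must be chosen with $D$ depending on $C_1$ to beat $2^{2k}\le N^{2C_1\log 2}$, and one must be careful that the ``$C_2$'' absorbing the collapsed terms does not secretly depend on $k$. I expect the bookkeeping of these exponential factors — making sure every error term is genuinely $o(N^k\mu_\infty^{2k})$ and not merely $o$ for fixed $k$ — to be the crux; the rest is the routine ``swap-an-edge'' Lipschitz estimate for tree densities in $W$ and the identification of the limit via \cite{zhu2020graphon}.
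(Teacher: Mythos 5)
Your approach is essentially the paper's: pass from $M_N(k)/N^{k+1}$ to $\Xi_N(k):=\sum_{T\in\mathbf{C}_k}t(T,W_N)$ via the one-sided bound coming from restricting to distinct indices, telescope edge-by-edge (DFS order) using the $L^1$ rate \eqref{fast convergence assumption} to replace $W_N$ by $W$, identify $\sum_T t(T,W)=\mu_{2k}\le\mu_\infty^{2k}$, and pick $D$ large enough to beat the $2^{2k}$ tree count for $k\le C_1\log N$, finally absorbing small $N$ into $C_2$. Two issues:

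First, your choice $D>2C_1\log 2+1$ is not sufficient in general. The accumulated telescoping error $\le C k 2^{2k}N^{-D}$ must be dominated by $\mu_\infty^{2k}$ itself (not by $N^k\mu_\infty^{2k}$ as you write — after dividing through by $N^{k+1}$ the comparison is at the level of $\Xi_N(k)\le(\text{const})\,\mu_\infty^{2k}$). This requires $N^D\gtrsim k(2/\mu_\infty)^{2k}$, hence $D>2C_1\log(2/\mu_\infty)=-2C_1\log(\mu_\infty/2)$, which is what the paper uses. When $\mu_\infty<1$ your $D$ does not beat the factor $(2/\mu_\infty)^{2k}$, so $D$ must depend on $\mu_\infty$, not just on $C_1$.

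Second, the whole discussion of ``collapsed'' quotient multigraphs is both unnecessary and, as stated, unsound. Since the lemma only asks for an upper bound on $M_N(k)$, the inequality $M_N(k)/N^{k+1}\le\Xi_N(k)$ (the correction terms are nonnegative, as you note) is all you need; there is no reason to control $t(T',W_N)$ for quotients $T'$. And the combinatorial claims you invoke there — that the number of quotients per tree is ``$O(1)$'', or that the total over $T\in\mathbf{C}_k$ is $\le C^k$ — do not hold with $k$-independent constants (set-partition counts of $k{+}1$ vertices are super-exponential). Had this estimate actually been needed, your argument would break for $k\sim\log N$. As it stands, deleting that detour and correcting the choice of $D$ recovers the paper's proof.
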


\begin{proof}
Note that for $1\le k<N$,
\begin{equation} \label{GraphonsTrees}
    \frac{1}{N^{k+1}}M_N(k)\le\sum_{T\in \mathbf{C}_k} \int_{[0, 1]^{k+1}} \bigg(\prod_{\{i, j\}\in E(T)} W_{N}(x_i, x_j)\bigg) \, dx_1 dx_2\cdots dx_{k+1}=:\Xi_N(k).
\end{equation}
The inequality holds because the left hand side results if on the right hand side we restrict the domain of integration to the union of the sets $\prod_{r=1}^{k+1}((i_r-1)/N, i_r/N]$ where all $i_1, i_2, \ldots, i_{k+1}\in[N]$ are different. Thus, it is enough to show \eqref{EtrBound} with the left hand side replaced by $N^{k+1}\Xi_N(k)$.

Fix $T \in \mathbf{C}_{k}$ and enumerate the edges of $T$ in the order of first appearance during a depth first search algorithm. For $\{i,j\} \in E(T)$, let $\{i,j\}_{\text{ord}}$ be its enumeration. Then for any integer $l\in [0, k]$ define the following quantities.
\begin{equation}\mu^{(l)}_{N}(k,T)=\int_{[0.1]^{k+1}}\prod_{\{i, j\}\in E(T):\{ i,j\}_{\text{ord}}\leq l } W_{N}(x_i, x_j) \prod_{\{i, j\}\in E(T): \{i,j\}_{\text{ord}}\geq l+1} W(x_{i},x_{j}) dx_1 dx_2\cdots dx_{k+1}.
\end{equation}
Note that 
\begin{equation}
\sum_{T\in \B{C}_k}\mu_{N}^{(0)}(k,T)=\mu_{2k}, \quad \sum_{T\in \B{C}_k} \mu_{N}^{(k)}(k,T)=\Xi_N(k)
\end{equation}
Fix $D>0$. Since all the variances are uniformly bounded by $1$, Assumption \eqref{fast convergence assumption} implies that there exists some $N_{0}(D)$ and $C>0$ such that for $N\geq N_{0}(D)$ and any $1\le l \le k<N$ it is true that
\begin{equation} \label{IncrBound} |\mu_{N}^{(l)}(k,T)-\mu_{N}^{(l-1)}(k,T)| \leq \int_{[0,1]^{2}}|W_{N}(x,y)-W(x,y)|\, dx \, dy\leq C \frac{1}{N^{D}}.
\end{equation}
Consequently, since $|\B{C}_k|\le 2^{2k}$,  for $k<N$ we have  
\begin{align} \label{telescopicBound1} \left|\Xi_{N}(k)-\mu_{2k}\right|&\leq \sum_{T \in \mathbf{C}}\Big| \sum_{l=1}^k\{\mu_{N}^{(l)}(k,T)-\mu_{N}^{(l-1)}(k,T)\}\Big| \leq \sum_{T \in \mathbf{C}_{k}}\sum_{l=1}^k|\mu^{(l)}_{N}(k,T)-\mu^{(l-1)}_{N}(k,T)|\leq \frac{C k 2^{2k}}{N^{D}}. 
\end{align}
Pick any $D>-2C_1\log(\mu_\infty/2)$. Then there is $N_0'\in\N^+, N_0'>N_0(D)$ such that $C k 2^{2k}/N^{D}\le \mu_\infty^{2k}$ for all $N>N_0'$ and $1\le k \le C_1 \log N$. And since $\mu_{2k}\leq \mu_{\infty}^{2k}$, we will have $\Xi_N(k)\leq 2 \mu_{\infty}^{2k}$ for the same $N$ and $k$. If we choose a constant  $C_2\ge 2$ so that \eqref{EtrBound} is satisfied for $N\in [N_0']$ and $1\le k\le  C_1\log N$, then we will have \eqref{EtrBound} for all $N, k$ claimed. 
\end{proof}

 \subsection{Proof of almost sure convergence under the additional Assumption \ref{assumfora.s.} }
 The convergence in probability that we have proven so far gives 
 \begin{align}
       \liminf_{N \to \infty} \frac{|A_N|_{\op}}{\sqrt{N}}\ge\mu_{\infty} \text{ a.s. }
    \end{align}
The opposite inequality follows from Lemma \ref{AsConvLemma} whose assumptions are satisfied, with $R=\mu_\infty$, because, under both scenarios of the Theorem, assumption $\Sigma(\mu_\infty)$ holds.

\section{Proof of Theorem \ref{Theorima gia sxedon kalous pinakes}}
The plan is to write the matrix $A_N$ as $A_N^{(1)}+A_N^{(2)}$ so that 
for the sequence $\{A_N^{(1)}\}_{N\ge1}$ we can apply Theorem \ref{to theorima}  while for $\{A_N^{(2)}/\sqrt{N}\}_{N\ge1}$ the operator norm will tend to zero.

Let
\begin{align}     
\mathcal{D}_N&:=\{(i, j)\in [N]^2: \text{ there exists } m \in [d_N]: (i, j)\in (\mathcal{B}^{(N)}_{m})^{\circ} \}.
\end{align} 
Then define the matrices
\begin{align}
\{A_{N}^{(1)}\}_{i,j}&:=\mathbf{1}_{(i,j) \in \mathcal{D}_{N}} \{A_{N}\}_{i,j}, \\ \{A_{N}^{(2)}\}_{i,j}&=\mathbf{1}_{(i, j)\notin \mathcal{D}_N}\{A_{N}\}_{i,j}.
\end{align}
\textbf{}
The proof follows from the triangle inequality for the operator norm and the following two statements, which we are going to prove next.
\begin{align} 
\label{oi liges kakes grammes}
\lim_{N \to \infty}\frac{|A^{(2)}_{N}|_{\op}}{\sqrt{N}}&=0\quad \text{in probability}. \\
\lim_{N\to\infty} \frac{|A_N^{(1)}|_{\op}}{\sqrt{N}}&=\mu_\infty.  \label{mainMatrix}
\end{align}

        \noindent \textsc{Proof of \eqref{oi liges kakes grammes}}. 
        For any $k \in \N $ denote by $M^{(2)}_N(k)$ the quantity \eqref{orismos kalon orwn} but with the role of $A_N$ played by $A^{(2)}_N$, i.e., $s_{i, j}^{(N)}$ is replaced by  $s_{i, j}^{(N)}\textbf{1}_{(i,j) \notin \mathcal{D}_N}$
     By Proposition \ref{PropSRCondition} it is sufficient to prove that for any constant $C_1>0$ it is true that for any $k\leq C_1 \log N$,
        \begin{align}\label{M^(2) bound}
            M^{(2)}_N(k) \leq N (8 d_N)^k.
        \end{align}
This is true because each product in \eqref{orismos kalon orwn} is at most 1, then the inner sum has at most $N(2d_N)^k$ non zero terms [there are  $N$ choices for $i_1$, and then, for each choice of $i_1$ there are at most $2d_N$ choices for $i_2$ that have $s_{i_1, i_2}^{(N)}\ne0$ due to condition (c) of Definition \ref{defngenvar}, and the same restriction holds for $i_3, \ldots, i_{k+1}$] and the outer sum has $|\mathbf{C}_k|\le 4^k$ terms. 

\smallskip

        \noindent \textsc{Proof of \eqref{mainMatrix}}. We will show that Theorem \ref{to theorima} can be applied to the sequence $\{A_N^{(1)}\}_{N\ge1}$. First we  prove that
        \begin{equation} \label{GoodPieceConv}
            \mu_{A_N^{(1)}/\sqrt{N}} \Rightarrow \mu \text{ in probability as } N\to\infty.
        \end{equation}
        As remarked after relation \eqref{assumptionchangedgraphon}, 
        $\mu_{A_N/\sqrt{N}} \Rightarrow \mu \text{ in probability as } N\to\infty.$ Then, from a well known inequality (Corollary A.41 in \cite{bai2010spectral}), the Levy distance between $\mu_{A_N/\sqrt{N}}$ and $\mu_{A_N^{(1)}/\sqrt{N}}$ is bounded as follows.
\begin{align}
L^3(\mu_{A_N/\sqrt{N}}, \mu_{A_N^{(1)}/\sqrt{N}}) \le \frac{1}{N}\text{tr}\bigg\{\bigg(\frac{1}{\sqrt{N}} A_N-\frac{1}{\sqrt{N}} A_N^{(1)}\bigg)^2\bigg\}=\frac{1}{N^2}\sum_{i, j\in [N]} \{(A_N^{(2)})_{i, j}\}^2.    
\end{align}
The expectation of the rightmost quantity is at most $N^{-2} N 2 d_N$ (since each row of $A_N^{(2)}$ has at most $2d_N$ elements that are not identically zero random variables and these random variables have second moment at most 1), which tends to 0 as $N\to0$ because of the assumption on $d_N$.

        Then the sequence $\{A_N^{(1)}\}_{N\ge1}$ satisfies: \\
        $\bullet$ Assumption \ref{ConvergenceOnTrees} with the same measure as $\{A_N\}_{N\ge1}$. This follows from Lemma \ref{BreakLemma}. Assumption (c) of that lemma is satisfied because of \eqref{GoodPieceConv}. \\
        $\bullet$ Assumption \ref{Genikes ypotheseis}, this is clear, \\   
        $\bullet$ Assumption \ref{Assumptions 1}. Indeed, fix $(i,j)\in [N]^2$. If $(i,j)\in \mathcal{D}_{N}$, there exists some $m\in[d_N]$ such that 
        \[\{(i+d_1,j+d_2): d_1, d_2\in\{-1, 0, 1\}\}\subseteq \mathcal{B}^{(N)}_{m}.\]
        Then  Assumption \eqref{growth of interior of convex sets} implies that there exists some $f\in [d_{2N}]$ such that
\[\{(2i+d_1, 2j+d_2): d_1, d_2\in\{-2, 0, 2\}\}\subseteq\mathcal{B}^{(2N)}_{f}.\]
But since $\mathcal{B}^{(2N)}_{f}$ is axially convex (see before Definition \ref{defngenvar}), one can conclude that
\[\{(2i+d_1, 2j+d_2): d_1, d_2\in\{-2, -1, 0, 1, 2\}\}\subseteq\mathcal{B}^{(2N)}_{f}.\]
Now since $(k, \ell)\mapsto s_{k, \ell}^{(2N)}$ is constant in $\mathcal{B}^{(2N)}_{f}$ [see \eqref{defnofvariancesconvex}] and we assumed \eqref{extrassuconvex}, our claim follows.

Thus, all the Assumptions of Theorem \ref{to theorima} hold for $A_N^{(1)}$, and hence \eqref{mainMatrix} holds. \\
\textit{Almost sure convergence under the additional Assumption \ref{assumfora.s.}}.  Using Lemma \ref{AsConvLemma}, we will prove that 
\begin{align}\label{ineqfortheorA^1a.s.}
                 \limsup_{N\to \infty}\frac{|A_N^{(1)}|_{\op}}{\sqrt{N}} & \leq \mu_{\infty} \ \  \text{ a.s. } \\
             \label{ineqfortheorA^2a.s.}
    \limsup_{N\to \infty}\frac{|A_N^{(2)}|_{\op}}{\sqrt{N}}&\le \epsilon \ \  \text{a.s.  for any }\epsilon>0.
\end{align}
And these are enough to prove our claim.

Notice that the validity of Assumptions \ref{Genikes ypotheseis}(a) and  \ref{assumfora.s.} for the sequence $(A_{N})_{N\in\N^+}$ implies the validity of the same assumptions for the sequences $(A^{(1)}_{N})_{N\in\N^+}$ and $(A^{(2)}_N)_{N\in\N^+}$. 
As was mentioned above, the sequence $\{A_N^{(1)}\}_{N\ge1}$ satisfies Assumption \ref{ConvergenceOnTrees} with the same measure as $\{A_N\}_{N\ge1}$. And then Lemma \ref{protasi gia anisotita kalon orwn me apeiro norma} implies that the sequence $(A^{(1)}_N)_{N\in\N^+}$ satisfies condition $\Sigma(\mu_{\infty})$, while \eqref{M^(2) bound} and $\lim_{N\to\infty} d_N/n=0$ imply that, for any $\epsilon>0$, the sequence $(A^{(2)}_N)_{N\in\N^+}$ satisfies condition $\Sigma(\epsilon)$. Thus, Lemma \ref{AsConvLemma} applies and gives the desired inequalities. 
        
\section{Step function profile. Proof of Theorem \ref{theoremstepfunction}}
\begin{proof}[Proof of Theorem \ref{theoremstepfunction}] The inequality $\liminf_{N\to\infty}|A_N|_{\op}/\sqrt{N}\ge \mu_\infty^{\sigma}$ almost surely is justified with the same argument as \eqref{LowerBound} with the only difference that here we have $\mu_{A_N/\sqrt{N}}\Rightarrow \mu^{\sigma}$ a.s., and so the inequality will be true in the a.s. sense. 

For the reverse inequality, we will apply Lemma \ref{AsConvLemma}. To check Assumptions \ref{Genikes ypotheseis}(a) and \ref{assumfora.s.}, required by that lemma, note that the $(i, j)$ element of $A_N$ is of the form $\sigma_{p, q} X_0'$ for a constant $\sigma_{p. q}\in[0, 1]$ and $X_0'\overset{d}{=}X_0$, and clearly $X_0$ can play the role of $X$ in relation \eqref{stochdom}. We will prove that $(A_N)_{N\in\N^+}$ satisfies condition $\Sigma(c)$ for all $c>\mu_\infty$, and this will finish the proof.

Define the matrix $\hat \Sigma_N\in\R^{N\times N}$ by
\begin{equation} \label{SigmaHatDef}
    (\hat \Sigma_N)_{i,j}=\sigma_{p, q} \text{\qquad if } \begin{array}{l} a_{p-1}+(1/N)\le i/N <a_p \text{ and }\\ a_{q-1}+(1/N)\le j/N < a_q, \end{array}
\end{equation}
and $\hat A_N:=\hat \Sigma_N\odot A'_N$.

Also, let 
\begin{equation}
\varepsilon_N:=\max\bigg\{\Big|\frac{R_p^{(N)}}{N}-\alpha_p\Big|:p\in[m-1]\bigg\}.\end{equation}
By Definition \ref{defn rmt step function}, it holds $\lim_{N\to\infty} \varepsilon_N=0$.

Claim 1: a) With probability one, $\hat A_N/\sqrt{N}$ has the same limiting ESD as  $A_N/\sqrt{N}$.

    b) For $\hat A_N$, Lemma \ref{protasi gia anisotita kalon orwn me apeiro norma} applies. 
    
Consequently, 
\begin{equation} \label{GMBound}
    \hat M_N(k)\le N^{k+1} (\mu_\infty^\sigma)^{2k}
\end{equation}
for all $1\le k<N$.

Proof of Claim 1:

a) This is true because by Theorem A.43 in the Appendix A of \cite{bai2010spectral}, the Kolmogorov distance between $\mu_{A_N/\sqrt{N}}$ and $\mu_{\hat A_N/\sqrt{N}}$ is at most
\begin{align}
  \frac{1}{N}\operatorname{rank} (A_N-\hat A_N)\leq  \frac{m}{N} \max_{p \in [m]}( \max \{ R_p, N a_p\}- \min\{R_p, N a_p\})= m \epsilon_N \overset{N \to \infty}{\to} 0.
\end{align}

b) Assumption \ref{Genikes ypotheseis} is satisfied because $\E(|X_0|^{4+\delta})$ and $\sigma_{p, q}\le 1$ for all $p, q\in[m]$. To show that Assumption \ref{ConvergenceOnTrees} is satisfied, we repeat the argument just before the statement of the Theorem. For the sequence $(\hat A_N)_{N\in \N^+}$, the corresponding $W_N(x, y)$, as $N\to\infty$, converges to $\sigma^2(x, y)$ for almost all $(x, y)\in [0, 1]^2$. Assumption \ref{Assumptions 1} is satisfied because if for some $i, j$ we have $\text{Var}[(\hat A_N)_{i, j}]>0$, then this equals $\sigma^2_{p, q}$ for the unique $p, q$ as in \eqref{SigmaHatDef}. Then 
\begin{align}&\frac{2i-1}{2N}\in[a_{p-1}+\frac{1}{2N}, a_p-\frac{1}{2N}), 
 \frac{2i}{2N}\in[a_{p-1}+\frac{1}{N}, a_p),\\
&\frac{2j-1}{2N}\in[a_{q-1}+\frac{1}{2N}, a_q-\frac{1}{2N}), \frac{2j}{2N}\in[a_{q-1}+\frac{1}{N}, a_q).
\end{align}
Thus, \eqref{VarInequalities} holds as equality.

\medskip

Claim 2: There is $\theta\in(0, \infty)$ so that $M_N(k)\le e^{\theta(k+1)\varepsilon_N} \hat M_N(k)$ for all $k<N$.

Proof of Claim 2: Define the following sets of indices.
\begin{align}
\Delta_p^{(N)}&:=I_p^{(N)}\cap[a_{p-1}N+1, a_p N),\\
\Delta^{(N)}&:=\cup_{p=1}^m   \Delta^{(N)}.
\end{align}
When $p=m$, the interval in the intersection becomes closed on the right also.
Then 
\begin{align}
M_N(k)&\le \hat M_N(k)+\sum_{\emptyset\ne J\subset [k+1]}  \sum_{T \in \mathbf{C}_k}\sum_{i_1\cdots i_{k+1}} \mathbf{1}\left(i_l \notin \Delta^{(N)} \text{ if } l \in J  \text{ and } i_l \in \Delta^{(N)} \text{ if } l \notin J  \right) \prod_{\{i,j\} \in E(T)} s_{i,j}^{(N)}\\
&=\hat M_N(k)+\sum_{\emptyset\ne J\subset [k+1]}  \sum_{T \in \mathbf{C}_k}\sum_{m_1, \ldots, m_{k+1}\in [m]^{k+1}} a(T, J, m_1, m_1, \ldots, m_{k+1}) \label{BadIndSplit}
\end{align}
where
\begin{equation}\label{thirdiineqforstep}
a(T,J,m_1\cdots m_{k+1}):=      \sum_{i_1\in I_{m_1}^{(N)}, \cdots i_{k+1}\in I_{m_{k+1}}^{(N)}} \mathbf{1}(
i_l \notin \Delta^{(N)} \text{ if } l \in J, i_l \in \Delta^{(N)} \text{ if } l \notin J, (i_\ell)_{\ell\in[k+1]} \text{distinct}) \prod_{\{i,j\} \in E(T)} s_{i,j}^{(N)}.
\end{equation}
Note that
\begin{align}\label{M^(1) rewritten}
    \sum_{T\in \mathbf{C}_k}\sum_{m_1,m_2\cdots m_{k+1}\in [m]^{k+1}} a(T,\emptyset,m_1\cdots m_{k+1})\le \hat M_N(k).
\end{align}
We will show that for some constant $\theta=\theta(I_1, I_2, \ldots, I_m)\in (0, \infty)$ we have
\begin{align}\label{fourthineqforstep}
    a(T,J,m_1\cdots m_{k+1})\leq (\theta \varepsilon_N)^{|J|}a(T,\emptyset,m_1,\cdots m_{k+1}).
\end{align}

In the definition of $a(T, J, m_1, m_2, \ldots, m_{k+1})$, the product is common to all summands [recall the rectangles of constancy of the map $(i, j)\mapsto s_{i, j}^{(N)}$]. We write $a(T,J,m_1\cdots m_{k+1})$ and $a(T,\emptyset, m_1, \ldots m_{k+1})$ as

\begin{align} \label{doubleSumJ}
\sum_{i_\ell\in I_{m_\ell}^{(N)} \text{ for } \ell\notin J} \mathbf{1}(i_l \in \Delta^{(N)} \text{ for } l \notin J, (i_\ell)_{\ell\in[k+1]\sm J} \text{ distinct}) \sum_{i_\ell\in I_{m_\ell}^{(N)} \text{ for all } \ell\in J}\mathbf{1}(
i_l \notin \Delta^{(N)} \text{ for all } l \in J, (i_\ell)_{\ell\in[k+1]} \text{distinct}) 
\prod_{\{i,j\} \in E(T)} s_{i,j}^{(N)},\\ \label{doubleSum0}
\sum_{i_\ell\in I_{m_\ell}^{(N)} \text{ for } \ell\notin J} \mathbf{1}(i_l \in \Delta^{(N)} \text{ for } l \notin J, (i_\ell)_{\ell\in[k+1]\sm J} \text{ distinct}) \sum_{i_\ell\in I_{m_\ell}^{(N)} \text{ for all } \ell\in J}\mathbf{1}(
i_l \in \Delta^{(N)} \text{ for all } l \in J, (i_\ell)_{\ell\in[k+1]} \text{distinct}) 
\prod_{\{i,j\} \in E(T)} s_{i,j}^{(N)}
\end{align}
We will compare the inner sums in the two expressions.
 Notice that there are $C_1, C_2>0$ that depend on $a_1, a_2, \ldots, a_m$ only so that 
\begin{align}
|\Delta_p^{(N)}|&\ge C_1 N,\\
|I_p^{(N)}\sm \Delta^{(N)}|&\le C_2 \varepsilon_N N
\end{align}
for all $p\in[m]$. For each fixed collection $(i_\ell)_{\ell\notin J}$, the inner sum in \eqref{doubleSumJ} is at most $(C_2\varepsilon_N N)^{|J|}$ while the inner sum in \eqref{doubleSum0} is at least $(C_1N/2)^{|J|}$. The ratio of the first over the second bound is $(2C_2\varepsilon_N/C_1)^{|J|}$. Thus, we get \eqref{fourthineqforstep} with $\theta:=2C_2/C_1$.

Taking into account \eqref{fourthineqforstep} and \eqref{M^(1) rewritten}, we get that the second summand in \eqref{BadIndSplit} is bounded above by
\begin{align} \sum_{t=1}^{k+1}\sum_{J\subset [k+1]: |J|=t} \hat M_{N}(k) (\theta \varepsilon_N)^t=M^{(1)}_N(k) \sum_{t=1}^{k+1} \binom{k+1}{t} (\theta \varepsilon_N)^t=\hat M_N(k)\left\{\left(1+\theta \varepsilon_N\right)^{k+1}-1\right\}
\end{align}

Consequently, $M_N(k)\le (1+\theta \varepsilon_N)^{k+1} \hat M_N(k)\le e^{\theta (k+1) \varepsilon_N} \hat M_N(k)$, and this proves Claim 2.

Now, combining this with \eqref{GMBound}, we get that condition $\Sigma((1+\varepsilon)\mu_\infty)$ is satisfied for each $\varepsilon>0$. 
\end{proof}

        \section{An approximation result and proof of Corollary \ref{theoremconfunction}}  \label{ContModelSection}
        
        \begin{prop}\label{symperasma gia proseggiseis pinakon}
    Let $(A_N)_{N\in\N^+}$ be a sequence of symmetric random matrices, $A_N$ of dimension $N\times N$, of the form
    \begin{equation}
    A_{N}=\Sigma_{N}\odot A'_{N},
    \end{equation}
     where $\Sigma_{N}\in[0, \infty)^{N\times N}$ and $A'_{N}$ is a random $N\times N$ symmetric matrix with independent entries (up to symmetry) all with zero mean and unit variance. 
     
     For every $n\in \N^+$ consider a sequence $(\Sigma^{(n)}_{N})_{N\in\N^+}$ of matrices, with $\Sigma^{(n)}_{N}\in [0, \infty)^{N\times N}$, and define
     \begin{equation} A^{(n)}_{N}:=\Sigma^{(n)}_{N}\odot A'_{N} \quad \text{ for each } N\in \N^+. \end{equation}
(a) Assume that

\begin{itemize}
    \item[(i)]  the sequence $(A_N')_{N\in\N^+}$ satisfies Assumption \ref{Genikes ypotheseis},

    \item[(ii)] for each $n\in\N^+$ it holds
\begin{equation} \label{anisotita kales prosegiseis}
   \lim_{N \rightarrow \infty} \frac{|A^{(n)}_N|_{\op}}{\sqrt{N}} =\mu_\infty^{(n)} \text{ in probability, } 
\end{equation}
where $\mu_\infty^{(n)}$ is a finite constant,

\item[(iii)] \begin{equation}\lim_{n \to \infty}\mu^{(n)}_{\infty}=:\mu_{\infty}\in \mathbb{R},\label{EdgeConvergence} 
\end{equation}
\item[(iv)]\begin{equation}
\lim_{n\to\infty}\limsup_{N\to\infty}|\Sigma_{N}-\Sigma^{(n)}_N|_{\max}=0.\label{ApproxVariance} \end{equation}
\end{itemize}
Then 
\begin{equation} \label{ConvAfterApprox}
\lim_{N \rightarrow \infty}\frac{|A_{N}|_{\op}}{\sqrt{N}}=\mu_{\infty} \qquad \text{in probability.}
\end{equation}
(b) Assume that, in addition to the assumptions of (a), the convergence in \eqref{anisotita kales prosegiseis} holds in the a.s. sense and Assumption \ref{assumfora.s.} holds for the sequence $(A_N')_{N\in\N^+}$. Then the limit in \eqref{ConvAfterApprox} holds in the a.s. sense.
\end{prop}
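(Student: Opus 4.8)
The plan is to prove part (a) first, establishing the upper and lower bounds for $\limsup$ and $\liminf$ of $|A_N|_{\op}/\sqrt{N}$ separately, and then observe that part (b) follows by running the same argument with convergence in probability replaced by almost sure convergence everywhere. The key tool throughout is the operator-norm triangle inequality applied to the decomposition $A_N = A_N^{(n)} + (A_N - A_N^{(n)})$, together with the elementary bound $|B|_{\op} \le N \|B\|_{\max}$ applied to the Hadamard-type difference matrix $(\Sigma_N - \Sigma_N^{(n)})\odot A_N'$. The point is that for fixed $n$, the ``error'' matrix $R_N^{(n)} := (\Sigma_N - \Sigma_N^{(n)})\odot A_N'$ has entries whose variance is at most $(\|\Sigma_N - \Sigma_N^{(n)}\|_{\max})^2$, so it is itself a variance-profile matrix with uniformly small variance profile, and we can control its operator norm via a moment/trace bound (Proposition \ref{PropSRCondition} type reasoning) or, more cheaply, via the same $\Sigma(R)$-condition machinery.

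Here is the order of steps I would carry out. \textbf{Step 1 (upper bound).} Fix $\epsilon>0$. Choose $n$ large enough that $\limsup_N |\Sigma_N - \Sigma_N^{(n)}|_{\max} < \epsilon$ (possible by (iv)) and $|\mu_\infty^{(n)} - \mu_\infty| < \epsilon$ (possible by (iii)). Write $A_N = A_N^{(n)} + R_N^{(n)}$. By the triangle inequality $|A_N|_{\op}/\sqrt N \le |A_N^{(n)}|_{\op}/\sqrt N + |R_N^{(n)}|_{\op}/\sqrt N$. The first term converges in probability to $\mu_\infty^{(n)} < \mu_\infty + \epsilon$ by (ii). For the second term I would show $|R_N^{(n)}|_{\op}/\sqrt N \to 0$ in probability (for this fixed $n$, along $N\to\infty$, up to an $o(1)$ coming from the $\limsup$): since the entries of $R_N^{(n)}$ have variance eventually at most $(2\epsilon)^2$, say, the matrix $R_N^{(n)}/(2\epsilon)$ satisfies Assumption \ref{Genikes ypotheseis} and its good-term sums $M_N(k)$ are bounded by $|\mathbf{C}_k| N^{k+1}(2\epsilon)^{2k} \le N^{k+1}(8\epsilon)^{2k}$, so condition $\Sigma(4\epsilon)$ holds (using $|\mathbf{C}_k|\le 2^{2k}$); then Proposition \ref{PropSRCondition} gives $|R_N^{(n)}|_{\op}/\sqrt N \le 8\epsilon$ with probability tending to 1. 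Combining, $\PP(|A_N|_{\op}/\sqrt N \ge \mu_\infty + 10\epsilon) \to 0$. Since $\epsilon$ is arbitrary, $\limsup_N |A_N|_{\op}/\sqrt N \le \mu_\infty$ in probability. \textbf{Step 2 (lower bound).} The reverse triangle inequality gives $|A_N|_{\op}/\sqrt N \ge |A_N^{(n)}|_{\op}/\sqrt N - |R_N^{(n)}|_{\op}/\sqrt N$, and the same two estimates yield $\liminf_N |A_N|_{\op}/\sqrt N \ge \mu_\infty^{(n)} - 8\epsilon \ge \mu_\infty - 9\epsilon$ with probability tending to 1, hence $\ge \mu_\infty$ in probability. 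Steps 1 and 2 together give \eqref{ConvAfterApprox}.

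\textbf{Step 3 (part (b)).} Under the extra hypotheses, (ii) holds almost surely and the sequence $(A_N')_{N\in\N^+}$ satisfies Assumption \ref{assumfora.s.}; then so does $(R_N^{(n)})_{N\in\N^+}$ for each fixed $n$ (the deterministic weights $|(\Sigma_N)_{i,j} - (\Sigma_N^{(n)})_{i,j}|$ only shrink the entries if they are $\le 1$, and in general one rescales by the $\limsup$ of $|\Sigma_N - \Sigma_N^{(n)}|_{\max}$, which is finite — so some multiple of $X$ stochastically dominates the entries). Hence Lemma \ref{AsConvLemma} applies to $R_N^{(n)}/(4\epsilon)$ with $R$ arbitrarily small, giving $\limsup_N |R_N^{(n)}|_{\op}/\sqrt N \le 8\epsilon$ a.s. Feeding this into the two triangle inequalities of Steps 1--2, with $n = n(\epsilon)$ chosen as before and $\epsilon$ ranging over a sequence tending to $0$ (intersecting countably many probability-one events), yields $\lim_N |A_N|_{\op}/\sqrt N = \mu_\infty$ almost surely.

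The main obstacle I anticipate is the bookkeeping in Step 1 around interchanging the $\limsup_N$ in hypothesis (iv) with the fixed-$n$ estimates: for each fixed $n$ the bound $|\Sigma_N - \Sigma_N^{(n)}|_{\max} < 2\epsilon$ holds only for $N$ large (depending on $n$), so one must be careful that condition $\Sigma(4\epsilon)$ for $R_N^{(n)}$ is only claimed for $N \ge N_0(n)$, which is exactly the form Proposition \ref{PropSRCondition} and Lemma \ref{AsConvLemma} require. A secondary subtlety is that $R_N^{(n)}$ may fail Assumption \ref{Genikes ypotheseis}(a)'s normalization $\sup \E|\cdot|^2 \le 1$ if $\epsilon$ is not small, so one should phrase everything in terms of the rescaled matrix $R_N^{(n)}/c_n$ with $c_n := \max(1, \limsup_N |\Sigma_N - \Sigma_N^{(n)}|_{\max}+1)$ and note $c_n \to 1$, or simply shrink $\epsilon$ from the outset; either way the argument is routine once set up.
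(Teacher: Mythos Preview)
Your proposal is correct and follows essentially the same approach as the paper's proof: decompose $A_N = A_N^{(n)} + (A_N - A_N^{(n)})$, control the second piece by verifying condition $\Sigma(c\epsilon)$ via the trivial bound $M_N(k)\le |\mathbf{C}_k|\,N^{k+1}\,|\Sigma_N-\Sigma_N^{(n)}|_{\max}^{2k}$ and invoking Proposition~\ref{PropSRCondition} (respectively Lemma~\ref{AsConvLemma} for part (b)), and combine with the assumed convergence of $|A_N^{(n)}|_{\op}/\sqrt{N}$ through the triangle inequality. The only cosmetic differences are that the paper packages the upper and lower bounds into a single probability estimate rather than treating them separately, and that your constants (the $4\epsilon$ vs.\ $8\epsilon$) are a bit loose, which is harmless.
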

\begin{proof} (a) Fix $\epsilon\in(0, 1/2)$ and $n_{0}$ large enough such that for every $n\geq n_{0}$ it is true that 
$$|\mu_{\infty}-\mu_{\infty}^{(n)}|\leq \epsilon \text{ and }\limsup_{N\to\infty}|\Sigma_{N}-\Sigma^{(n)}_N|_{\max}<\epsilon.$$
Fix an $n\ge n_0$. There is an $N_0=N_0(n)\in \N^+$ so that $|\Sigma_{N}-\Sigma^{(n)}_N|_{\max}<\epsilon^2$ for all $N\ge N_0$. 
Then for $N\ge N_0$ we have
\begin{align}
\PP\left(\left|\frac{|A_{N}|_{\op}}{\sqrt{N}}-\mu_{\infty}\right|\ge 5\epsilon\right) \leq \PP\left(\left|\frac{|A_{N}|_{\op}}{\sqrt{N}}-\mu^{(n)}_{\infty}\right|\ge 4\epsilon\right) \leq              \PP\left(\frac{|A_{N}-A^{(n)}_{N}|_{\op}}{\sqrt{N}}\geq 3\epsilon\right)+\PP\left(\left|\frac{|A^{(n)}_{N}|_{\op}}{\sqrt{N}}-\mu^{(n)}_{\infty}\right|\ge\epsilon\right). \label{pano fragma gia proseggisimous pinakes} 
\end{align}
The last term in \eqref{pano fragma gia proseggisimous pinakes} converges to zero as $N\to\infty$ due to \eqref{anisotita kales prosegiseis}. For the previous term we will apply Proposition \ref{PropSRCondition}. 
Notice that the sequence $(A_N-A_N^{(n)})_{N\in\N^+}$ satisfies 
\begin{itemize}
    \item Assumption \ref{Genikes ypotheseis} because $(A_N-A_N^{(n)})_{i, j}=((\Sigma_N)_{i, j}-(\Sigma_N^{(n)})_{i, j})(A'_N)_{i, j}$ and $|(A_N-A_N^{(n)})_{i, j}|\le |(A'_N)_{i, j}|$ (for all $N\in\N^+, i, j\in[N]$) and we assumed that $(A_N')_{N\in\N^+}$ satisfies Assumption \ref{Genikes ypotheseis}
    \item condition $\Sigma(2\varepsilon)$ because if, for $t\in \N^+$ with $ t<N$, we call $M'_N(t)$ the quantity defined in \eqref{orismos kalon orwn} for the matrix $A_{N}-A^{(n)}_{N}$, and note that the $(i, j)$ element of $A_{N}-A^{(n)}_{N}$ has mean zero and variance $\{(\Sigma_N)_{i, j}-(\Sigma_N^{(n)})_{i, j}\}^2$,  we obtain that 
            \begin{equation}\label{ineqA^n-A}M'_{N}(t)\leq N^{t+1}2^{2t}\big(|\Sigma_{N}-\Sigma^{(n)}_{N}|_{\max}\big)^{2t}<N^{t+1}(2 \epsilon)^{2t}.\end{equation}
\end{itemize}

 Since $3\epsilon>2\epsilon(1+\epsilon)$, Proposition \ref{PropSRCondition} implies that the penultimate term in \eqref{pano fragma gia proseggisimous pinakes} goes to zero as $N\to\infty$. 

\medskip 

(b) It is enough to prove that with probability 1 it holds $\varlimsup_{N \rightarrow \infty}\frac{|A_{N}|_{\op}}{\sqrt{N}}\le \mu_{\infty}$. Because of \eqref{anisotita kales prosegiseis} (holding a.s.) and  \eqref{EdgeConvergence}, it is enough to prove that for all $\epsilon>0$ and all $n$ large enough, with probability 1, it holds
\begin{equation}\varlimsup_{N \rightarrow \infty}\frac{|A_{N}-A_N^{(n)}|_{\op}}{\sqrt{N}}\le 2 \epsilon.\end{equation}
To prove this, we will apply Lemma \ref{AsConvLemma}.  Take $n_0$ so that for all $n\ge n_0$ it holds $\limsup_{N\to\infty}|\Sigma_{N}-\Sigma^{(n)}_N|_{\max}<\epsilon^2.$ Now fix $n\ge n_0$. There is $N_0=N_0(n)\in\N^+$ so that $|\Sigma_{N}-\Sigma^{(n)}_N|_{\max}<\epsilon^2$ for all $N\ge N_0$. Then the sequence $(A_N-A_N^{(n)})_{N\ge N_0}$ satisfies Assumption \ref{Genikes ypotheseis}(a) as we saw in part a) of the proposition, Assumption \ref{assumfora.s.} (because $A'_N$ does so and $|\Sigma_{N}-\Sigma^{(n)}_N|_{\max}<1$), and assumption $\Sigma(2\epsilon)$ because of \eqref{ineqA^n-A}. Then Lemma \ref{AsConvLemma} gives the desired inequality.
\end{proof}

        \begin{proof}[Proof of Corollary \ref{theoremconfunction}] We will apply Proposition \ref{symperasma gia proseggiseis pinakon}(b) for the sequence $(A_N)_{N\in\N^+}$. The sequence $(A_N')_{N\in\N^+}$ mentioned in that Proposition is exactly the sequence $(A_N')_{N\in\N^+}$ of relation \eqref{ContinuousModel} and it satisfies Assumption \ref{Genikes ypotheseis} because for it the discussion following Assumption \ref{Genikes ypotheseis} applies ($X_0$ has finite fourth moment). 
        
            For each $n\in\N^+$, we define the following obvious approximation to $\sigma$. 
\begin{equation} \label{Discretization}
\sigma^{(n)}(x,y):= n^2\int_{I_k}\int_{I_{\ell}}\sigma(a,b)\, da\, db  \text{ if } (x, y)\in I_k\times I_\ell \text{ for } k, \ell\in[n], 
\end{equation}
where $I_k:=[\frac{k-1}{n},\frac{k}{n})$ for $k\in[n-1]$ and $I_n:=[(n-1)/n, 1]$. Then, we define the matrices $\Sigma_N^{(n)}$ through the relation $(\Sigma_N^{(n)})_{i, j}:=\sigma^{(n)}(i/N, j/N)$.

For each $n \in \N^+$, the sequence of matrices $(\Sigma_{N}^{(n)}\odot A'_{N})_{N\ge1}$ satisfies the assumptions of Theorem \ref{theoremstepfunction}. Consequently, as $N\to\infty$, the sequence $(\mu_{A^{(n)}_N/\sqrt{N}})_{N\in\N^+}$ converges almost surely weakly to a symmetric measure, say $\mu^{(n)}$, with support contained in $[-\mu_\infty^{(n)}, \mu_\infty^{(n)}]$ and \eqref{anisotita kales prosegiseis} holds in the a.s. sense. In a claim below we prove that condition \eqref{EdgeConvergence} is satisfied. Finally, to check \eqref{ApproxVariance}, note that 
\begin{equation} \label{StepApproxCont} \left|(\Sigma_N)_{i, j}-(\Sigma_N^{(n)})_{i, j}\right|\le 
\left|(\Sigma_N)_{i, j}-\sigma(i/N, j/N)\right|+\left|\sigma(i/N, j/N)- \sigma^{(n)}(i/N, j/N)\right|.
\end{equation}
In the right hand side of the last inequality, the first term converges to zero as $N\to\infty$ due to \eqref{ContinuousVPApprox}, and the second term is at most the supremum norm of $\sigma-\sigma^{(n)}$, which goes to zero as $n\to\infty$ because $\sigma$ is uniformly continuous in $[0, 1]^2$. Thus, Proposition \ref{symperasma gia proseggiseis pinakon}(b) applies and completes the proof.  \\        
\textsc{Claim:} Condition \eqref{EdgeConvergence} is satisfied. \\
We modify the proof of Lemma 6.4 of \cite{husson2022large}.
Call $\mu$ the weak limit as $N\to\infty$ of $\mu_{A_N/\sqrt{N}}$, then $F_N, F_N^{(n)}$ the distribution function of $\mu_{A_N/\sqrt{N}}$ and $\mu_{A_N^{(n)}/\sqrt{N}}$ respectively, and $F, F^{(n)}$ the distribution function of $\mu, \mu^{(n)}$ respectively. Let 
\begin{align*}
\lambda_{N, 1}\le \lambda_{N, 2}\le \cdots \le \lambda_{N, N}, \\
    \lambda_{N, 1}^{(n)}\le \lambda_{N, 2}^{(n)}\le \cdots \le \lambda_{N, N}^{(n)}
\end{align*}
the eigenvalues of $A_N/\sqrt{N}, A_N^{(n)}/\sqrt{N}$ respectively. 

Let $\epsilon\in(0, 1/2)$. There is $n_0=n_0(\epsilon)$ so that for all $n\ge n_0$ it holds $\limsup_{N\to\infty} |\Sigma_N-\Sigma_N^{(n)}|_{\max}<\epsilon^2$. Take now $n\ge n_0$ fixed. There is $N_0=N_0(n)\in\N^+$ so that $|\Sigma_N-\Sigma_N^{(n)}|_{\max}<\epsilon^2$ for all $N\ge N_0$. As explained in the proof of Proposition \ref{symperasma gia proseggiseis pinakon}, $\lim_{N\to\infty}\PP\left(|A_{N}-A^{(n)}_{N}|_{\op}\geq 3\epsilon\sqrt{N}\right)=0$. There is sequence $(N_k)_{k\ge1}$ so that in a set $\Omega_\epsilon$ of probability 1, eventually for all $k$ we have  $|A_{N_k}-A^{(n)}_{N_k}|_{\op}<3\epsilon\sqrt{N_k}$. Since
$$\max_{i\in[N]}|\lambda_{N_k, i}^{(n)}-\lambda_{N_k, i}|\le |A_{N_k}-A^{(n)}_{N_k}|_{\op}/\sqrt{N_k},$$
in $\Omega_\epsilon$ (the inequality is true by Theorem A46 in \cite{bai2010spectral}),  we will have eventually for all $k\in\N^+$ that 
\begin{equation}
F_{N_k}^{(n)}(a-3\epsilon) \le F_{N_k}(a)\le F_{N_k}^{(n)}(a+3\epsilon) 
\end{equation}
for all $a\in\R$. From here, using the convergence as $N\to\infty$ of $F_N$ to $F$ and of $F_N^{(n)}$ to $F^{(n)}$, we have that for all $a\in\R$ it holds
\begin{equation}
F^{(n)}(a-3\epsilon) \le F(a)\le F^{(n)}(a+3\epsilon). 
\end{equation}
[First we get this for all $a$ outside a countable subset of $\R$ and then using the right continuity of $F, F^{(n)}$ we get it for all $a\in\R$.]
This implies that $|\mu_{\infty}^{(n)}-\mu_{\infty}|\le 3\epsilon$ and finishes the proof of the claim.
\end{proof}
\begin{rem} The above proof easily generalizes to the case that the function $\sigma$ is piecewise continuous in the following sense. There are $m\in\N^+$,  $0=a_0<a_1<\cdots<a_{m-1}<a_m=1$ so that letting $I_p:=[a_{p-1}, a_p)$ for $p=1, 2, \ldots, m-1$, and $I_m=[a_{m-1}, 1]$ the function $\sigma|I_p\times I_q$ is uniformly continuous for all $p, q\in[m]$ (i.e., when $\sigma$ extends continuously in the closure of each rectangle $I_p\times I_q$. Recall that to handle the last term in \eqref{StepApproxCont} all we needed was the uniform continuity of $\sigma$. 
\end{rem}

\section{Examples}
\subsection{Random Gram matrices} Let $(X_N)_{N\in\N^+}$ be a sequence of matrices so that $X_N$ is an $M(N)\times N$ matrix with independent, centered
entries with unit variance, and $M:\N^+\to\N^+$ a function with $\lim_{N\to\infty}\frac{M(N)}{N}=c\in(0, \infty)$. It is known
that the empirical spectral distribution of $XX^{T}$, after rescaling, converges to the Marchenko-Pastur law $\mu_{MP}$ \cite{marvcenko1967distribution}. Moreover, the convergence of the rescaled largest eigenvalue to the largest element of the support of $\mu_{MP}$ has  been established in \cite{YBK1988} under the assumption of finite fourth moment for the entries. However, some
applications in wireless communication require understanding the spectrum of 
$XX^{T}$, where X has
    a variance profile, see for example \cite{hachem2008clt} or \cite{couillet2011random}. Such matrices are called random Gram matrices. In this subsection, we establish the convergence of the largest eigenvalue of random Gram matrices to the largest element of the support of its limiting empirical spectral distribution for specific variance profiles. Firstly we give some definitions. 

\begin{defn}[Step function variance profile] \label{NSStepVP}
Consider
\begin{itemize}\item[a)] $m, n\in\N^+$ and numbers $\{\sigma_{p,q}\}_{p\in[m], q\in[n]}\in [0,\infty)^{mn}$. 
\item[b)]  For each $K\in\N^+$, two partitions $\{I_p^{(K)}\}_{p \in [m]}, \{J_p^{(K)}\}_{p \in [n]}$  of $[K]$ in $m$ and $n$ intervals respectively. The numbering of the intervals is such that $x<y$ whenever $x\in I_p^{(K)}, y\in I_q^{(K)}$ or $x\in J_p^{(K)}, y\in J_q^{(K)}$ with $p<q$. Let $LI_p^{(K)}$ and $RI_p^{(K)}$ be the left and right endpoint respectively of $I_p^{(K)}$ and similarly $LJ_p^{(K)}$ and $RJ_p^{(K)}$ for $J_p^{(K)}$.

\item[c)] Numbers $0=\alpha_0<\alpha_1<\cdots<\alpha_{m-1}<\alpha_m:=1$. We assume that $\lim_{M\to\infty}RI_p^{(M)}/M=\alpha_p$ for each $p\in[m]$.

\item[d)] Numbers $0=\beta_0<\beta_1<\cdots<\beta_{n-1}<\beta_n:=1$. We assume that $\lim_{N\to\infty}RJ_q^{(N)}/N=\beta_q$ for each $q\in[n]$.

\item[e)]$M:\N^+\to\N^+$ a function, 

\item[f)] A random variable $X_0$ with $\E(X_0)=0, \E(X_0^2)=1$. 
\end{itemize}
For each $M, N\in\N^+$, define the matrix $\Sigma_{M, N}\in\R^{M\times N}$ by $(\Sigma_{M, N})_{i, j}=\sigma_{p, q}$ if $i\in I_p^{(M)}, j\in I_q^{(N)}$, and let $\{A_{N}\}_{N \in \N^+}$ be the sequence of random matrices defined by
\begin{equation} A_N=\Sigma_{M(N), N}\odot A'_{M(N), N} \label{NSStepModel}
\end{equation}
where $A'_{M(N), N}$ is an $M(N)\times N$ matrix whose elements are independent random variables all with distribution the same as $X_0$. 
We say that $A_N$ in \eqref{NSStepModel} is a \emph{random matrix model whose variance profile is given by a step function.} 
\end{defn}

\begin{defn}[Continuous function variance profile] \label{NSContVP}
           For
           \begin{itemize}
           \item[a)] a continuous function $\sigma:[0, 1]^2\to[0, 1]$,
           \item[b)]$M:\N^+\to\N^+$ a function 
           \item[c)] a sequence $(\Sigma_{M(N), N})_{N\in\N^+}$ of matrices, $\Sigma_{M(N), N}\in[0, 1]^{M(N)\times N}$, with the property
           \begin{equation} \label{ContinuousVPApprox}
               \lim_{N\to\infty}\sup_{i\in[M(N)], j\in[N]}\left|(\Sigma_{M(N), N})_{i, j}-\sigma(i/M(N), j/N)\right|=0,
           \end{equation}
           \item[d)] a random variable $X_0$ with $\E(X_0)=0, \E(X_0^2)=1$,
           \end{itemize}
 consider the sequence $\{A_{N}\}_{N \in \N^+}$ of random matrices, $A_N\in\R^{M(N)\times N}$, defined by
\begin{equation} A_{N}=\Sigma_{M(N), N}\odot A'_{N} \label{NSContinuousModel}
\end{equation}
where the entries of $A'_N$ are independent random variables all with distribution the same as $X_0$. Then we say that $(A_N)_{N \in \N^+}$ is a \emph{random matrix model whose variance profile is given by a continuous function.} Again, we call $\sigma$ the variance profile. \end{defn}

\textsc{Symmetrization}
To study the eigenvalues of $A_N A_N^T$, where $A_N$ falls in one of the cases of the two last definitions, we use the trick of symmetrization. If $A$ is an $M\times N$ matrix, where $M, N\in \N^+$, we call symmetrization of $A$ the $(M+N)\times(M+N)$ symmetric matrix $\tilde A$ defined by
    \begin{align}\label{AN_symmetrization}
        \tilde A:=\begin{bmatrix}\mathbf{O}_{M,M} &  A \\ 
           A^T & \mathbf{O}_{N,N}\end{bmatrix}
           \end{align}
     where, for any $k, l\in\N^+$,  $\mathbf{O}_{k,l}$ denotes the $k\times l$ matrix with all of its  entries equal to $0$. The characteristic polynomials of $AA^T, \tilde A$ are connected through the relation
     \begin{equation}
         \lambda^M\det(\lambda \mathbb{I}_{M+N}-\tilde A)=\lambda^N\det(\lambda^2\mathbb{I}_M-A A^T)
     \end{equation}
     for all $\lambda\in\C$. Thus, in the case $M\le N$, if we call $(t_1, t_2, \ldots, t_{M+N})$ the eigenvalues of the symmetric matrix $\tilde A$, then the vector $(t_1^2, t_2^2, \ldots, t_{M+N}^2)$ contains twice each eigenvalue of $AA^T$ and $N-M$ times the eigenvalue 0 (multiple eigenvalues appear in the previous vectors according to their multiplicities). Thus, the empirical spectral distributions of $AA^T, \tilde A$ are related through
     \begin{equation} \label{ESDRelation}
        \mu_{\tilde A}\circ T^{-1}=\frac{2M}{M+N} \mu_{AA^T}+\frac{N-M}{M+N} \delta_0 
     \end{equation}
     with $T:\R\to[0, \infty)$ being the map $x\mapsto x^2$.

\bigskip

\textsc{Step function profile}: If $(A_N)_{N\in\N^+}$ is as in Definition \ref{NSStepVP}  with $M(N):=\lceil c N\rceil$ for some $c\in(0, 1]$, then the sequence $(\tilde A_N)_{N\in \N^+}$ is of the form given in Definition \ref{defn rmt step function} with the following modification. We require that there is some $\gamma:\N^+\to\N^+$ with $\lim_{N\to\infty}\gamma(N)=\infty$ so that the $N$-th matrix is of dimension $\gamma(N)\times\gamma(N)$ and, for each $N\in\N^+$, the family $(I_p^{(N)})_{p\in[m]}$ is a partition of $[\gamma(N)]$. The numbers $a_p$ satisfy $\lim_{N\to\infty} R^{(N)}_p/\gamma(N)=a_p.$ With this modification, Theorem \ref{theoremstepfunction} holds if the denominator in \eqref{StepProfLimit} is replaced by $\sqrt{\gamma(N)}$.

The sequence  $(\tilde A_N)_{N\in \N^+}$ fits into this framework. We have $\gamma(N)=\lceil c N\rceil+N$, the role of $m$ (of Definition \ref{defn rmt step function}) is played by $m+n$ ($m, n$ from Definition \ref{NSStepVP}), the $(m+n)^2$ constants are 
\begin{equation}
\tilde \sigma_{p, q}:=\begin{cases}0 & \text{ if } p\in[m], q\in[m],\\
\sigma_{p, q-m} & \text{ if } p\in[m], q\in[m+n]\sm[m],\\
\sigma_{q, p-m} & \text{ if } p\in[m+n]\sm[m], q\in[m],\\
0 & \text{ if } p\in[m+n]\sm[m], q\in[m+n]\sm[m]
\end{cases}
\end{equation}
for each $N$, and the partition of $[\gamma(N)]$ into $m+n$ intervals   consists of the intervals (we write $M$ instead of $\lceil c N\rceil$)
\begin{align}
&\Big\{[Ma_{p-1}, Ma_p)\cap \N^+: p\in[m]\Big\}, \\
&\Big\{[M+N\beta_{q-1}, M+N\beta_q)\cap \N^+: q\in[n]\Big\}. 
\end{align}
Dividing the right endpoints of the intervals by $\gamma(N)$ and taking $N\to\infty$, we get the $m+n$ numbers
\begin{equation}\frac{c}{1+c}a_1<\frac{c}{1+c}a_2<\cdots<\frac{c}{1+c}a_m<\frac{c}{1+c}+\frac{1}{1+c}\beta_1<\cdots<\frac{c}{1+c}+\frac{1}{1+c}\beta_n.
\end{equation}
If we feed these data to the recipe of Definition \ref{defn rmt step function}, relation \eqref{StepModel} will give as $A_N$ the matrix $\tilde A_N$ where $A_N$ is given by \eqref{NSStepModel}. 
The discussion preceding Theorem \ref{theoremstepfunction} applied to the sequence $(\tilde A_N)_{N\ge1}$ gives that the ESD of $\tilde A_N/\sqrt{\gamma(N)}$ converges almost surely weakly to a symmetric probability measure $\tilde \mu^{\sigma}$ with compact support. Call $\tilde \mu^{\sigma}_\infty$ the largest element of the support. Relation \eqref{ESDRelation} implies that the ESD of $A_NA_N^T/N$ converges to a measure with compact support contained in $[0, \infty)$ and the largest element of this support is $\mu_{\infty}=(1+c)(\tilde \mu^{\sigma}_\infty)^2$. Then Theorem \ref{theoremstepfunction} has the following corollary.

\begin{cor}\label{thmgrammatrix}
    Assume that $(A_N)_{N\ge1}$ is as in Definition \ref{NSStepVP}  with $M:=\lceil c N\rceil$ for some $c\in(0, 1]$ and $\E(|X_0|^{4+\delta})<\infty$ for some $\delta>0$. Then it is true that
        \begin{equation}\label{convofgram matr}
           \lim_{N\to\infty} \frac{|A_N A^T_N|_{\op}}{N}=\mu_{\infty} \ \ \ \text{ 
   a.s.}
   \end{equation}
    \end{cor}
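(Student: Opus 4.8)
The plan is to reduce everything to the symmetrized matrices $\tilde A_N$ defined in \eqref{AN_symmetrization} and then to invoke Theorem \ref{theoremstepfunction}. Indeed, all the structural work has already been carried out in the paragraph headed \textsc{Step function profile} above: there it is checked that $(\tilde A_N)_{N\ge1}$ is a symmetric random matrix model whose variance profile is given by a step function, in the modified ($\gamma(N)\times\gamma(N)$) sense of Definition \ref{defn rmt step function}, with $\gamma(N)=\lceil cN\rceil+N$, with $m+n$ blocks, with the explicit constants $\tilde\sigma_{p,q}$, and with the explicit partition of $[\gamma(N)]$ into intervals listed there. Since $\E(|X_0|^{4+\delta})<\infty$, the (modified) Theorem \ref{theoremstepfunction} applies to this sequence, so that
\[
\lim_{N\to\infty}\frac{|\tilde A_N|_{\op}}{\sqrt{\gamma(N)}}=\tilde\mu^\sigma_\infty\qquad\text{a.s.},
\]
where $\tilde\mu^\sigma_\infty$ is the largest element of the support of the almost sure weak limit $\tilde\mu^\sigma$ of $\mu_{\tilde A_N/\sqrt{\gamma(N)}}$.

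Next I would use the exact spectral relation between $\tilde A_N$ and $A_N A_N^T$. Because $\tilde A_N$ is symmetric, $|\tilde A_N|_{\op}=\max_i|\lambda_i(\tilde A_N)|$, and, as recorded after \eqref{AN_symmetrization}, the nonzero eigenvalues of $\tilde A_N$ are exactly $\pm$ the singular values of $A_N$ (with multiplicities); hence $|\tilde A_N|_{\op}^2=\lambda_{\max}(A_N A_N^T)=|A_N A_N^T|_{\op}$. Combining this with the display above and with $\gamma(N)/N=(\lceil cN\rceil+N)/N\to 1+c$ gives
\[
\frac{|A_N A_N^T|_{\op}}{N}=\frac{|\tilde A_N|_{\op}^2}{\gamma(N)}\cdot\frac{\gamma(N)}{N}\xrightarrow[N\to\infty]{}(\tilde\mu^\sigma_\infty)^2(1+c)\qquad\text{a.s.}
\]
Finally, relation \eqref{ESDRelation} with $M=\lceil cN\rceil$ and $T(x)=x^2$, passed to the limit, identifies $\mu_\infty$, the largest element of the support of the limiting ESD of $A_N A_N^T/N$, as $\mu_\infty=(1+c)(\tilde\mu^\sigma_\infty)^2$, which is precisely the constant on the right-hand side above. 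This yields \eqref{convofgram matr}.

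The genuinely delicate points are exactly the ones already dispatched before the statement: verifying that $(\tilde A_N)_{N\ge1}$ literally satisfies the hypotheses of the modified Theorem \ref{theoremstepfunction} — that the listed partitions of $[\gamma(N)]$ into intervals, together with the constants $\tilde\sigma_{p,q}$, fit Definition \ref{defn rmt step function} (axial convexity, the doubling/interior conditions, the required symmetry of the constants under $R(\cdot)$, and the limits $R_p^{(N)}/\gamma(N)\to a_p$), and that the off-diagonal blocks $A'_{M(N),N}$ and $(A'_{M(N),N})^T$ assemble into a symmetric matrix with independent-up-to-symmetry entries distributed as $X_0$. Beyond that, the only thing to be careful about is the normalization bookkeeping in the passage from $\tilde A_N$ to $A_N A_N^T$: the replacement of $N$ by $\gamma(N)$ in the denominator, the extra factor $1+c$, and the squaring map $T$ in \eqref{ESDRelation}. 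I expect this normalization tracking to be the main thing to get exactly right; once it is, the conclusion is an immediate consequence of Theorem \ref{theoremstepfunction}.
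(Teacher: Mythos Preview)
Your proposal is correct and follows exactly the paper's approach: the corollary is stated immediately after the \textsc{Step function profile} discussion as a direct consequence of (the modified) Theorem~\ref{theoremstepfunction} applied to the symmetrization $\tilde A_N$, combined with the spectral identity $|A_NA_N^T|_{\op}=|\tilde A_N|_{\op}^2$ and the normalization $\gamma(N)/N\to 1+c$. One small slip: the conditions you list parenthetically (axial convexity, doubling/interior, $R(\cdot)$-symmetry) belong to Definition~\ref{defngenvar}, not Definition~\ref{defn rmt step function}; the latter only requires the interval partition, the symmetry $\sigma_{p,q}=\sigma_{q,p}$, and the endpoint limits, all of which are verified in the paragraph you cite.
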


\textsc{Continuous function profile}: If $(A_N)_{N\in\N^+}$ is as in Definition \ref{NSContVP}  with $M(N):=\lceil c N\rceil$ for some $c\in(0, 1]$, then we apply the discussion preceding Theorem \ref{theoremstepfunction} to the sequence $(\tilde A_N)_{N\in \N^+}$. The graphon, $W_N$, corresponding to $\tilde A_N$ converges pointwise in $[0, 1]^2$ to the graphon $\tilde \sigma$ with
\begin{equation}
\tilde \sigma(x, y):=\begin{cases} 0 & \text{ if } (x, y)\in[0, c/(1+c)]^2 \cup (c/(1+c), 1]^2,\\
\sigma(x(1+c)/c, (1+c)y-c) & \text{ if } (x, y)\in[0, c/(1+c)]\times (c/(1+c), 1],\\
\sigma(y(1+c)/c, (1+c)x-c) & \text{ if } (x, y)\in(c/(1+c), 1]\times [0, x/(1+c)].
\end{cases} 
\end{equation}
We used \eqref{ContinuousVPApprox} and the continuity of $\sigma$. Since $(\tilde A_N)_{N\in\N^+}$ also satisfies Assumption \ref{BasicAssum}, we get that the ESD of $\tilde A_N/\sqrt{\gamma(N)}$ converges almost surely weakly to a symmetric probability measure $\tilde \mu^{\sigma}$ with compact support. Call $\tilde \mu_{\infty}^\sigma$ the largest element of the support. As above, the ESD of $A_NA_N^T/N$ converges to a measure with compact support contained in $[0, \infty)$, and the largest element of this support is $\mu_{\infty}=(1+c)(\tilde \mu^{\sigma}_\infty)^2$.

\begin{cor}\label{thmgrammatrixcon}
    Assume that $(A_N)_{N\ge1}$ is as in Definition \ref{NSContVP}  with $M:=\lceil c N\rceil$ for some $c\in(0, 1]$ and $\E(|X_0|^{4+\delta})<\infty$ for some $\delta>0$. Then it is true that
        \begin{equation}\label{convofgram matrcont}
           \lim_{N\to\infty} \frac{|A_N A^T_N|_{\op}}{N}=\mu_{\infty} \ \ \ \text{ 
   a.s.}
   \end{equation}
    \end{cor}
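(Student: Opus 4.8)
The plan is to reduce the statement to the operator norm of the symmetrized matrices $\tilde A_N$ of \eqref{AN_symmetrization} and then invoke Proposition \ref{symperasma gia proseggiseis pinakon}(b), using step-function Gram models as approximants. Write $\gamma(N):=\lceil cN\rceil+N$; by the discussion preceding the corollary, the ESD of $\tilde A_N/\sqrt{\gamma(N)}$ converges almost surely weakly to the symmetric, compactly supported measure $\tilde\mu^\sigma$, with largest support point $\tilde\mu_\infty^\sigma$ and $\mu_\infty=(1+c)(\tilde\mu_\infty^\sigma)^2$. Once I show $|\tilde A_N|_{\op}/\sqrt{\gamma(N)}\to\tilde\mu_\infty^\sigma$ almost surely, the corollary follows: since $A_NA_N^T$ is positive semidefinite and, by the characteristic-polynomial identity around \eqref{ESDRelation}, its largest eigenvalue equals $\max_i t_i^2$ where $t_1,\dots,t_{\gamma(N)}$ are the eigenvalues of $\tilde A_N$, we get $|A_NA_N^T|_{\op}=|\tilde A_N|_{\op}^2$, whence $|A_NA_N^T|_{\op}/N=(\gamma(N)/N)\,(|\tilde A_N|_{\op}/\sqrt{\gamma(N)})^2\to(1+c)(\tilde\mu_\infty^\sigma)^2=\mu_\infty$ a.s.

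To obtain the operator norm convergence for $\tilde A_N$, I would first record that $\tilde A_N=\tilde\Sigma_N\odot\tilde A'_N$, where $\tilde\Sigma_N$ is the symmetrization of $\Sigma_{M(N),N}$ (zero on the two diagonal blocks) and $\tilde A'_N$ is a $\gamma(N)\times\gamma(N)$ symmetric matrix with independent (up to symmetry) entries all distributed as $X_0$; the diagonal blocks of $\tilde A'_N$ may be filled with independent copies of $X_0$ since they are killed by $\tilde\Sigma_N$. Because $\E(X_0)=0$, $\E(X_0^2)=1$ and $\E(|X_0|^{4+\delta})<\infty$, the sequence $(\tilde A'_N)_N$ satisfies Assumption \ref{Genikes ypotheseis} and Assumption \ref{assumfora.s.} (with $X=X_0$), which gives hypothesis (i) of Proposition \ref{symperasma gia proseggiseis pinakon}. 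For $n\in\N^+$ let $\sigma^{(n)}$ be the discretization \eqref{Discretization} of $\sigma$, set $(\Sigma^{(n)}_{M(N),N})_{i,j}:=\sigma^{(n)}(i/M(N),j/N)$, and let $\tilde A^{(n)}_N:=\tilde\Sigma^{(n)}_N\odot\tilde A'_N$ be the symmetrization of $\Sigma^{(n)}_{M(N),N}\odot A'_{M(N),N}$, where $A'_{M(N),N}$ is the off-diagonal block of $\tilde A'_N$. The matrix $\Sigma^{(n)}_{M(N),N}\odot A'_{M(N),N}$ is a random Gram model with step-function variance profile as in Definition \ref{NSStepVP} (with $m=n$), so Corollary \ref{thmgrammatrix} applies and yields, after passing to $\tilde A^{(n)}_N$ via the eigenvalue identity used above, that $|\tilde A^{(n)}_N|_{\op}/\sqrt{\gamma(N)}\to\tilde\mu_\infty^{(n)}:=(\mu_\infty^{(n)}/(1+c))^{1/2}$ a.s.; this is hypothesis (ii). Hypothesis (iv) is checked exactly as in \eqref{StepApproxCont}: on the diagonal blocks both $\tilde\Sigma_N$ and $\tilde\Sigma^{(n)}_N$ vanish, while on the off-diagonal block their entrywise difference is $|(\Sigma_{M(N),N})_{i,j}-(\Sigma^{(n)}_{M(N),N})_{i,j}|\le |(\Sigma_{M(N),N})_{i,j}-\sigma(i/M(N),j/N)|+\|\sigma-\sigma^{(n)}\|_\infty$, the first term going to $0$ as $N\to\infty$ by the uniform approximation hypothesis of Definition \ref{NSContVP} and the second going to $0$ as $n\to\infty$ by uniform continuity of $\sigma$.

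The remaining and most delicate point is hypothesis (iii), namely $\lim_n\tilde\mu_\infty^{(n)}=\tilde\mu_\infty^\sigma$; weak convergence of the limiting measures by itself does not control the top of the support, so a genuine comparison of spectral distributions is needed. Here I would repeat the argument of the Claim inside the proof of Corollary \ref{theoremconfunction}, applied to the symmetric sequences $(\tilde A_N)_N$ and $(\tilde A^{(n)}_N)_N$: once $\limsup_N|\tilde\Sigma_N-\tilde\Sigma^{(n)}_N|_{\max}<\epsilon^2$, the difference $\tilde A_N-\tilde A^{(n)}_N=(\tilde\Sigma_N-\tilde\Sigma^{(n)}_N)\odot\tilde A'_N$ satisfies condition $\Sigma(2\epsilon)$ by the bound \eqref{ineqA^n-A}, so by Proposition \ref{PropSRCondition} one has $|\tilde A_N-\tilde A^{(n)}_N|_{\op}<3\epsilon\sqrt{\gamma(N)}$ eventually along a subsequence on a probability-one event; combining the eigenvalue interlacing inequality (Theorem A46 of \cite{bai2010spectral}) with the a.s. weak convergence of the ESDs of $\tilde A_N/\sqrt{\gamma(N)}$ and $\tilde A^{(n)}_N/\sqrt{\gamma(N)}$ then forces the distribution functions of the limits to satisfy $F^{(n)}(a-3\epsilon)\le F(a)\le F^{(n)}(a+3\epsilon)$, hence $|\tilde\mu_\infty^{(n)}-\tilde\mu_\infty^\sigma|\le 3\epsilon$ for all large $n$. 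With (i)--(iv) in hand, Proposition \ref{symperasma gia proseggiseis pinakon}(b) — read with $\sqrt{N}$ replaced throughout by $\sqrt{\gamma(N)}$, exactly the reindexing convention already used for Theorem \ref{theoremstepfunction} in the Gram setting — gives $|\tilde A_N|_{\op}/\sqrt{\gamma(N)}\to\tilde\mu_\infty^\sigma$ a.s., and the transfer described in the first paragraph completes the proof. Apart from hypothesis (iii), everything is routine bookkeeping; the one subtlety worth flagging is that hypothesis (iv) must be verified through the continuous profile $\sigma$ and the ratios $i/M(N),\,j/N$, not through the symmetrized profile $\tilde\sigma$ and $i/\gamma(N),\,j/\gamma(N)$, so as to avoid the $O(1)$-row mismatch near the block boundary caused by $\lceil cN\rceil\ne cN$.
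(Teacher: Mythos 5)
Your argument is correct and takes essentially the same approach as the paper: symmetrize, then apply the Gram-reindexed version of Proposition~\ref{symperasma gia proseggiseis pinakon}(b) with the discretized approximants $\sigma^{(n)}$ and the symmetrized profiles $\tilde\Sigma_{M,N}$, $\tilde\Sigma_N^{(n)}$, while adapting the Claim inside the proof of Corollary~\ref{theoremconfunction} to verify the edge convergence \eqref{EdgeConvergence}. Your proof is simply a more detailed writing-out of what the paper compresses into ``adopting the proof of Corollary~\ref{theoremconfunction} to this setting,'' including the correct observation that (iv) should be checked at the level of $\Sigma_{M,N}$ rather than the symmetrized profile.
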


\begin{proof} The proof does not follow directly from Corollary \ref{theoremconfunction} because the sequence $(\tilde A_N)_{N\in\N^+}$ does not necessarily have a continuous variance profile in the sense of Definition \ref{definition of rm cont}. Instead, we mimic the proof of that corollary. We define $\sigma^{(n)}$ as in \eqref{Discretization}, and the $M\times N$ matrix $\Sigma^{(n)}_N$ as $ (\Sigma_N^{(n)})_{i, j}:=\sigma^{(n)}(i/M, j/N)$ for all $i\in[M], j\in[N]$. Then we apply an obvious modification of Proposition \ref{symperasma gia proseggiseis pinakon}  (the $N$-th matrix is of dimension $\gamma(N)\times\gamma(N)$, with $\gamma(N)=\lceil cN\rceil+N$) with the role of $\Sigma_N$ and $\Sigma^{(n)}_N$ played by $\tilde \Sigma_{M, N}, \tilde \Sigma_N^{(n)}$ (the symmetrizations of $\Sigma_{M, N}$ and $\Sigma^{(n)}_N$, defined in \eqref{AN_symmetrization}. The proof continues by adopting the proof of Corollary \ref{theoremconfunction} to this setting. Note that $|\tilde \Sigma_{M, N}-\tilde \Sigma_N^{(n)}|_{\max}=|\Sigma_{M, N}-\Sigma_N^{(n)}|_{\max}$, which has $\lim_{n\to\infty}\limsup_{N\to\infty}|\Sigma_{M, N}-\Sigma_{M, N}^{(n)}|_{\max}=0$.
\end{proof}

\begin{rem}
       In \cite{hachem2008clt} the authors showed that if the variances of the entries of $A_{N,M}$ are given by the values of a continuous function (and some extra assumptions such as bounded $4+\epsilon$ moments of the entries) the limiting distribution of the E.S.D. of $A_{N}A^{T}_{N}$ does exist. So in Theorem \ref{thmgrammatrix} we prove the convergence of the largest eigenvalue of these models as well. The authors in \cite{hachem2008clt} also studied the non-centered version of these models, i.e. when the entries of the matrix do not have $0$ mean, but we do not cover this case with our result. 
       \end{rem}

               \subsection{Further applications of Theorem \ref{Theorima gia sxedon kalous pinakes}}\label{applications}
               In the Random Matrix Theory literature what are commonly described as  Random matrices with variance-profile  given by a step function are more or less what we describe in Theorem \ref{theoremstepfunction}. In this subsection we give some examples  which are covered by the generalized version of this variance-profile matrices (Definition \ref{defngenvar}) but not from the "standard" step functions.

               Let $\{a_{i, j}^{(N)}: N\in \N^+, i, j\in [N]\}$ identically distributed random variables, $a_{i, j}^{(N)}=a_{j, i}^{(N)}$ for all $N\in \N^+, i, j\in [N]$, $\{a_{i, j}^{(N)}: 1\le j\le i\le N\}$ independent for each $N$, and $a_{1, 1}^{(1)}$ has mean 0 and variance 1. Fix $p\in(0, 1]$ and let $A_N$ be the matrix with entries
                   \begin{equation} \label{BandDefinition}
                       \{A_{N}\}_{i,j}=a^{(N)}_{i,j} 1_{|i-j|\leq pN}, \ \ \ i,j\in [N] ,
                \end{equation}
             The sequence $(A_N)_{N\in\N^+}$ satisfies Assumption \ref{BasicAssum} (easy to check) and also Assumption \ref{ConvergenceOnTrees}. To see the last point, we follow Remark \ref{RemConvOnTrees}. The graphon corresponding to $A_N$ is $W_N(x, y)=\textbf{1}_{|\lceil Nx\rceil-\lceil Ny\rceil|\le pN}$ which converges to the graphon $W(x, y)=\textbf{1}_{|x-y|\le p}$ at least on the set $\{(x, y)\in[0, 1]^2:|x-y|\ne pn\}$, which has measure 1. Thus, with probability one, the ESD of $A_N/\sqrt{N}$ converges weakly to a symmetric measure $\mu$ with compact support. Call $\mu_\infty$ the supremum of the support of $\mu$.

               \begin{cor}[Non-Periodic Band Matrices with Bandwidth proportional to the dimension] Assume that for the matrix defined in \eqref{BandDefinition} we have that $a_{1, 1}^{(N)}$ has $0$ mean, unit variance and finite $4+\delta$ moment for some $\delta>0$. Then
                   \[\lim_{N \to 
 \infty}\frac{|A_{N}|_{\op}}{\sqrt{N}}=\mu_{\infty} \ \ \ \text{a.s.}\]
               \end{cor}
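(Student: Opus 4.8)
The plan is to recognize the band matrix $(A_N)_{N\in\N^+}$ as a random matrix model whose variance profile is given by a generalized step function, and then to apply Theorem \ref{Theorima gia sxedon kalous pinakes}. Since the entries have mean $0$ and variance $1$, the variance profile is $s_{i,j}^{(N)}=\mathbf{1}_{|i-j|\le pN}$. Several hypotheses of that theorem are immediate or already established. Assumption \ref{Genikes ypotheseis} holds because the entries are centered, have variance at most $1$, and have a finite $4+\delta$ (hence finite fourth) moment, while \eqref{MaxToZero} follows from the entries being identically distributed with finite fourth moment, by the argument noted after Assumption \ref{Genikes ypotheseis}. Assumption \ref{ConvergenceOnTrees} has already been checked just before the statement of the corollary, using Remark \ref{RemConvOnTrees} and the a.e.\ convergence $W_N\to W$ with $W(x,y)=\mathbf{1}_{|x-y|\le p}$. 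Finally, Assumption \ref{assumfora.s.} holds with $X:=a_{1,1}^{(1)}$, because each $\{A_N\}_{i,j}$ is either distributed as $a_{1,1}^{(1)}$ or is identically $0$, so $\PP(|\{A_N\}_{i,j}|\ge t)\le \PP(|X|\ge t)$ for all $t\ge0$, and $X$ has mean $0$, variance $1$, and finite $4+\delta$ moment.

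The substantive step is to exhibit the partition required by Definition \ref{defngenvar}. I would take $d_N=3$ for every $N$ (taking $d_N=1$ in the degenerate case $p=1$) and set
\[
\mathcal{B}_1^{(N)}:=\{(i,j)\in[N]^2:|i-j|\le pN\},\quad \mathcal{B}_2^{(N)}:=\{(i,j)\in[N]^2:j-i>pN\},\quad \mathcal{B}_3^{(N)}:=\{(i,j)\in[N]^2:i-j>pN\}.
\]
These three sets partition $[N]^2$, each is axially convex since every vertical and every horizontal slice of each $\mathcal{B}_m^{(N)}$ is an integer interval, as one reads off the defining inequalities, and $d_N/N\to0$. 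Property (a) of the definition holds with $R(\mathcal{B}_1^{(N)})=\mathcal{B}_1^{(N)}$ and $R(\mathcal{B}_2^{(N)})=\mathcal{B}_3^{(N)}$, and the constants $s_1^{(N)}=1$, $s_2^{(N)}=s_3^{(N)}=0$ are compatible with this symmetry and reproduce $s_{i,j}^{(N)}$ via \eqref{defnofvariancesconvex}. Property (b) follows at once from $2|i-j|\le 2pN=p(2N)$ (and the analogous strict inequalities), which give $2\mathcal{B}_m^{(N)}\subset\mathcal{B}_m^{(2N)}$ for each $m$. For property (c), I would observe that for each $i$ the set $\mathcal{B}_m^{(N)}\setminus(\mathcal{B}_m^{(N)})^{\circ}$ meets the line $x=i$ in only $O(1)$ points: those within distance $\le 2$ of the band edges $|i-j|=pN$, together with at most two points on the grid boundary; the only truly exceptional columns are $i\in\{1,N\}$, of which there are $O(1)$. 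Consequently the sole place where (c) is used in the proof of Theorem \ref{Theorima gia sxedon kalous pinakes}, namely the bound $M_N^{(2)}(k)\le N(8d_N)^k$, still holds with $8d_N$ replaced by a fixed constant, which suffices because $d_N/N\to0$. Lastly, \eqref{extrassuconvex} holds with equality, since $s_{2i,2j}^{(2N)}=\mathbf{1}_{|2i-2j|\le p(2N)}=\mathbf{1}_{|i-j|\le pN}=s_{i,j}^{(N)}$.

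With all the hypotheses in place, Theorem \ref{Theorima gia sxedon kalous pinakes} yields $|A_N|_{\op}/\sqrt N\to\mu_\infty$ in probability, and the almost sure version as well because Assumption \ref{assumfora.s.} holds. I expect the main obstacle to be purely the geometric bookkeeping of the partition, and specifically condition (c): the point to appreciate is that one does not need the literal constant $2$ appearing there but only a uniform $O(1)$ bound on the number of non-interior lattice points of each block along each vertical line, and a diagonal strip of width $2pN$ has exactly this property. Everything else is a mechanical check against the hypotheses of Theorem \ref{Theorima gia sxedon kalous pinakes}.
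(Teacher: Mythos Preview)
Your approach is the same as the paper's: apply Theorem \ref{Theorima gia sxedon kalous pinakes} with $d_N=3$ and the partition of $[N]^2$ into the band $\{|i-j|\le pN\}$ and the two off-band triangles, with $s_1^{(N)}=1$ and $s_2^{(N)}=s_3^{(N)}=0$. Your verifications of Assumptions \ref{ConvergenceOnTrees}, \ref{Genikes ypotheseis}, \ref{assumfora.s.} and of items (a), (b) of Definition \ref{defngenvar} and of \eqref{extrassuconvex} are fine and match the paper.

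The gap is in your handling of condition (c). You correctly flag the columns $i\in\{1,N\}$ as exceptional, but the conclusion that the bound $M_N^{(2)}(k)\le N\cdot C^k$ ``still holds with $8d_N$ replaced by a fixed constant'' is not correct. With the paper's definition of internal point, \emph{every} lattice point $(1,j)$ with $j\le 1+pN$ lies in $\mathcal{B}_1^{(N)}\setminus(\mathcal{B}_1^{(N)})^\circ$, because its neighbour $(0,j)$ is outside $[N]^2$ and hence outside $\mathcal{B}_1^{(N)}$; thus the line $x=1$ meets that set in about $pN$ points, not $O(1)$. In the decomposition of the proof of Theorem \ref{Theorima gia sxedon kalous pinakes} this means $A_N^{(2)}$ carries the entire first and last rows of $A_N$, so $|A_N^{(2)}|_{\op}/\sqrt{N}$ is bounded below by $N^{-1/2}$ times the $\ell_2$-norm of row $1$, which is of order $\sqrt{p}$ and does not tend to $0$. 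Equivalently, for the path on three vertices the labelings with middle vertex equal to $1$ already contribute order $N^2$ to $M_N^{(2)}(2)$, defeating any bound of the form $N\cdot C^k$. The paper's own proof glosses over exactly this point, so the difficulty is shared; but you explicitly claim to have resolved it, and the argument you give does not. A genuine repair has to change the decomposition (for instance, keep the boundary rows and columns inside $A_N^{(1)}$ and re-verify Assumption \ref{Assumptions 1} for that modified matrix), not merely relax the constant.
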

            \begin{proof}
             The sequence $(A_N)_{N\in\N^+}$ satisfies Assumption \ref{ConvergenceOnTrees}, as we saw above, and also Assumption \ref{Genikes ypotheseis} because $\{a_{i, j}^{(N)}: N\in \N^+, i, j\in [N]\}$ are identically distributed and $a_{1, 1}$ has mean zero, variance one, and finite fourth moment. 
                The corollary then is a straightforward application of Theorem \ref{Theorima gia sxedon kalous pinakes}, where $d_N=3$, the partition of $[N]^2$ required by Definition \ref{defngenvar} consists of the sets
            \begin{align}
                \mathcal{B}^{(N)}_{1}&:=\{(i, j)\in [N]^{2}: |(i/N)-(j/N)|\leq p\}, \\ \mathcal{B}_2^{(N)}&:=\{(i, j)\in [N]^{2}: (i/N)>(j/N)+p \}, \\
                \mathcal{B}_3^{(N)}&:=\{(i, j)\in [N]^{2}: (j/N)>(i/N)+p \},
            \end{align}
                and $s_1^{(N)}=1, s_2^{(N)}= s_3^{(N)}=0$. Condition (b) of that definition is satisfied by $f:=m$ for each $m\in[3]$.
            \end{proof}
            \begin{rem}
                The random band matrix models have been extensively studied after the novel work in \cite{bogachev1991level} and have tremendous application in various research areas. When the bandwidth of the matrices is periodic, i.e., the distance from the diagonal outside which the entries are 0 is periodic, the operator norm has been extensively studied, see for example \cite{sodin2010spectral} or the survey \cite{bourgade2018random}. Moreover when the bandwidth of such matrices is non-periodic but the bandwidth (the maximum number of non identically zero entries per row) is $o(N)$ but also tends to infinity has also been examined in \cite{benaych2014largest}. To the best of our knowledge, the convergence of the largest eigenvalue of non-periodic Band Matrices with bandwidth proportional to the dimension has not been established.
            \end{rem}

Our next result concerns the singular values of triangular random matrices. It is well known under various assumptions for the entries, but we record it here as another application of our main theorem.        

Let $\{a_{i, j}^{(N)}: N\in \N^+, 1\le i\le j\in [N]\}$ identically distributed random variables, $\{a_{i, j}^{(N)}: 1\le j\le i\le N\}$ independent for each $N$, and $a_{1, 1}^{(1)}$ has mean 0 and variance 1. Let $A_N$ be the matrix with entries
                   \begin{equation} \label{TriangDefinition}
                       \{A_{N}\}_{i,j}=a^{(N)}_{i,j} 1_{i\le j}, \ \ \ i,j\in [N] ,
                \end{equation}

                   \begin{cor}[Triangular matrices]
Assume that for the matrix defined in \eqref{TriangDefinition} we have that $a_{1, 1}^{(N)}$ has $0$ mean, unit variance and finite $4+\delta$ moment for some $\delta>0$. Then
\[\lim_{N \to \infty}\frac{|A_{N}A^{T}_{N}|_{\op}}{N}= e \ \ \ \text{a.s.}\]
            \end{cor}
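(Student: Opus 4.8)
The plan is to reduce, via symmetrization, to the symmetric models already covered by Theorem~\ref{to theorima}, and then to identify the edge of the limiting spectral distribution with $e$. Let $\tilde A_N$ be the $2N\times 2N$ symmetrization of $A_N$, defined as in \eqref{AN_symmetrization}. Since $A_NA_N^T\succeq 0$, its operator norm is its largest eigenvalue, which equals the square of the largest singular value of $A_N$, so $|A_NA_N^T|_{\op}=|\tilde A_N|_{\op}^2$; hence it suffices to prove that $|\tilde A_N|_{\op}/\sqrt{2N}$ converges almost surely to some constant $c$, which then gives $|A_NA_N^T|_{\op}/N=2\,(|\tilde A_N|_{\op}/\sqrt{2N})^2\to 2c^2$. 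Writing the index set of $\tilde A_N$ as a first block $\{1,\dots,N\}$ and a second block $\{N+1,\dots,2N\}$, the triangular structure \eqref{TriangDefinition} makes the variance profile of $\tilde A_N$ equal to $1$ exactly on the two triangular regions $\mathcal T:=\{(p,q):p\in[N],\ q\in[N+1,2N],\ q\ge p+N\}$ and $R(\mathcal T)$, and equal to $0$ elsewhere; in particular $\tilde A_N=\tilde\Sigma_N\odot\tilde A'_N$ where $\tilde\Sigma_N\in\{0,1\}^{2N\times 2N}$ and $\tilde A'_N$ is a $2N\times 2N$ symmetric matrix with entries independent up to symmetry, each distributed as $a_{1,1}^{(1)}$.

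Next I would verify that $(\tilde A_N)_{N\in\N^+}$ (with the understanding, as in the Gram-matrix reduction, that the $N$-th term has dimension $2N$ and normalization $\sqrt{2N}$) satisfies the hypotheses of Theorem~\ref{to theorima}. Assumption~\ref{Genikes ypotheseis} holds because $a_{1,1}^{(1)}$ has mean $0$, variance $1$ and finite $4+\delta$ moment and all entries are identically distributed, so \eqref{MaxToZero} holds. Assumption~\ref{assumfora.s.} holds with $X=a_{1,1}^{(1)}$ thanks to the product structure $\tilde A_N=\tilde\Sigma_N\odot\tilde A'_N$ with $\tilde\Sigma_N$ having entries in $[0,1]$. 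For Assumption~\ref{ConvergenceOnTrees} I would use Remark~\ref{RemConvOnTrees}: the graphon $W_N$ attached to $\tilde A_N$ converges, off a finite union of line segments and hence almost everywhere on $[0,1]^2$, to the $\{0,1\}$-valued graphon supported on the two triangles obtained by rescaling $\mathcal T$ and $R(\mathcal T)$, so the ESD of $\tilde A_N/\sqrt{2N}$ converges almost surely to a symmetric, compactly supported measure; call $c$ the largest point of its support. Finally, Assumption~\ref{Assumptions 1} holds: if the variance at $(i,j)$ is $1$ then $(i,j)\in\mathcal T\cup R(\mathcal T)$, and a direct check shows that each of $(2i,2j)$, $(2i-1,2j)$, $(2i-1,2j-1)$ then lands in the corresponding triangle at size $4N$ — the defining inequality $q\ge p+N$ is restored after halving the two coordinates, and the $\pm1$ shifts are harmless since the inequality only constrains integers differing by multiples of $2$. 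Thus Theorem~\ref{to theorima} gives $|\tilde A_N|_{\op}/\sqrt{2N}\to c$ a.s.; consequently $|A_NA_N^T|_{\op}/N\to 2c^2=:\mu_\infty$ a.s., and by \eqref{ESDRelation} with $M=N$ the number $\mu_\infty$ is the largest point of the support of the limiting ESD of $A_NA_N^T/N$.

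It remains to show $\mu_\infty=e$. I would compute the moments of the limiting ESD $\nu$ of $A_NA_N^T/N$ by the trace method: $\int x^k\,d\nu(x)=\lim_{N\to\infty}N^{-(k+1)}\,\E\tr\big((A_NA_N^T)^k\big)$, expanding the expected trace over closed index walks of length $2k$. As in the classical Wigner and Marchenko--Pastur computations, only walks whose underlying graph is a tree on $k+1$ vertices carrying a non-crossing pairing of the steps contribute at order $N^{k+1}$; the new feature is that each step contributes an indicator $\mathbf 1_{i\le j}$ from \eqref{TriangDefinition}, which imposes on the $k+1$ distinct vertex labels a system of order relations dictated by the non-crossing structure. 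In the limit the contribution of each such structure becomes $N^{k+1}$ times the Lebesgue volume of the associated order polytope in $[0,1]^{k+1}$, and summing over all $|\mathbf{C}_k|$ structures yields $m_k:=k^k/(k+1)!$. By Stirling's formula $m_k^{1/k}=k/((k+1)!)^{1/k}\to e$, so $\mu_\infty=\lim_{k\to\infty}m_k^{1/k}=e$, which is the claim.

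The main obstacle is the last paragraph: the passage from $\E\tr((A_NA_N^T)^k)$ to the order polytopes and the evaluation of their total volume as $k^k/(k+1)!$ (equivalently, the identification of the spectral edge of the model with $e$). If one prefers not to carry out this combinatorial computation, one may instead use the first two paragraphs — which already reduce the statement to the convergence of $|A_NA_N^T|_{\op}/N$ to the top of the support of the (universal) limiting ESD — together with the known description of the limiting singular value distribution of upper triangular random matrices (the quasinilpotent DT operator of Dykema--Haagerup), whose squared operator norm is $e$.
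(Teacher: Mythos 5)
Your symmetrization step and the verification of Assumptions \ref{Genikes ypotheseis}, \ref{assumfora.s.}, and \ref{ConvergenceOnTrees} all match what the paper does. However, the claim that Assumption \ref{Assumptions 1} holds for $(\tilde A_N)_{N\geq1}$ is false, and this is a genuine gap in the argument. The variance of $(\tilde A_N)_{i,j}$ (with $i,j\in[2N]$) equals $1$ precisely when $|i-j|\geq N$. Assumption \ref{Assumptions 1} is \emph{asymmetric}: it requires $s_{i,j}^{(N)}\le s^{(2N)}_{2i-1,2j}$ but not $s_{i,j}^{(N)}\le s^{(2N)}_{2i,2j-1}$. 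On the lower triangle, take $i-j=N$ (say $N=2$, $(i,j)=(3,1)$): then $s_{i,j}=1$, but $(2i-1)-2j=2(i-j)-1=2N-1<2N$, so $s^{(2N)}_{2i-1,2j}=0$, and the inequality in \eqref{VarInequalities} fails. Your phrase ``the $\pm1$ shifts are harmless since the inequality only constrains integers differing by multiples of $2$'' is exactly where the error hides: the shift $(2i-1,2j)$ changes the index difference by an odd amount, and on the boundary $|i-j|=N$ that odd shift drops you below the threshold $2N$. So Theorem \ref{to theorima} cannot be applied directly.

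The paper avoids this precisely by invoking Theorem \ref{Theorima gia sxedon kalous pinakes} instead, with the partition of $[2N]^2$ into the band $\{|i-j|\le N-1\}$ and the two corner triangles $\{i\ge N+j\}$, $\{j\ge N+i\}$. That theorem only needs the weaker doubling condition \eqref{extrassuconvex}, $s_{i,j}^{(N)}\le s^{(2N)}_{2i,2j}$, which does hold since $|2i-2j|=2|i-j|$, and the boundary entries (where your check breaks) are relegated to the sparse matrix $A_N^{(2)}$, whose operator norm is negligible. Your second part --- identifying the edge with $e$ --- is essentially what the paper does too (the paper cites \cite{cheliotis2022singular}; you sketch the $k^k/(k+1)!$ moment computation and also offer the Dykema--Haagerup reference), so that part is fine, if left somewhat informal. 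The fix is therefore local: replace the appeal to Theorem \ref{to theorima}/Assumption \ref{Assumptions 1} with an appeal to Theorem \ref{Theorima gia sxedon kalous pinakes} and condition \eqref{extrassuconvex}, which is satisfied as an equality here.
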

 \begin{proof}
     As in the case of Gram matrices, we denote by $\tilde A_N$ the symmetrization of $A_N$, defined in \eqref{AN_symmetrization}. We have $|A_N A_N^T|_{\op}=|\tilde A_N|_{\op}^2$. We will apply Theorem \ref{Theorima gia sxedon kalous pinakes} to the sequence $(\tilde A_N)_{N\in\N^+}$. The partition of $[2N]^2$ required by Definition \ref{defngenvar} consists of the following three sets (i.e., $d_N=3$)
     \begin{align} 
     \mathcal{B}^{(N)}_1&:=\{(i, j)\in [2N]^2: |i-j|\le N-1\}, \\
     \mathcal{B}^{(N)}_2&:=\{(i, j)\in [2N]^2: i\ge N+j\}, \\
     \mathcal{B}^{(N)}_3&:=\{(i, j)\in [2N]^2: j\ge N+i\}, 
     \end{align}
 and the corresponding values of the variance are $s_1^{(N)}=0, s_2^{(N)}=s_3^{(N)}=1$. Assumption \ref{ConvergenceOnTrees} follows as an application of Remark \ref{RemConvOnTrees}, in the same way as in the previous corollary. The measure $\mu$ of that assumption satisfies $\mu\circ T^{-1}=\nu$, where $\nu$ is the limit of the E.S.D of $N^{-1}A_{N}A^{T}$ [recall \eqref{ESDRelation}]. It was shown in \cite{cheliotis2022singular} that $\nu$ has support $[0, e]$. It follows that $\mu$ has support $[-\sqrt{e}, \sqrt{e}]$ [See Remark 2.2 of \cite{bose2022xx} for a more detailed discussion of this phenomenon].

 Assumption \ref{Genikes ypotheseis} is satisfied because the elements of $\tilde A_N$  with indices in $\mathcal{B}^{(N)}_2\cup \mathcal{B}^{(N)}_3$ are identically distributed with zero mean, unit variance and finite fourth moment (the remaining elements of the matrix are identically zero random variables). Finally, condition \eqref{extrassuconvex} is satisfied as equality. 

 Thus, the corollary follows from Theorem \ref{Theorima gia sxedon kalous pinakes}.
 \end{proof}

\section{Two technical lemmas}
\label{CountLemma}

In the next lemma, we prove the crucial estimate we invoked in the proof of Proposition \ref{protasi gia megalo trace}. We adopt and present the terminology of Section 5.1.1 of \cite{bai2010spectral}.

\begin{lem} \label{CycleCountLemma} $N_{T, a_1, a_2, \ldots, a_s}\le (4k^4)^{4(s+1-t)+2(k-s)}$ if $t\ge2$ and 
$N_{T, a_1, a_2, \ldots, a_s}=1$ if $t=1$.
\end{lem}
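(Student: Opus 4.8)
The plan is to reconstruct the cycle $\mathbf{i}=(i_1,\dots,i_{2k})$ move by move and to bound the number of choices made along the way, following the walk-counting arguments of \cite[\S 5.1.1]{bai2010spectral} and \cite[\S 2.3]{tao2012topics}. We may assume $a_1,\dots,a_s\ge 2$ (otherwise the corresponding bad cycles contribute $0$ to $B_N(k)$, which is all that is used downstream), so that $\sum_q a_q=2k$ forces $s\le k$. The case $t=1$ is immediate: $G(\mathbf{i})$ then has a single vertex, which by the normalization (the distinct vertices are $1,\dots,t$ in their order of appearance in the cycle) must be $1$, so $\mathbf{i}=(1,\dots,1)$ is the only possibility and $N_{T,a_1,\dots,a_s}=1$.

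Assume now $t\ge 2$. I would scan the moves $i_l\to i_{l+1}$, $l=1,\dots,2k$ (with $i_{2k+1}:=i_1$), maintaining the current vertex, the set of vertices already visited --- always an initial segment $\{1,\dots,m\}$ of $[t]$, by the normalization --- and, for each edge discovered so far, how many times it has been traversed. Starting from $i_1=1$, each move is of exactly one of three kinds: \emph{(I) an innovation}, where $i_{l+1}$ is a fresh vertex (necessarily $m+1$), of which there are exactly $t-1$; \emph{(II) a surplus edge}, where $i_{l+1}$ is an old vertex but $\{i_l,i_{l+1}\}$ is traversed for the first time, of which there are exactly $s-(t-1)=s+1-t$; and \emph{(III) a repetition}, where $\{i_l,i_{l+1}\}$ has been traversed before, of which there are exactly $2k-s$.

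The facts I would establish are: (i) an innovation move is \emph{forced}, because $T(\mathbf{i})\sim T$ and the labeling dictate that the fresh vertex is $m+1$ and that the new edge must join it to its parent in $T$ (already placed), so the move is admissible only when $i_l$ equals that parent, whereupon it is uniquely determined; (ii) a surplus move contributes at most $t\le k$ choices, namely the selection of the old target $i_{l+1}$; (iii) a repetition move is forced except at a controlled number of steps, because on the tree edges the cycle performs the unique depth-first traversal prescribed by the plane structure of $T$, departing from it only at ``exceptional'' steps. To turn (iii) into a bound I would fix a canonical default move at each step (advance along the next not-yet-explored tree edge out of the current vertex in plane order if one exists, otherwise retreat toward the root), show that the cycle departs from the default $O(s+1-t)$ times on account of the surplus edges and $O(k-s)$ times on account of its length $2k$ exceeding the length $2(t-1)$ of the minimal tree traversal, and record the actual move ($\le 2k$ options) at each departure. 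Collecting the factors gives at most $(4k^4)^{4(s+1-t)}$ choices chargeable to the surplus edges and their repetitions and at most $(4k^4)^{2(k-s)}$ choices chargeable to the excess repetitions, which is the asserted bound.

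The principal obstacle is step (iii): pinning down exactly which repetition moves are non-deterministic, bounding their number by $2(k-s)$ (and the surplus-induced freedom by $4(s+1-t)$), and verifying that each free move costs at most one power of $4k^4$. This is the delicate combinatorial heart of the lemma, and it is where the terminology of \cite[\S 5.1.1]{bai2010spectral} --- the classification of the steps and edges of the graph attached to the chain and the tracking of its ``characteristic'' vertices --- does the real work; the remainder is bookkeeping.
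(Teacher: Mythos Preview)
Your plan is the right one and matches the paper's approach: both implement the Bai--Silverstein walk-counting of \cite[\S5.1.1]{bai2010spectral}. The $t=1$ case is handled identically, and your classification (I)/(II)/(III) lines up with the paper's $T_1$/$T_2$/(rest), with the correct counts $t-1$, $s+1-t$, $2k-s$.

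The gap is exactly where you flag it, and it is a real one rather than mere bookkeeping. Your default-move heuristic is morally right, but to extract the precise exponent $4(s+1-t)+2(k-s)$ you need a finer subdivision of the repetition moves than you give, and you need to explain how the reconstruction knows, at a given step, whether it is looking at an innovation or a repetition --- your claim that innovations are ``forced'' presupposes you already know the step is an innovation. The paper resolves both issues as follows. It splits the repetitions into $T_3$ legs (the first repetition of each tree edge, $t-1$ of them) and the remaining $T_4\setminus T_2$ legs, and further splits $T_3$ into \emph{regular} and \emph{irregular}. It then records, for each $T_4$ leg, its position and endpoints \emph{together with} the target of the following leg whenever that following leg is $T_1$ --- this extra coordinate is precisely what allows the reconstruction to distinguish a $T_1$ leg from an irregular $T_3$ leg on the fly --- and records position and endpoints of every regular $T_3$ leg. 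Two numerical facts then do the work: $|T_4|=2m+2(k-s)$ with $m=s+1-t$ the number of surplus edges, and the Bai--Silverstein inequality (Lemma 5.6 there) that the number $r$ of regular $T_3$ legs satisfies $r\le 2m$. This yields $r+|T_4|\le 4(s+1-t)+2(k-s)$ recorded legs, each costing at most $2kt^2(t+1)\le 4k^4$ choices. Your sketch would need exactly these ingredients; asserting ``$O(s+1-t)$ departures from surplus edges and $O(k-s)$ from excess length'' does not by itself produce the constants $4$ and $2$, nor does it supply the mechanism for disambiguating innovations from non-default repetitions during reconstruction.
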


\begin{proof}
When $t=1$, since the cycle is bad, we have $s=1$ and $a_1=2k$, and there is only one cycle with these $s, t$ and vertex set $\{1\}$.

For the case $t\ge2$, take a cycle $\B{i}:=(i_1, i_2, \ldots, i_{2k})$ as in \eqref{SpecBadCycles} and assume that it has edge multiplicities $a_1, a_2, \ldots, a_s\ge 2$.
Each step in the cycle we call a \textit{leg}. More formally, legs are the elements of the set $\{(r, (i_r, i_{r+1})): r=1, 2,\ldots, 2k\}$. Edges of the cycle we call the edges of $G(\B{i})$, and the multiplicity of each edge is computed from $\B{i}$. The graph $G(\B{i})$ does not have multiple edges.

For $1\le a<b$, we say that the leg $(a, (i_a, i_{a+1}))$ is \textit{single} up to $b$ if $\{i_a, i_{a+1}\}\ne \{i_c, i_{c+1}\}$ for every $c\in\{1, 2, \ldots, b-1\}, c\ne a$. We classify the $2k$ legs of the cycle into 4 sets $T_1, T_2, T_3, T_4$. The leg $(a, (i_a, i_{a+1}))$  belongs to 

\smallskip

$T_1$: if $i_{a+1}\notin \{i_1, \ldots, i_a\}$. I. e., the leg leads to a new vertex.

\smallskip

$T_3$: if there is a $T_1$ leg $(b, (i_b, i_{b+1}))$ with $b<a$ so that $a=\min\{c>b: \{i_c, i_{c+1}\}=\{i_b, i_{b+1}\}\}$. I. e., at the time of its appearance, it increases the multiplicity of a $T_1$ edge of $G(\B{i})$ from 1 to 2.

\smallskip

$T_4$: if it is not $T_1$ or $T_3$.

\smallskip

$T_2$: if it is $T_4$ and there is no $b<a$ with $\{i_a, i_{a+1}\}=\{i_b, i_{b+1}\}$. \\
\phantom{That i} I.e., at the time of its appearance, it creates a new edge but leads to a vertex that has \\ \phantom{That i} appeared already.

\smallskip

\noindent Moreover, a $T_3$ leg $(a, (i_a, i_{a+1}))$ is called \textit{irregular} if there is exactly one $T_1$ leg $(b, (i_b, i_{b+1}))$ which has $b<a$, $v_a\in\{i_b, i_{b+1}\}$, and is single up to $a$. Otherwise the leg is called \textit{regular}. \\
It is immediate that a $T_4$ leg is one of the following three kinds.

\smallskip

a) It is a $T_2$ leg.

\smallskip

b) Its appearance increases the multiplicity of a $T_2$ edge from 1 to 2.

\smallskip

c) Its edge marks the third or higher order appearance of an edge. 

\smallskip

\noindent The number of edges of  $G(\B{i})$ is $s$ and the number of its vertices is $t$ (since $T(\B{i})\sim T\in \B{C}_{t-1}$). Call 

\smallskip

$\ell$: the number of edges of  $G(\B{i})$ that have multiplicity at least 3. 

\smallskip

$m$: the number of $T_2$ legs.

\smallskip

$r$: the number of regular $T_3$ legs.

\smallskip

\noindent We have for $r, t,$ and $|T_4|$ the following bounds
\begin{align} r&\le 2m, \label{rBound}\\
t&=s+1-m\le k, \label{tBound}\\
|T_4| &=2m+2(k-s). \label{T4Bound}
\end{align}
The first relation is Lemma 5.6 in \cite{bai2010spectral}. The second is true because if we remove the $m$ edges traveled by $T_2$ legs, we get a tree with $s-m$ edges and $t$ vertices, and in any tree the number or vertices equals the number of edges plus one. Then the inequality is true because $s\le k$ (all edges of  $G(\B{i})$  have multiplicity at least 2) and if $s=k$, then $m\ge1$ since the cycle is bad. For the last relation, note that $|T_3|=|T_1|=t-1$ and thus, using \eqref{tBound} too, we have
$|T_4|=2k-2(t-1)=2k-2(s-m).$ \\
Now back to the task of bounding $N_{T, a_1, \ldots, a_s}$.
We fix a cycle as in the beginning of the proof and we record
\begin{itemize}
\item for each $T_4$ leg, a) its order in  the cycle, b) the index of its initial vertex, c) the index of its final vertex, and d) the index of the final vertex of the next leg in case that leg is $T_1$. This gives a $Q_1\subset \{1, 2, \ldots, 2k\}\times (\{1, 2, \ldots t\}^2\cup \{1, 2, \ldots t\}^3)$ with $|T_4|$ elements.

\item for each regular $T_3$ leg, a) its order in the cycle, b) the index of its initial vertex, and c) the index of its final vertex. This gives a $Q_2\subset \{1, 2, \ldots, 2k\}\times \{1, 2, \ldots, t\}^2$ with $r$ elements.
\end{itemize}
We call $U$ the set of all indices that appear as fourth coordinate in elements of $Q_1$.  These are indices of final vertices of $T_1$ legs.\\
We claim that, having $Q_1, Q_2$ and knowing that $T(\B{i})=T$, we can reconstruct the cycle \B{i}.

We determine what kind each leg of the cycle is and what the index of its initial and its final vertex is. These data are known for the $T_4$ and $T_3$ regular legs. The remaining legs are $T_1$ or $T_3$ irregular. We discover the nature of each of them by traversing the cycle from the beginning as follows. The first leg is $T_4$ (if $i_2=i_1$) or $T_1$. The set $Q_1$ will tell us if we are in the first case and will give us all we want. If we are in the second case, the initial vertex is 1 and the final 2. Assume that we have arrived at a vertex $v_i$ in the cycle with the smallest $i$ for which the nature of the leg $\ell_i:=(i, (v_i, v_{i+1}))$ is not known yet. If the vertex $v_i$ has no neighbors in  $G(\B{i})$ that we haven't encountered up to the leg $\ell_{i-1}$, then $\ell_i$ is $T_3$ irregular, and by the defining property of $T_3$ irregular legs, we can determine the index of its final vertex. If the vertex $v_i$ does have such neighbors, call $z$ the one that appears earlier in the cycle. \\
\begin{figure}[ht] 
  \centering
  \includegraphics[width=14em]{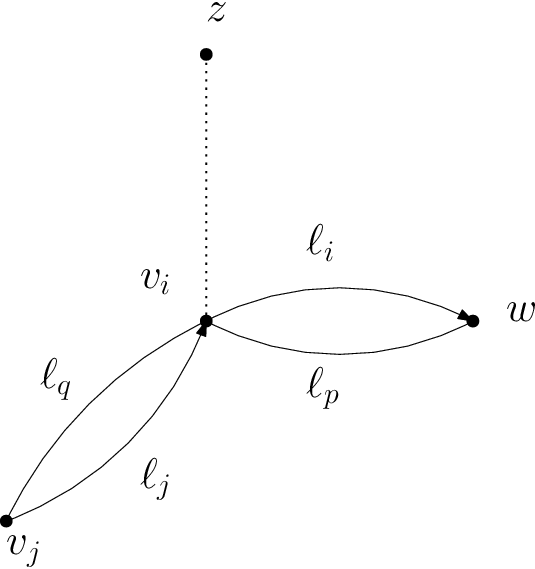} 
  \caption{The case $z\notin U$.The legs $\ell_i, \ell_j (i<j)$ are $T_3$, while $\ell_p, \ell_q$ are $T_1$.}\label{graphCase}
\end{figure}
$\bullet$ If $z\in U$, then in case it was included in $U$ because of $\ell_{i-1}$ (this can be read off from $Q_1$. Note that $z$ could not have been included because of an earlier leg because $z$ has not appeared earlier than $v_i$), we have that $\ell_i$ is $T_1$ with $v_{i+1}=z$, while in case it was included with a leg $\ell_{i'}$ with index $i'\ge i$, we have that $\ell_i$ can't be $T_1$ (because then $v_{i+1}$ would be a neighbor of $v_i$ appearing earlier than $z$, contradicting the choice of $z$), thus $\ell_i$ is $T_3$ irregular.

$\bullet$ If $z\notin U$, we will show that $\ell_i=(i, (v_i, w))$ is $T_1$. Assume on the contrary that it is $T_3$ irregular. Clearly $z\ne w$, and call $\ell_p$ ($p<i$) the $T_1$ leg that has vertices $v_i, w$ and is single up to $i-1$. The cycle will visit the vertex $v_i$ at a later point, with a leg $\ell_j=(j, (v_j, v_i))$ with $j>i$ and $v_j\ne z, v_j\ne v_i$
, in order to create the edge that connects $v_i$ with $z$ (that is, $\ell_{j+1}=(j+1, (v_i, z))$ will be $T_1$), see Figure \ref{graphCase}. The leg $\ell_j$ is not $T_1$ because $v_i$ has been visited by an earlier leg, and it is not $T_4$ because we assumed that $z\notin U$. 
It has then to be $T_3$. Thus, there is a leg $\ell_q$ connecting vertices $v_i, v_j$ that is $T_1$. 

If $q<i$, then we consider two cases. If $v_j=w$, then $\ell_j$ is $T_4$, because the edge $v_i, w$ has been traveled already by $\ell_p, \ell_i$ (recall that $p<i<j$), and this would force $z\in U$, a contradiction. If $v_j\ne w$, then $\ell_i$ would have been $T_3$ regular as there are at least two $T_1$ legs (i.e., $\ell_p, \ell_q$) with order less than $i$ with one vertex $v_i$, traveling different edges, and single up to $i-1$, again a contradiction because $\ell_i$ is $T_1$ or $T_3$ irregular.

 If $q>i$, then $v_j (\ne z)$ is a neighbor of $v_i$ (that is, the $T_1$ leg $\ell_q$ goes from $v_i$ to $v_j$) that appears after leg $\ell_i$ but earlier than $z$, which contradicts the definition of $z$. We conclude that $\ell_i$ is $T_1$.

Thus, having $T, Q_1, Q_2$ allows to determine $\B{i}$.

\noindent The above imply that the number of bad cycles with given $T, t, r$ is at most
\begin{equation}
 (2kt^2(t+1))^{|T_4|} (2k t^2)^r \le (4k^4)^{r+|T_4|}. \label{MapCount}
\end{equation}
Then \eqref{rBound} and \eqref{T4Bound} give 
$r+|T_4|\le 4m+2(k-s)$, and finally using \eqref{tBound}, we get the desired bound. 
\end{proof}

The next lemma is used in the proof of Theorem \ref{Theorima gia sxedon kalous pinakes}.

\begin{lem} \label{BreakLemma}
    Let $(A_N)_{N\in\N^+}$ be a sequence of matrices, $A_N$ of dimension $N\times N$,  that satisfies Assumption \ref{BasicAssum} and Assumption \ref{ConvergenceOnTrees} with measure $\mu$. Suppose that there are two sequences of matrices $(A^{(1)}_N)_{N\in\N^+}$ and $(A^{(2)}_N)_{N\in \N^+}$ such that
    \begin{itemize}
        \item[(a)] $A_{N}=A^{(1)}_N+A_N^{(2)}$, 
        \item[(b)] For all $N\in\N^+$ and $i, j\in[N]$, at least one of $(A^{(1)}_N)_{i, j}, (A^{(2)}_N)_{i, j}$ is identically zero random variable. 
        \item[(c)] $\mu_{A_N^{(1)}/\sqrt{N}}\Rightarrow \mu \text{ in probability as } N\to\infty.$
    \end{itemize}
    Then $(A_N^{(1)})_{N\in\N^+}$ also satisfies Assumptions \ref{BasicAssum}, \ref{ConvergenceOnTrees} with the measure $\mu$.
\end{lem}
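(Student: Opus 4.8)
The plan is to verify, for $(A^{(1)}_N)_{N\in\N^+}$, Assumption \ref{BasicAssum} (routine) and Assumption \ref{ConvergenceOnTrees} with the measure $\mu$ (the substantive part). The key elementary remark, from (a) and (b), is that for every $N$ and all $i,j\in[N]$ the entry $(A^{(1)}_N)_{i,j}$ is \emph{either} identically $0$ \emph{or} equal to $(A_N)_{i,j}$ (in the second case because then $(A^{(2)}_N)_{i,j}\equiv0$ and $A_N=A^{(1)}_N+A^{(2)}_N$). Hence $|(A^{(1)}_N)_{i,j}|\le|(A_N)_{i,j}|$ almost surely, and the variance of $(A^{(1)}_N)_{i,j}$ is $\le s_{i,j}^{(N)}$. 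From this, Assumption \ref{BasicAssum} for $A^{(1)}_N$ is immediate: the entries have mean $0$ and uniformly bounded second moments, they are independent up to symmetry (each is a deterministic function of the corresponding entry of $A_N$), and the sum in Assumption \ref{BasicAssum}(2) for $A^{(1)}_N$ is dominated term by term by the one for $A_N$. The same domination applied to \eqref{orismos kalon orwn} for $A^{(1)}_N$, which we denote $M^{(1)}_N(k)$, gives $M^{(1)}_N(k)\le M_N(k)$, so
\[\limsup_{N\to\infty}\frac{M^{(1)}_N(k)}{N^{k+1}}\le m_{2k}:=\int x^{2k}\,d\mu(x)\qquad(k\in\N).\]

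The work is the reverse inequality $\liminf_{N\to\infty}M^{(1)}_N(k)/N^{k+1}\ge m_{2k}$ for each fixed $k\ge1$ (for $k=0$, $M^{(1)}_N(0)=N$ and there is nothing to prove), and this is where hypothesis (c) is used. Using Assumption \ref{BasicAssum}(2), fix a deterministic $\delta_N\downarrow0$ decreasing slowly enough that $N^{-2}\sum_{i,j}\E\{|(A_N)_{i,j}|^2\mathbf 1_{|(A_N)_{i,j}|>\delta_N\sqrt N}\}\to0$, and let $\hat A_N$ be obtained from $A^{(1)}_N$ by truncating each entry at level $\delta_N\sqrt N$ and re-centering (as in part 3 of the proof of Proposition \ref{PropSRCondition}). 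By the above domination, $N^{-2}\sum_{i,j}\E|(A^{(1)}_N-\hat A_N)_{i,j}|^2\to0$, so by Corollary A.41 of \cite{bai2010spectral}, $L^3(\mu_{A^{(1)}_N/\sqrt N},\mu_{\hat A_N/\sqrt N})\to0$ in probability; together with (c) this yields $\mu_{\hat A_N/\sqrt N}\Rightarrow\mu$ in probability. Because $\hat A_N$ has bounded entries, \eqref{xoristo athrisma} applies to it: $\E\tr(\hat A_N^{2k})=\hat M_N(k)+\hat B_N(k)$, where $\hat M_N(k)$ is \eqref{orismos kalon orwn} for the variance profile of $\hat A_N$. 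Truncating and re-centering only shrinks variances, so $\hat M_N(k)\le M^{(1)}_N(k)$; and since the entries of $\hat A_N$ are $o(\sqrt N)$ and $k$ is fixed, the bad-cycle estimate behind Proposition \ref{protasi gia megalo trace} (which at fixed $k$ requires only that the entries are $o(\sqrt N)$; cf.\ Section 2.3 of \cite{tao2012topics}) gives $\hat B_N(k)=o(N^{k+1})$.

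To conclude, fix $L>0$ and a continuous $\chi_L$ with $\mathbf 1_{[-L,L]}\le\chi_L\le\mathbf 1_{[-L-1,L+1]}$. Since $x\mapsto x^{2k}\chi_L(x)$ is bounded and continuous, $\mu_{\hat A_N/\sqrt N}\Rightarrow\mu$ in probability gives $\int x^{2k}\chi_L\,d\mu_{\hat A_N/\sqrt N}\to\int x^{2k}\chi_L\,d\mu$ in probability, hence in expectation; and since $x^{2k}\ge0$ and $\chi_L\le1$,
\[\frac1N\E\tr((N^{-1/2}\hat A_N)^{2k})=\E\!\int x^{2k}\,d\mu_{\hat A_N/\sqrt N}\ \ge\ \E\!\int x^{2k}\chi_L\,d\mu_{\hat A_N/\sqrt N}\ \longrightarrow\ \int x^{2k}\chi_L\,d\mu .\]
Thus $\liminf_N N^{-1}\E\tr((N^{-1/2}\hat A_N)^{2k})\ge\int x^{2k}\chi_L\,d\mu$, and letting $L\to\infty$ the right side increases to $m_{2k}$. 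On the other hand $N^{-1}\E\tr((N^{-1/2}\hat A_N)^{2k})=N^{-(k+1)}\E\tr(\hat A_N^{2k})=\hat M_N(k)/N^{k+1}+o(1)\le M^{(1)}_N(k)/N^{k+1}+o(1)$. Hence $\liminf_N M^{(1)}_N(k)/N^{k+1}\ge m_{2k}$, which with the $\limsup$ bound yields $\lim_N M^{(1)}_N(k)/N^{k+1}=\int x^{2k}\,d\mu$ for every $k$ — exactly Assumption \ref{ConvergenceOnTrees} for $(A^{(1)}_N)_{N\in\N^+}$ with the measure $\mu$.

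I expect the main difficulty to be precisely this lower bound: one cannot pass directly from weak convergence of $\mu_{\hat A_N/\sqrt N}$ to convergence of its empirical $2k$-th moments, since $\hat A_N$ may still carry a little spectral mass out at scale $N\delta_N$; the one-sided cutoff $\chi_L$ bypasses this and is harmless because $\mu$ is compactly supported. A secondary point is to choose the truncation level so that simultaneously the entries of $\hat A_N$ are $o(\sqrt N)$ (making $\hat B_N(k)$ negligible, at fixed $k$) and the limiting E.S.D.\ is unchanged — compatible precisely because, at fixed $k$, the bad-cycle bound does not need the stronger $O(N^{1/2-\epsilon})$ control required for the uniform-in-$k$ estimate of Proposition \ref{protasi gia megalo trace}.
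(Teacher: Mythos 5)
Your proof is correct and follows essentially the same route as the paper's: truncate and re-center $A_N^{(1)}$, observe via a Levy-distance estimate that the limiting E.S.D.\ is unchanged, expand $\E\tr$ of the truncated matrix into the tree-sum $\hat M_N(k)$ plus a bad-cycle remainder that is $o(N^{k+1})$ at fixed $k$, and pass from weak convergence of the E.S.D.\ to convergence of the normalized $2k$-th trace moment. The one genuine difference is in the final step: the paper truncates the test function symmetrically via $g_C(x)=(|x|\wedge C)^{2k}$ and then controls the discrepancy $\E\{\lambda_i^{2k}\mathbf 1_{|\lambda_i|\ge C}\}$ with a double Cauchy--Schwarz, which requires showing $N^{-1}\sum_i\E\lambda_i^{4k}$ stays bounded (itself obtained from the domination $M^{(1),\le}_N(2k)\le M_N(2k)$); you instead split the limit into a $\limsup$ and a $\liminf$, get the $\limsup$ directly from $M^{(1)}_N(k)\le M_N(k)$, and get the $\liminf$ from the one-sided inequality $x^{2k}\ge x^{2k}\chi_L(x)$ with the bounded continuous cutoff $\chi_L$ and $L\to\infty$. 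This is a modest simplification: it sidesteps the $4k$-th moment boundedness argument entirely, at the cost of invoking the domination of $M_N^{(1)}$ by $M_N$ one extra time. Both closings are sound, and your version is arguably cleaner.
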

\begin{proof} We only need to check the validity of Assumption \ref{ConvergenceOnTrees} as the validity of Assumption \ref{BasicAssum} is immediate. \\
Because $A_N^{(1)}$ satisfies Assumption \ref{BasicAssum}, there is a decreasing sequence $(\eta_N)_{N\in\N^+}$ of positive reals converging to 0 so that
\begin{equation} \label{TruncProperty}
 \lim_{N\to\infty} \frac{1}{N^2}\sum_{i, j\in [N]}   \E \left[ (\{A_N^{(1)}\}_{i,j})^2 \mathbf{1}\left( |\{A_{N}^{(1)}\}_{i,j}|>\eta_N N^{\frac{1}{2}}  \right)\right]=0.  
\end{equation}
Set $A_N^{(1), \leq }$ to be the matrix whose $(i, j)$ entry is 
    \begin{align}
    \{A_N^{(1)}\}_{i,j} \mathbf{1}\left( |\{A_{N}^{(1)}\}_{i,j}| \leq  \eta_N N^{\frac{1}{2}} \right)- \E \left[ \{A_N^{(1)}\}_{i,j} \mathbf{1}\left( |\{A_{N}^{(1)}\}_{i,j}|  \leq  \eta_N N^{\frac{1}{2}}  \right)\right]
\end{align}
and $\mu_{N, i, j}:=\E \left[ \{A_N^{(1)}\}_{i,j} \mathbf{1}\left( |\{A_{N}^{(1)}\}_{i,j}|  \leq  \eta_N N^{\frac{1}{2}}  \right)\right]$. 

\textsc{Claim}:
\begin{equation}
   \mu_{N^{-1/2} A_{N}^{(1), \le}} \Rightarrow \mu \quad \text{ in probability as } N\to\infty \label{TrCentConvergence}
\end{equation}

The Levy distance between $\mu_{A_N^{(1)}/\sqrt{N}}$ and $\mu_{A_N^{(1), \le}/\sqrt{N}}$ is bounded as follows.
\begin{align}
L^3(\mu_{A_N^{(1)}/\sqrt{N}}, \mu_{A_N^{(1), \le}/\sqrt{N}}) & \le \frac{1}{N}\text{tr}\bigg\{\bigg(\frac{1}{\sqrt{N}} A_N^{(1)}-\frac{1}{\sqrt{N}} A_N^{(1), \le}\bigg)^2\bigg\}\\&=\frac{1}{N^2}\sum_{i, j\in [N]} \{\mu_{N, i, j}^2 \mathbf{1}\left( |\{A_{N}^{(1)}\}_{i,j}| \leq  \eta_N N^{\frac{1}{2}} \right)+(\{A_N^{(1)}\}_{i, j}+\mu_{N, i, j})^2 \mathbf{1}\left( |\{A_{N}^{(1)}\}_{i,j}|>\eta_N N^{\frac{1}{2}} \right)\} \\& \le \frac{3}{N^2}\sum_{i, j\in [N]} \mu_{N, i, j}^2 +\frac{2}{N^2} \sum_{i, j\in[N]} (\{A_N^{(1)}\}_{i, j})^2 \mathbf{1}\left( |\{A_{N}^{(1)}\}_{i,j}|>\eta_N N^{\frac{1}{2}} \right). \label{LevyDistBound}
\end{align}

Since the entries of $A_N^{(1)}$ have mean 0, we have
$$\mu_{N, i, j}^2=\left(\E \left[ \{A_N^{(1)}\}_{i,j} \mathbf{1}\left( |\{A_{N}^{(1)}\}_{i,j}|>\eta_N N^{\frac{1}{2}}  \right)\right]\right)^2\le \E\left[  (\{A_N^{(1)}\}_{i, j})^2 \mathbf{1}\left( |\{A_{N}^{(1)}\}_{i,j}|>\eta_N N^{\frac{1}{2}} \right) \right].$$
Thus, the expectation of the expression in \eqref{LevyDistBound} is at most
$$\frac{5}{N^2} \sum_{i, j\in[N]} \E \bigg\{(\{A_N^{(1)}\}_{i, j})^2 \mathbf{1}\left( |\{A_{N}^{(1)}\}_{i,j}|>\eta_N N^{\frac{1}{2}} \right) \bigg\},$$
which tends to zero as $N\to\infty$ due to \eqref{TruncProperty}. This, combined with assumption (c), proves the claim.

Fix $k\in\N^+$ and set $M_N(k), M^{(1), \leq}_{N}(k)$ the asymptotic contributing terms (see \eqref{orismos kalon orwn}) of $A_N$ and $A^{(1),\leq}_N$ respectively. Notice that
\begin{equation} \label{MIneq}
M^{(1), \leq}_N(k)\leq M^{(1)}_{N}(k)\leq M_N(k).
\end{equation}
The rightmost inequality is true because the variance of $\{A_N^{(1)}\}_{i, j}$ is either zero or $s_{i, j}^{(N)}$ due to assumption (b) of the lemma. The leftmost inequality is true because if $W$ is a real valued random variable with mean 0 and finite variance and $\tilde W$ is a variable with $|\tilde W|\le |W|$, then $\text{Var}(\tilde W)\le \text{Var}(W)$.

Lemma 3.6 of \cite{zhu2020graphon} implies that 
\begin{align}\label{ineqformomentstruncated}
    \frac{M^{(1), \leq}_{N}(k)}{N^{k+1}}=\frac{1}{N^{k+1}} \E \tr \{(A^{(1), \leq}_N)^{2k}\}+o(1) 
\end{align}
as $N\to\infty$. We will prove that the right hand side converges to $\int  x^{2k} d\mu$ as $N\to\infty$. It will be convenient to let $B_N:=A_N^{(1), \le }/\sqrt{N}$ and $\{\lambda_i(B_N): i\in [N]\}$ its eigenvalues.

Pick some $C>\mu_\infty$ and consider the function $g_C(x)=(|x|\wedge C)^{2k}$, which is bounded and continuous. Then, 
\begin{equation} \label{traceAndEv}
\frac{1}{N^{k+1}} \E \tr \{(A^{(1), \leq}_N)^{2k}\}=\frac{1}{N}\sum_{i=1}^{N}\E \{(\lambda_i\{B_N\})^{2k}\}
\end{equation}
and the right hand side can be estimated as follows.
    \begin{equation}\label{divergBound} 
    \begin{aligned}
    &\left|\frac{1}{N}\sum_{i=1}^{N}\E \{(\lambda_i\{B_N\})^{2k}\}-\frac{1}{N}\sum_{i=1}^{N} \E g_C(\lambda_{i}\{B_N\})\right| \leq  \frac{1}{N}\sum_{i=1}^N \E\{\lambda^{2k}_{i}(B_N)\mathbf{1}_{|\lambda_{i}(B_N)|\geq C}\}\\
       & \leq \frac{1}{N}\sum_{i=1}^{N} \sqrt{\E \lambda^{4k}_i(B_N)}\sqrt{\mathbb{P}(|\lambda_i(B_N)|\geq C)} \leq \sqrt{\sum_{i=1}^{N}\frac{\E \lambda_i^{4k}(B_{N})}{N}} \sqrt{\ \frac {\E\sum_{i=1}^{N}\textbf{1}_{ |\lambda_{i}(B_N)|\geq C}}{N}}\overset{N\to\infty}{\to} 0.
    \end{aligned}\end{equation}
    To justify the convergence to zero, note that the quantity in the second square root converges to zero by our choice of $C>\mu_{\infty}$ and the in probability weak convergence of the E.S.D. of $B_N$ to $\mu$. The quantity in the first square root is bounded in $N$ because, due to \eqref{ineqformomentstruncated}, its difference from $M_N^{(1), \le}(2k)/N^{2k+1}$ is bounded and the latter is less than $M_N(2k)/N^{2k+1}$ which is bounded in $N$ since it converges to $\int x^{4k}\, d\mu$.

    The in probability weak convergence \eqref{TrCentConvergence} implies that 
    \begin{align}
        \frac{1}{N}\sum_{i=1}^{N} g_C(\lambda_{i}\{B_N\}) \to \int x^{2k}d\mu \text{ in probability, }
    \end{align}
    and the boundedness of $g_C$ allows to conclude that 
    \begin{align} \label{WCApplication}
          \lim_{N\to\infty} \frac{1}{N}\sum_{i=1}^{N} \E g_C(\lambda_i\{B_N\})=\int x^{2k}d\mu. 
    \end{align}
     Thus, relations \eqref{ineqformomentstruncated}, \eqref{traceAndEv}, \eqref{divergBound},\eqref{WCApplication} show that 
    \begin{equation}
         \lim_{N\to\infty} \frac{M^{(1), \leq}_{N}(k)}{N^{k+1}}=\int x^{2k}d\mu. 
    \end{equation}
And this combined with \eqref{MIneq} concludes the proof.
\end{proof}

\nocite{*} 
\bibliographystyle{abbrv}

\bibliography{references}

\end{document}